\newtheorem{thm*}{Theorem}
\newtheorem{thm}{Theorem}[section]
\newtheorem{dfn}{Definition}[section]
\newtheorem{exam}{Example}[section]
\newtheorem{lemma}{Lemma}[section]
\newtheorem{remark}{Remark}[section]
\newtheorem{cor}{Corollary}[section]
\begin{document}

\def\d{ \partial } 
\def\Na{{\mathbb{N}}}

\def\Z{{\mathbb{Z}}}

\def\IR{{\mathbb{R}}}

\def\L{ {\mathcal{L}}}

\newcommand{\E}[0]{ \varepsilon}

\newcommand{\s}[0]{ \mathcal{S}}

\newcommand{\AO}[1]{\| #1 \| }

\newcommand{\BO}[2]{ \left( #1 , #2 \right) }

\newcommand{\CO}[2]{ \left\langle #1 , #2 \right\rangle} 

\newcommand{\R}[0]{ \IR\cup \{\infty \} } 

\newcommand{\co}[1]{ #1^{\prime}} 

\newcommand{\p}[0]{ p^{\prime}} 

\newcommand{\m}[1]{   \mathcal{ #1 }}

%  norm of H
\newcommand{ \A}[1]{ \left\| #1 \right\|_H }

% inner product H 
\newcommand{\B}[2]{ \left( #1 , #2 \right)_H }

% H^* , H pairing
\newcommand{\C}[2]{ \left\langle #1 , #2 \right\rangle_{  H^* , H } }

 \newcommand{\HON}[1]{ \| #1 \|_{ H^1} }

% Omega    \Om
\newcommand{ \Om }{ \Omega}

% \partial Omega      \pOm 
\newcommand{ \pOm}{\partial \Omega}

%   D(\Omega)   \D
\newcommand{\D}{ \mathcal{D} \left( \Omega \right)} 

\newcommand{\Ov}{ \overline{ \Omega}}

% D'( Omega)        \DP 
\newcommand{\DP}{ \mathcal{D}^{\prime} \left( \Omega \right)  }

% D' pairing
\newcommand{\DPP}[2]{   \left\langle #1 , #2 \right\rangle_{  \mathcal{D}^{\prime}, \mathcal{D} }}

% (H^1)^* , H^1    (( pairing ))      \PHH
\newcommand{\PHH}[2]{    \left\langle #1 , #2 \right\rangle_{    \left(H^1 \right)^*  ,  H^1   }    }

%  H^{-1} , H_0^1  (( pairing ))   \PHO
\newcommand{\PHO}[2]{  \left\langle #1 , #2 \right\rangle_{  H^{-1}  , H_0^1  }} 

 %  H^1(\Omega)     \HO
 \newcommand{\HO}{ H^1 \left( \Omega \right)}

%  H_0^1( \Omega)       \HOO
\newcommand{\HOO}{ H_0^1 \left( \Omega \right) }

% C_c^\infty(omega)
\newcommand{\CC}{C_c^\infty\left(\Omega \right) }

%H_0^1(Omega)  norm
\newcommand{\N}[1]{ \left\| #1\right\|_{ H_0^1  }  }

%H_0^1(Omega)   innerproduct 
\newcommand{\IN}[2]{ \left(#1,#2\right)_{  H_0^1} }

% H^1(\Omega) inner product
\newcommand{\INI}[2]{ \left( #1 ,#2 \right)_ { H^1}} 

% (H^1(\Omega))^*
\newcommand{\HH}{   H^1 \left( \Omega \right)^* } 

% ( H^{-1}(\Omega))
\newcommand{\HL}{ H^{-1} \left( \Omega \right) }

\newcommand{\HS}[1]{ \| #1 \|_{H^*}}

\newcommand{\HSI}[2]{ \left( #1 , #2 \right)_{ H^*}}

\newcommand{\WO}{ W_0^{1,p}} 
\newcommand{\w}[1]{ \| #1 \|_{W_0^{1,p}}}  

\newcommand{\ww}{(W_0^{1,p})^*}

\newcommand{\labeld}[1]{ \mbox{ \qquad (#1)}  \qquad   \label{ #1}  }

 \noindent \textbf{   \Large{Optimal Hardy inequalities  for general  elliptic operators with \\ improvements}      }

\begin{center} Craig Cowan\footnote{As partial fulfillment of a doctorate degree.   \\ Keywords:   Hardy inequalities, singular elliptic pde.  \\AMS Subject Classifications: 26D10 (primary), 35J20 (secondary).}

\end{center}

\begin{center}Department of Mathematics \\ University of British Columbia \\ Vancouver, B.C., Canada, V6T 1Z2

\end{center}

\begin{abstract}

We establish Hardy inequalities of the form 
\begin{equation}
\label{ONE}  \int_\Omega | \nabla u|_A^2 dx \ge \frac{1}{4} \int_\Omega \frac{| \nabla E|_A^2}{E^2}u^2dx, \qquad u \in H_0^1(\Omega)
\end{equation}
 where $ E$ is a positive function defined in $ \Omega$, $ -div(A \nabla E)$ is a nonnegative nonzero finite measure in $ \Omega$ which we denote by $ \mu$ and where $ A(x)$ is a $ n \times n$ symmetric, uniformly positive definite matrix defined in $ \Omega$ with $ | \xi |_A^2:= A(x) \xi \cdot \xi$ for $ \xi \in \IR^n$.   We show that (\ref{ONE}) is optimal if $ E=0$ on $ \pOm$ or $ E=\infty$ on the support of $ \mu$ and is not attained in either case.    When $ E=0$ on $\pOm$ we show 
\begin{equation} \label{TWO}  \int_\Omega | \nabla u|_A^2dx \ge \frac{1}{4} \int_\Omega \frac{| \nabla E|_A^2}{E^2}u^2dx + \frac{1}{2} \int_\Omega \frac{u^2}{E} d \mu, \qquad u \in H_0^1(\Omega) 
\end{equation} is optimal and not attained.  
 Optimal weighted versions of these inequalities are also established.  Optimal analogous versions of (\ref{ONE}) and (\ref{TWO}) are established for $ p \neq 2$ which, in the case that $ \mu$ is a Dirac mass, answers  a best constant question posed by Adimurthi and Sekar (see [AS]).      

Since the above inequalities do not attain a standard question to ask is for what functions $ 0 \le V(x)$ do we have 
\begin{equation}
\label{THREE}
 \int_\Omega | \nabla u|_A^2dx \ge \frac{1}{4} \int_\Omega \frac{| \nabla E|_A^2}{E^2} u^2dx + \int_\Omega V(x) u^2dx, \qquad u \in H_0^1(\Omega).
 \end{equation}   Necessary and sufficient conditions on $V$ are obtained (in terms of the solvability of a linear pde) 
  for (\ref{THREE}) to hold.  Analogous results involving improvements are obtained for the weighted versions.  

 We establish optimal inequalities which are similar to (\ref{ONE}) and are valid for  $ u \in H^1(\Omega)$.      We obtain results on improvements of this inequality which are similar to the  above results on improvements.  In addition weighted versions of this inequality are also obtained.

 We remark that most of the known Hardy inequalities (in the case where $ p=2$) can be obtained, via the above approach, by making  suitable choices for $E$ and $A(x)$.

\end{abstract}

 \section{Introduction}   

 We begin by recalling the various Hardy inequalities.  Let $ \Omega $ be a bounded domain in $ \IR^n$  containing the origin and where $ n \ge 3$.  Then  Hardy's  inequality (see [OK]) asserts that 
 \begin{equation} \label{HS}
 \int_\Omega | \nabla u|^2 dx \ge \left( \frac{n-2}{2} \right)^2 \int_\Omega \frac{u^2}{|x|^2} dx, 
 \end{equation} for all $ u \in H_0^1(\Omega)$.  Moreover the constant $ \left( \frac{n-2}{2} \right)^2$ is optimal and not attained.   An analogous result asserts that for any bounded convex domain $ \Omega \subset \IR^n$ with smooth boundary and $ \delta(x):=dist(x,\pOm)$ (the euclidean distance from $x$ to $ \pOm$), there holds (see [BM]) 
 \begin{equation} \label{bound-HS} 
 \int_\Omega | \nabla u|^2 dx \ge \frac{1}{4} \int_\Omega \frac{u^2}{\delta^2} dx, 
 \end{equation} for all $ u \in H_0^1(\Omega)$.     
 Moreover the constant $ \frac{1}{4}$ is optimal and not attained.  We will refer to this inequality as Hardy's boundary inequality.    
 
    Recently Hardy inequalities involving more general distance functions than the distance to the origin or distance to the  boundary have been studied (see [BFT]).  Suppose $ \Omega$ is a domain in $ \IR^n$ and $M$ a piecewise smooth surface of co-dimension $k,$ $ k=1, ... , n$.  In case $ k=n$ we adopt the convention that $M$ is a point, say, the origin.  Set $ d(x):= dist(x,M)$.      Suppose $ k \neq 2$ and $ - \Delta d^{2-k} \ge 0 $ in $ \Omega \backslash M$  then 
 \begin{equation} \label{AAA}
 \int_\Omega | \nabla u|^2 dx \ge \frac{(k-2)^2}{4} \int_\Omega \frac{u^2}{d^2} dx, 
 \end{equation}
 for all $ u \in H_0^1( \Omega \backslash M)$.     We comment that the above inequalities all have $L^p$ analogs.
 
 In the last few years improved versions of the above inequalities have been obtained, in the sense that non-negative terms are added to the right hand sides of the inequalities; see [BV], [BM], [BFT], [BMS],[FT],[FHT],[VS].   One common type of improvement for the above Hardy inequalities are the so called potentials; we call $ 0 \le V(x)$, defined in $ \Omega $, a potential for (\ref{HS}) provided 
  \[ \int_\Omega | \nabla u|^2 dx - \left( \frac{n-2}{2} \right)^2 \int_\Omega \frac{u^2}{|x|^2} dx \ge \int_\Omega V(x) u^2 dx, \qquad u \in H_0^1(\Omega). \]   Most of the results in this direction are explicit examples of potentials $V$ where, in the best results, $V$ is an infinite series involving complicated inductively defined functions.   Very recently Ghoussoub and Moradifam [GM] gave the following necessary and sufficient conditions for a radial function $ V(x)=v(|x|)$ to be a potential in the case of  Hardy's inequality (\ref{HS}) on a radial domain $\Omega$: \\ $V$ is a potential if and only if there exists a positive function $y(r)$ which solves $ y''+ \frac{y'}{r} + vy=0 $ in $ (0, \sup_{x \in \Omega}|x|)$.
   
  In another direction people have considered Hardy  inequalities for operators more general than the Laplacian.   One case of this is the results obtained by Adimurthi and A. Sekar [AS]: \\     Suppose $ \Omega$ is a smooth domain in $ \IR^n$ which contains the origin, $ A(x) = ((a^{i,j}(x)))$ denotes a symmetric, uniformly positive definite matrix with suitably smooth coefficients and  for $ \xi \in \IR^n$ we define $ |\xi |_{A}^2:=| \xi |_{A(x)}^2:= A(x) \xi \cdot \xi $.  Now suppose $ E $ is a solution of $\L_{A,p}(E):= - div \left( | \nabla E|_A^{p-2} A \nabla E \right) = \delta_0  $ in $ \Omega$ with $ E=0$ on $ \pOm$ where $ \delta_0$ is the Dirac mass at $0$.    Then for all $ u \in W_0^{1,p}(\Omega)$ 
  \[ \int_\Omega | \nabla u|_A^2 dx- \left( \frac{p-1}{p} \right)^p \int_\Omega \frac{| \nabla E|_A^p}{E^p} |u|^pdx \ge 0. \] Improvements of this inequality were also obtained and they posed the following question: Is $ \left( \frac{p-1}{p} \right)^p $ optimal?\\    We show this is the case, even  for a more general inequality.    

 After completion of this work we noticed that various people had taken a similar approach to generalized Hardy inequalities, see [DL], [KMO],[LW].

\subsection{Outline and approach}

Our approach will be similar to the one taken by Adimurthi and A. Sekar but we mostly concentrate on the quadratic case ($p=2$) and for this we define  $ \L_A(E):=-div(A \nabla E)$.

 We now motivate our main inequality.  Suppose $ E $ is a smooth positive function defined in $ \Omega$. 
 Let $ u \in C_c^\infty(\Omega)$ and set $ v:=E^\frac{-1}{2} u$.    Then a calculation shows that 
  \[ | \nabla u|_A^2 - \frac{| \nabla E|_A^2}{4 E^2} u^2 = E | \nabla v|_A^2 + \frac{ A \nabla E \cdot \nabla (v^2)}{2}, \qquad \mbox{in $ \Omega$} \] and after integrating this over $ \Omega$ we obtain 
  \begin{equation} \label{start}
   \int_\Omega | \nabla u|_A^2 dx - \frac{1}{4} \int_\Omega \frac{| \nabla E|_A^2}{E^2} u^2 dx = \int_\Omega E | \nabla v|_A^2 dx + \frac{1}{2} \int_\Omega \frac{u^2}{E} \L_A(E) dx. 
   \end{equation}  If we further assume that $ \L_A(E) \ge 0 $ in $ \Omega$ then  
  \begin{equation} \label{genhardy}
   \int_\Omega | \nabla u|_A^2dx \ge \frac{1}{4} \int_\Omega \frac{| \nabla E|_A^2}{E^2}u^2dx, \qquad u \in H_0^1(\Omega).
   \end{equation}
    From this we see that the optimal constant $ C(E)$
  \[C(E):= \inf \left\{ \frac{ \int_\Omega | \nabla u|_A^2 }{  \int_\Omega \frac{| \nabla E|_A^2}{E^2}u^2 }dx : \; u \in H_0^1(\Omega) \backslash \{0\} \right\} \ge \frac{1}{4}. \]   It is possible to show that for all non-zero $u \in H_0^1(\Omega)$ we have 
  \[ \int_\Omega E | \nabla v|_A^2 dx > 0, \] where $ v$ is defined as above.  
  Using this and (\ref{start}) one sees that if $C(E) = \frac{1}{4}$ then $C(E)$ is not attained and hence if $C(E)$ is attained then $ C(E) > \frac{1}{4}$.   This shows that $ \frac{| \nabla E|_A^2}{E^2}$ needs to be singular if we want $C(E)=\frac{1}{4}$.   In fact one can show that   $ H_0^1(\Omega)$ is compactly embedded in $ L^2(\Omega, \frac{| \nabla E|_A^2}{E^2} dx) $ if $ \frac{ | \nabla E|_A^2}{E^2} \in L^p(\Omega)$ for some $ p > \frac{n}{2}$ and so one could then apply standard compactness arguments to show that $C(E)$ is attained.   We are  only interested in the case where $ C(E)= \frac{1}{4}$ and hence we need to ensure $ \frac{| \nabla E|_A^2}{E^2}$ is singular and this can be done in two obvious ways.  This naturally leads one to consider the following two classes of functions  $E$ (weights).

\begin{dfn} Suppose $0 <E$ in $ \Omega$ and $ \L_A(E) $ is a nonnegative nonzero finite measure in $ \Omega$ denoted by $ \mu$.   \\
1) \quad   If in addition $ E \in H_0^1(\Omega)$ then we call $E$ a boundary weight on $ \Omega$.   \\
2)  \quad If in addition $ E \in C^\infty(\Ov \backslash K)$ where $K \subset \Omega$ denotes the support of $ \mu$, $ E= \infty$ on $K$ and $dim_{box}(K)<n-2$ (see below) then we call $E$ an interior weight on $ \Omega$.
\end{dfn}

Given a compact subset $ K$ of $ \IR^n$ we define the box-counting dimension (entropy dimension) of $K$  by 
  \[ dim_{box}(K):= n - \lim_{r \searrow 0} \frac{ \log( \mathcal{H}^n(K_r))}{\log(r)} \] provided this limit exists and where $K_r:=\{ x \in \Omega: dist(x,K) < r \}$ and $ \mathcal{H}^\alpha $ is the $ \alpha $- dimensional Hausdorff measure.

\begin{remark}
 It is possible to show that $ C_c^{0,1}(\Omega \backslash K)$ 
 is dense in $W_0^{1,p}(\Omega)$ provided $K$ is compact and $ dim_{box}(K) < n-p$ (use appropriate Lipschitz cut off functions). 
 \end{remark}
 From here on $ \mu$ will  denote the measure $\L_A(E)$ and in the case where $ E$ is an interior weight on $ \Omega$, $K$ will denote the support of $ \mu$.   
 
 We now list the main results.  \\
 We show that if $E$ is either an interior or a  boundary weight in  $\Omega$ then we have the following inequality:
\begin{equation} \label{u}
\int_\Omega | \nabla u|_A^2dx  - \frac{1}{4} \int_\Omega \frac{ | \nabla E|_A^2}{E^2}u^2dx \ge 0, \qquad u \in H_0^1(\Omega) 
\end{equation}   with optimal constant which is not attained. \\  
 In the case that $E$ is a boundary weight on $ \Omega$  we  obtain 
\begin{equation} \label{uu}
\int_\Omega | \nabla u|_A^2dx - \frac{1}{4} \int_\Omega \frac{| \nabla E|_A^2}{E^2} u^2dx \ge \frac{1}{2} \int_\Omega \frac{u^2}{E} d \mu,  \qquad   u \in H_0^1(\Omega).
\end{equation}   Moreover $ \frac{1}{2}$ is optimal (once one fixes $ \frac{1}{4}$) and is not attained.

Using the methods developed in [GM] we  obtain necessary and sufficient conditions  on $ 0 \le V(x)$ to be a potential for (\ref{u}) in the case where $ E $ is an interior weight.   We show  that the following are equivalent: \\
1) \quad  For all $ u \in H_0^1(\Omega)$
\[ \qquad \int_\Omega | \nabla u|_A^2dx  - \frac{1}{4} \int_\Omega \frac{ | \nabla E|_A^2}{E^2}u^2dx \ge \int_\Omega V(x) u^2dx.\] 
2) \quad There exists some $ 0< \theta \in C^2(\Omega \backslash K) $ such that 
\begin{equation} 
   \frac{ -\L_A(\theta)}{\theta} + \frac{| \nabla E|_A^2}{4 E^2} + V \le 0 \qquad \mbox{ in $ \Omega \backslash K$}.
   \end{equation} 
If we further assume that $ E= \gamma \ge 0$ (constant) on $ \pOm$ and if we are only interested in potentials of the form $ V(x)=f(E) | \nabla E|_A^2$ then we can replace 2) with  \\
  2') \quad There exists some $ 0 < h \in C^2(\gamma, \infty)$ such that 
  \begin{equation}
  h''(t)+ \left( f(t)+ \frac{1}{4t^2} \right) h(t) \le 0, \qquad \mbox{ in $ ( \gamma, \infty)$.}
  \end{equation}   In practice this ode classification is more useful because of the shear abundance of ode results in the literature.  
   
We obtain weighted versions of (\ref{u}) (respectively (\ref{uu})) in the case that $E$ is an interior weight (respectively boundary weight) on $ \Omega$  which can be viewed as generalized versions of the Cafferelli-Kohn-Nirenberg inequality.  To be more precise we obtain: \\ 
Suppose $ E $ is an interior weight on $ \Omega$ and  $ t \neq \frac{1}{2}$.  Then 
  \begin{equation} \label{wwww}
  \int_\Omega E^{2t} | \nabla u|_A^2dx \ge ( t - \frac{1}{2} )^2 \int_\Omega | \nabla E|_A^2 E^{2t-2} u^2dx, 
  \end{equation} for all $ u \in C_c^{0,1}(\Omega \backslash K)$.  Moreover  the constant is optimal and not attained in the naturally induced function space.     \\
  Suppose $E$ is a boundary weight on $ \Omega$ and  $ 0 \neq t < \frac{1}{2}$.     Then (\ref{wwww}) holds for all $ u \in C_c^\infty(\Omega)$ and is not attained in the naturally induced function space.  Similarly 
   \begin{equation} 
   \int_\Omega E^{2t} | \nabla u|_A^2dx - ( t - \frac{1}{2})^2 \int_\Omega | \nabla E|_A^2 E^{2t-2} u^2dx \ge ( \frac{1}{2}-t) \int_\Omega E^{2t-1} u^2 d \mu, 
   \end{equation} for all $ u \in  C_c^\infty(\Omega)$.  Moreover the constant on the right is optimal and not attained in the natural function space.    In addition we show that the class of potentials for (\ref{wwww})   is given by $ \{ E^{2t} V: \mbox{$V$ is a potential for (\ref{u})} \}$. 
  
  We also examine generalized Hardy inequalities which are valid for functions $ u \in H^1(\Omega)$.    Suppose $ E $ a positive function with $ \L_A(E)+E $ a nonnegative nonzero finite measure denoted by $ \mu$, $ E = \infty$ on the $K$ (as before $K$ denotes the support of $\mu$)  and where we assume that $E$ satisfies a Neumann  boundary condition.  Then  
  \begin{equation} \label{non}
   \int_\Omega | \nabla u|_A^2 dx+ \frac{1}{2} \int_\Omega u^2 dx \ge \frac{1}{4} \int_\Omega \frac{| \nabla E|_A^2}{E^2}u^2dx, \qquad u \in H^1(\Omega).
   \end{equation}  Moreover these constants are optimal (in the sense that if one is fixed then the other is optimal).    Improvements of (\ref{non}) are also obtained.   Assuming the same conditions on $E$ we show that for $ 0 \le V$ we have 
   \[ \int_\Omega | \nabla u|_A^2 + \frac{1}{2} \int_\Omega u^2 dx - \frac{1}{4} \int_\Omega \frac{|\nabla E|_A^2}{E^2}u^2 dx \ge \int_\Omega V(x) u^2 dx, \qquad u \in H^1(\Omega)\] if and only if there exists some $ 0 < \theta \in C^\infty(\Ov \backslash K)$ such that 
   \begin{equation} \label{EQ2} -\L_A(\theta)- \frac{\theta}{2} + \frac{| \nabla E|_A^2}{4E^2} \theta +V \theta \le 0 \qquad \mbox{in $ \Omega \backslash K$},
   \end{equation} with $ A \nabla \theta \cdot \nu  =0$ on $ \pOm$.    \\
   Weighted version of (\ref{non}) are established.    Assuming the same conditions on $E$ we show that for $ t \neq \frac{1}{2}$ we have 
  \[ \int_\Omega E^{2t}| \nabla u|_A^2dx + \frac{1}{2} \int_\Omega E^{2t} u^2 dx \ge \left( t - \frac{1}{2} \right)^2 E^{2t-2} | \nabla E|_A^2 u^2 dx,\] for all $ u \in C_c^\infty(\Ov \backslash K)$.  Moreover the constants are optimal and not obtained in the naturally induced function space.  
  
  We establish optimal Hardy inequalities which are valid on exterior and annular domains.   Suppose $ \Omega $ is a exterior domain in $ \IR^n$, $ E>0$ in $ \IR^n$, $ \lim_{|x| \rightarrow \infty} E=0$, $ -\Delta E = \mu$ in $ \IR^n$ where $ \mu$ is a nonzero nonnegative finite measure with compact support $K$.  In addition we assume that $ dist(K,\Omega)>0$ and $ \partial_\nu E \ge 0 $ on $ \pOm$.  We define  $ D^1(\Omega \cup \pOm) $ to be the completion of $ C_c^\infty(\Omega \cup \pOm)$ with respect to the norm $ \| \nabla u\|_{L^2(\Omega)}$. Then  \\
  \noindent (i) \quad For all $ u \in D^1(\Omega \cup \pOm)$ we have 
 \begin{equation}  \label{Ext200}
 \int_\Omega | \nabla u|^2 dx \ge \frac{1}{4} \int_\Omega \frac{ | \nabla E|^2}{E^2} u^2 dx.
 \end{equation}  Moreover the constant is optimal and not attained.   \\
 (ii) \quad For all $ u \in D^1(\Omega \cup \pOm)$ we have 
 \begin{equation} 
 \int_\Omega | \nabla u|^2dx \ge \frac{1}{4} \int_\Omega \frac{ | \nabla E|^2}{E^2} u^2 dx + \frac{1}{2} \int_{\pOm} \frac{u^2 \partial_\nu E}{E} dS(x).
 \end{equation}
 
 Now suppose $ \Omega=\Omega_2 \backslash \overline{\Omega_1}$ where $ \Omega_1 \subset \subset \Omega_2$ are both connected and $ \Omega $ is connected.  Suppose  $ 0<E $ in $ \Omega_2$ and $ -\Delta E = \mu$ in $ \Omega_2$ where $ \mu$ is a nonnegative nonzero finite measure compactly supported in $ \Omega_1$.   In addition we assume that $ E=0$ on $ \pOm_2$ and $\partial_\nu E \le 0 $ on $ \pOm_1$.  Then (\ref{Ext200}) is optimal and not attained over $ H_0^1(\Omega \cup \pOm_1):=\{ u \in H^1(\Omega): u=0 \; \mbox{ on } \; \pOm_2\}$.

  Optimal non-quadratic Hardy inequalities are also obtained in both the interior and boundary  cases.

  \subsection{Examples}
  
  We now look at various examples of Hardy inequalities (and applications of) which can be obtained  after making suitable choices of weights $E$ and matrices $A$.   In most of the examples we will take $ A $ to be the identity matrix.

  \begin{enumerate}
 \item \textbf{Hardy's inequality:} \quad Let $ \Omega $ denote a domain in $ \IR^n$ ($n\ge3$) which contains the origin and set $ E(x):=|x|^{2-n}$.  Then $ -\Delta E = c \delta_0$ where $ c >0$ and $ \delta_0$ is the Dirac mass at $0$.  Also $ \frac{ | \nabla E|^2}{4 E^2}=  \left( \frac{n-2}{2} \right)^2 \frac{1}{|x|^2} $ and so (\ref{u}) gives the  Hardy's inequality.

\item \textbf{Hardy's inequality in dimension two:} \quad Now suppose $ \Omega $ is a domain in $ \IR^2$ which contains the origin.  Put  $ E(x):= -\log(R^{-1} |x|) $ where  $ R:= \sup_\Omega |x|$.   Then $ -\Delta E = c \delta_0$ where $ c >0$ and putting $ E $ into (\ref{u}) gives 
 \[ \int_\Omega | \nabla u|^2 dx \ge \frac{1}{4} \int_\Omega \frac{u^2}{|x|^2 \log^2(R^{-1} |x|) }dx, \qquad u \in C_c^\infty(\Omega). \]
 
 \item \textbf{Hardy's boundary  inequality:} \quad Let $ \Omega $ denote a bounded convex set in $ \IR^n$ and set $ E(x):=\delta(x):=dist(x,\pOm)$.  Since $ \Omega $ is convex one can show $ \delta $ is concave and hence $ - \Delta \delta \ge 0$ in $ \Omega$.   Putting  $ E $ into (\ref{uu}) gives an improved version of (\ref{bound-HS}).
 
 \item \textbf{Hardy's boundary inequality in the unit ball:}  \quad Let $B$ denote the unit ball in $ \IR^n$ and set $ E(x):=1-|x|$.  Putting $E$ into (\ref{uu}) gives 
 \[ \int_B | \nabla u|^2dx \ge \frac{1}{4} \int_B \frac{u^2}{(1-|x|)^2}dx + \frac{n-1}{2} \int_B \frac{u^2}{|x|(1-|x|)}dx, \qquad u \in C_c^\infty(B).\]
 
 \item \textbf{Intermediate case:} \quad Set $ E(x):=d(x)^{2-k} $ where $ d$ and $k$ are as in (\ref{AAA}).  Since $ -\Delta E \ge 0 $ we obtain (\ref{AAA}) after subbing $E$ into (\ref{u}). 
 
  \item \textbf{Hardy's boundary inequality in the half space:} \quad Let $ \IR^n_+$ denote the half space and set $ E(x):=dist(x, \IR^n_+)=x_n$.  Then putting $ E $ into (\ref{u}) gives 
 \[ \int_{\IR^n_+} | \nabla u|^2dx \ge \frac{1}{4} \int_{\IR^n_+} \frac{u^2}{x_n^2}dx, \qquad u \in C_c^\infty(\IR^n_+).\]  Maz'ja (see [M]) obtained the following improvement
 \[\int_{\IR^n_+} | \nabla u|^2dx - \frac{1}{4} \int_{\IR^n_+} \frac{u^2}{x_n^2}dx \ge \frac{1}{16} \int_{\IR^n_+} \frac{u^2}{ (x_n^2 + x_{n-1}^2)^\frac{1}{2} x_n}dx , \qquad u \in C_c^\infty(\IR^n_+). \]  One might ask whether we can take a more symmetrical potential in the improvement, say something like $V(x)=f(x_n)$ where $ f $ is strictly positive.  Using our ode classification of potentials we will see that this is not possible.

 \item \textbf{Hardy's   inequality valid for $ u \in H^1(\Omega)$:} \quad Let $ B$ denote the unit ball in $ \IR^3$ and set $ E(x):=|x|^{-1}e^{|x|}$.   Then a computation shows that 
 \[ -\Delta E + E = 4 \pi^2 \delta_0 \qquad \mbox{in $B$} \] and where $ \partial_\nu E =0 $ on $ \partial B$. Here $ \delta_0$ is the Dirac mass at $0$.   Putting $E$ into (\ref{non}) we see that 
 \[ \int_B | \nabla u |^2 dx+ \frac{1}{2} \int_B u^2 dx\ge \frac{1}{4} \int_B \frac{ (1-|x|)^2}{|x|^2} u^2dx, \qquad u \in H^1(B).\]   Also the constants are optimal (in the sense mentioned in (\ref{non})) and are not attained.  
 
 \item \textbf{$H^1(\Omega)$ Hardy inequalities in exterior domains:} \quad Let  $ \Omega$ denote an exterior domain in $ \IR^n$ with $ n \ge 3$, $ 0 \notin \overline{\Omega}$ and such that $ \nu(x)\cdot x \le 0$ for all $ x \in \pOm$.  Setting $ E:=|x|^{2-n}$ in (\ref{Ext200}) we obtain 
\[ \int_{\Omega} | \nabla u|^2 dx \ge \left( \frac{n-2}{2} \right)^2 \int_{\Omega} \frac{u^2}{|x|^2} dx, \] for all $ u \in C_c^\infty(\Omega \cup \pOm)$.  Moreover the constant is optimal and not attained in the naturally induced function space.

 \item \textbf{Hardy's inequality in a annular domain:} \quad  Assume that $ 0 \in \Omega_1 \subset \subset B_R \subset \IR^2$ where $ \Omega_1$ is connected and $ B_R$ is the open ball centered at $0$ with radius $R$.  In addition we assume that $ x \cdot \nu(x) \ge 0$ on $ \pOm_1$ where $ \nu$ is the outward pointing normal.  Define $ \Omega:=B_R \backslash \overline{\Omega_1}$, which we assume is connected,  and set $ E(x):=-\log( R^{-1}|x|)$.  Then by the above mentioned results on annular domains one has 
 \[ \int_{\Omega} | \nabla u|^2 dx \ge \frac{1}{4} \int_\Omega \frac{u^2}{|x|^2 \log^2(R^{-1}|x|)} dx, \]
 for all $ u \in H_0^1(\Omega \cup \Omega_1)$.  Moreover the constant is optimal and not attained.

 \item  Suppose $ E >0 $  in $ \Omega$,  let $ f:(0,\infty) \rightarrow (0,\infty)$ and set $ \tilde{E}:=f(E)$.  Putting $ \tilde{E} $ into (\ref{start}) for $E$ gives 
 \[ \int_\Omega | \nabla u|_A^2 dx \ge \int_\Omega | \nabla E|_A^2 \left( \frac{ f'(E)^2}{4 f(E)^2} - \frac{ f''(E)}{2 f(E)} \right) u^2dx + \frac{1}{2} \int_\Omega \frac{f'(E) \L_A(E)}{f(E)} u^2dx, \qquad u \in C_c^\infty(\Omega).\]   An important example will be when $ f(E):=E^t$ where $ 0 < t <1$;  in fact we will use $ E(x):=\delta(x)^t$ ($\delta(x):=dist(x,\pOm)$) to show that if one drops the requirement that $ \mu$ is a \emph{finite} measure (and just assumes $ \mu $ a  locally finite measure) (\ref{u}) need not be optimal.

 \item \textbf{Eigenvalue bound:} \quad Let $ \Omega $ be a bounded subset of $ \IR^n$ and $E>0$, $ \L_A(E) \ge 0 $ in $ \Omega$ with $ | \nabla E|_A^2 = 1$ a.e. in $ \Omega$.     Let $ \lambda_A(\Omega)$ denote the first eigenvalue of $\L_A$ in $ H_0^1(\Omega)$.  Then $ \lambda_A(\Omega) \| E \|_{L^\infty}^2 \ge \frac{\pi^2}{4}$.
   To show this one puts $ f(z):=\sin^2( \frac{ \pi z}{2 \| E \|_{L^\infty}}) $ into the above result and drops the term involving the measure.

\item   Suppose $ E $ is an interior weight on $ \Omega$ with $ E =1$ on $ \pOm$.  Then by using the above result with $ f(E):=(\log(E))^\frac{1}{2}$ one obtains the  inequality
\[ \int_\Omega | \nabla u|_A^2dx \ge \frac{1}{16} \int_\Omega \frac{ 3+4 \log(E)}{E^2 \log^2(E)} | \nabla E|_A^2 u^2dx, \qquad u \in H_0^1(\Omega).\]  Taking instead  $ f(E):=E \log(E)$  gives 
\[ \int_\Omega | \nabla u|_A^2 dx \ge \frac{1}{4} \int_\Omega \frac{\log^2(E)+1}{E^2 \log^2(E)} u^2dx, \qquad u \in H_0^1(\Omega).\]

   \item \textbf{Poincare's inequality in an unbounded slab:} In general 
   \[ \int_\Omega | \nabla u|^2 dx\ge C \int_\Omega u^2dx, \qquad u \in C_c^\infty(\Omega)\] does not hold for unbounded domains.   It is known that for certain unbounded domains the inequality does in fact hold.  One example would be $ \Omega:=\{ x \in \IR^n: 0 <x_n < \pi \}$.  We now use (\ref{start}) to show a slightly stronger result.  Put $ E(x):=\sin(x_n)$ into (\ref{start}) and drop a term to arrive at 
   \[ \int_\Omega | \nabla u(x)|^2 dx \ge \frac{1}{4} \int_\Omega \frac{u(x)^2}{ \tan^2(x_n)} dx + \frac{1}{2} \int_\Omega u(x)^2 dx, \qquad u \in C_c^\infty(\Omega).\]

   \item \textbf{Hardy's boundary inequality in a cone:} Put $ \Omega :=(0,\infty) \times (0,\infty)$ and $E(x):=dist(x,\pOm)=\min\{ x_1,x_2\}$.  Then $ -\Delta E = \sqrt{2} \sigma$ where $ \sigma $ is the measure associated with the line $ \Gamma:=\{x:  x_2=x_1\}$.   Putting $E$ into (\ref{start}) gives 
   \[ \int_\Omega | \nabla u|^2dx \ge \frac{1}{4} \int_\Omega \frac{u^2}{( \min\{x_1,x_2\})^2}dx + \frac{1}{\sqrt{2}} \int_\Gamma \frac{u^2}{ \min\{x_1,x_2\} } d \sigma, \qquad u\in C_c^\infty(\Omega). \]

 \item  Suppose $ -\Delta \phi =1 $ in $ \Omega $ with $ \phi =0 $ on $ \pOm$.  Define $ E:=e^{t \phi}-1$.  Then $ -\Delta E = t e^{t \phi}( 1 - t | \nabla \phi|^2) $ which is non-negative for  sufficiently small $t>0$.  Then $E$ is a boundary weight and hence putting $E$ into (\ref{uu}) gives 
 \[ \int_\Omega | \nabla u|^2 dx\ge \frac{t^2}{4} \int_\Omega \frac{ e^{2t \phi} | \nabla \phi|^2}{(e^{t\phi}-1)^2} u^2dx + \frac{t}{2} \int_\Omega \frac{e^{t \phi} ( 1 - t | \nabla \phi|^2)}{e^{t \phi}-1} u^2dx, \qquad u \in H_0^1(\Omega) \] which is optimal.  Sending $ t \searrow 0$ recovers (\ref{uu}) with $E=\phi$.
 
 \item \textbf{Trace theorem:} \quad  Let $ \Omega $ denote a domain in $ \IR^n$ where $ n \ge 3 $ and such that $ B \subset \subset \Omega$ (here $ B$ is the unit ball).   Define 
 \begin{equation*}
 E(x):= \left\{
 \begin{array}{ll}
 1 & \qquad |x| < 1 \\
 \frac{1}{|x|^{n-2}} & \qquad  |x| >1.
 \end{array}
 \right.
 \end{equation*}   A computation shows that $ -\Delta E = c \sigma $ where $ c >0$ and where $ \sigma $ is the surface measure associated with $ \partial B$.   Putting this $ E $ into (\ref{uu}) and dropping a couple of terms gives 
 \[ \int_\Omega | \nabla u|^2 dx \ge \frac{c}{2} \int_{\partial B} u^2 d \sigma, \qquad u \in C_c^\infty(\Omega).\]    
 
 \item \textbf{Regularity:}  Suppose $ E \in L^\infty_{loc}(\Omega) $ is a positive solution to $ \L_A(E) = \mu $ in $ \Omega $ where $ \mu$ is locally finite measure.   Then using (\ref{u}) we see that $ E \in H^1_{loc}(\Omega)$.
 
 \item  \textbf{Baouendi-Grushin operator:}  Here we mention that various operators can be put into the form we are interested in.  Suppose $ \Omega $ is an open subset of $ \IR^N =\IR^n \times \IR^k$ and $ \xi \in \Omega $ is written $ \xi=(x,y) $ using the above decomposition of $ \IR^N$.    For $ \gamma >0$ one defines the vector field $ \nabla_\gamma:=( \nabla_x, |x|^\gamma \nabla_y)$ and the Baouendi-Grushin operator $ {\L_A}:= -\Delta_x - |x|^{2 \gamma} \Delta_y$.   Take 
\begin{displaymath}
A(\xi):= \left( \begin{array}{clc} 
I_n & 0 \\
0 & |x|^{2 \gamma} I_k 
\end{array} \right)
\end{displaymath} where $ I_n, I_k$ are the identity matrices of size $n$ and $k$.   Then $ | \nabla_\gamma E|^2=| \nabla E|_A^2$ and $ -div(A \nabla E)= {\L_A}(E)$.  

 \end{enumerate}

\section{Main Results} 

Throughout this article we shall assume that $ \Omega$ is a bounded connected domain in $ \IR^n$ (unless otherwise mentioned) with smooth boundary
 and $A(x) = ((a^{i,j}(x)))$ is a $ n \times n$    symmetric, uniformly positive definite matrix with $ a^{i,j} \in C^\infty(\Ov)$ and for $  \xi \in \IR^n$ we define $ |\xi |_{A}^2:=| \xi |_{A(x)}^2:= A(x) \xi \cdot \xi $.  
 
 If $E$ is an interior weight or a boundary weight on $ \Omega$  we have, by the strong maximum principle (see [V]), $E$  bounded away from zero on compact subsets of $ \Omega$. 
  
The following theorem gives the main inequalities.  In addition we consider a slight generalization of the case where $E$ is a boundary weight on $ \Omega$. 

\begin{thm} \label{first}  
\flushleft 
(i) \quad  Suppose $ E $ is either an interior or a  boundary weight on  $\Omega$.  Then
 
\begin{equation} \label{hardy}
\int_\Omega | \nabla u|_A^2dx  - \frac{1}{4} \int_\Omega \frac{ | \nabla E|_A^2}{E^2}u^2dx \ge 0, 
\end{equation} for all $ u \in H_0^1(\Omega)$.  Moreover $ \frac{1}{4}$ is optimal and not attained.  \\
(ii) \quad   Suppose $ E $ is a boundary weight on  $\Omega$.  Then  
\begin{equation} \label{extra}
\int_\Omega | \nabla u|_A^2dx - \frac{1}{4} \int_\Omega \frac{| \nabla E|_A^2}{E^2} u^2 dx\ge \frac{1}{2} \int_\Omega \frac{u^2}{E} d \mu, 
\end{equation} for all $ u \in H_0^1(\Omega)$.  Moreover $ \frac{1}{2}$ is optimal (once one fixes $ \frac{1}{4}$) and is not attained. \\
(iii) \quad Suppose $ E \in C^\infty(\Ov)$ with $ E>0$, $ \L_A(E) \ge 0$ in $ \Omega$ and   $ \Gamma:=\{x \in \pOm: E(x)=0\}$ contains $ B(x_0,r) \cap \pOm$ for some $ x_0 \in \pOm$ and $ r>0$.   Then (\ref{hardy}) is optimal.  

\end{thm}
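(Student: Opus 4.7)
My plan is to base everything on the integrated identity (\ref{start}),
\begin{equation*}
\int_\Omega |\nabla u|_A^2\,dx - \tfrac{1}{4}\int_\Omega \frac{|\nabla E|_A^2}{E^2}u^2\,dx = \int_\Omega E|\nabla v|_A^2\,dx + \tfrac{1}{2}\int_\Omega \frac{u^2}{E}\,d\mu,\qquad v:=E^{-1/2}u,
\end{equation*}
which I will call $(\star)$. I would first justify $(\star)$ rigorously: in the interior case by taking $u\in C_c^\infty(\Omega\setminus K)$, which makes the measure term vanish since $u$ is supported away from $K=\mathrm{supp}(\mu)$, then extending to $u\in H_0^1(\Omega)$ via the density statement in Remark 1.1; in the boundary case by regularizing $E\mapsto E+\epsilon$ so that $\mathrm{div}(A\nabla E)$ may be legitimately integrated against $v^2$ before sending $\epsilon\searrow 0$. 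Because both terms on the right of $(\star)$ are non-negative, (\ref{hardy}) follows in case (i) and (\ref{extra}) in case (ii). Case (iii) is handled analogously: the pointwise identity and the inequality (\ref{hardy}) hold for $u\in C_c^\infty(\Omega)$ directly from $\mathcal{L}_A(E)\ge 0$, and extend to $H_0^1(\Omega)$ by density.

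To prove $\tfrac14$ is optimal in (i) and (iii), I would exhibit a minimizing sequence $u_\alpha := E^{1/2+\alpha}\phi_\alpha$ with $\phi_\alpha$ a smooth cutoff. In the interior case $\alpha<0$ with $\phi_\alpha$ vanishing near $K$ and near $\partial\Omega$; in the boundary case $\alpha>0$ with $\phi_\alpha\equiv 1$; in case (iii) the cutoff is supported in $B(x_0,r)\cap\overline{\Omega}$, which is admissible because $E=0$ on $\partial\Omega\cap B(x_0,r)$ forces $u_\alpha=0$ there. Direct expansion gives
\[ \int_\Omega|\nabla u_\alpha|_A^2\,dx = \Bigl(\alpha+\tfrac12\Bigr)^2\int_\Omega E^{2\alpha-1}|\nabla E|_A^2\phi_\alpha^2\,dx + R_\alpha,\qquad \int_\Omega\frac{|\nabla E|_A^2}{E^2}u_\alpha^2\,dx = \int_\Omega E^{2\alpha-1}|\nabla E|_A^2\phi_\alpha^2\,dx, \]
so the Rayleigh quotient tends to $\tfrac14$ as $\alpha\to 0$ provided $R_\alpha$ is negligible compared with the leading integral. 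This comparison is the chief technical obstacle: one must arrange that the leading integral diverges as $\alpha\to 0$, exploiting $E\to 0$ on the vanishing part of $\partial\Omega$ (boundary and (iii) cases) or $E\to\infty$ on $K$ (interior case), in order to absorb the bounded contributions from $\nabla\phi_\alpha$ and cross-terms.

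Non-attainment of $\tfrac14$ follows directly from $(\star)$: an extremizer $u\in H_0^1(\Omega)\setminus\{0\}$ would force both non-negative terms on the right of $(\star)$ to vanish, so $v\equiv c$ on the connected set $\Omega$ and $u=cE^{1/2}$; but $cE^{1/2}\notin H_0^1(\Omega)$ for $c\neq 0$ (in the interior case because $E^{1/2}=\infty$ on $K$ while $H_0^1$-functions vanish quasi-everywhere on $K$ by Remark 1.1; in the boundary case because plugging $u=cE^{1/2}$ into (\ref{extra}) would yield $0\ge\tfrac{c^2}{2}\int_\Omega d\mu>0$). The optimality of $\tfrac12$ in (ii) follows by the same identity: if $\tfrac12+\epsilon$ were admissible, $(\star)$ would require $\int_\Omega E|\nabla v|_A^2\,dx \ge \epsilon\int_\Omega Ev^2\,d\mu$ for every $v$ with $E^{1/2}v\in H_0^1(\Omega)$, and I would refute this by choosing $v_n$ equal to $1$ on a neighborhood of $\mathrm{supp}(\mu)$ and decaying to $0$ near $\partial\Omega$, engineered so that $\int_\Omega E|\nabla v_n|_A^2\,dx\to 0$ while $\int_\Omega Ev_n^2\,d\mu$ stays bounded below; non-attainment of $\tfrac12$ then follows by the same argument as for $\tfrac14$.
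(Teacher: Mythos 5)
Your overall architecture — the identity $(\star)$, minimizing sequences of the form $E^{1/2+\alpha}$, and non-attainment by retaining the term $\int_\Omega E|\nabla v|_A^2\,dx$ — is the same as the paper's. But there is a genuine gap at exactly the point you flag as "the chief technical obstacle": you never prove that the leading integral $I(t):=\int_\Omega E^{2t-2}|\nabla E|_A^2\,dx$ diverges as $t\to\frac12$, and the heuristic you offer for it ("exploiting $E\to0$ on $\pOm$ or $E\to\infty$ on $K$") cannot work, because it makes no use of the hypothesis that $\mu=\L_A(E)$ is a \emph{finite} measure. The paper's own Example 2.1 is a counterexample to your heuristic: $E=\delta^t$ with $\frac12<t<1$ on a convex domain satisfies $E>0$, $E\in H_0^1(\Omega)$, $E\to0$ on $\pOm$ and $\L_A(E)\ge0$, yet $\frac14$ is \emph{not} optimal — precisely because $\mu$ is only locally finite. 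Any correct optimality proof must therefore invoke finiteness of $\mu$, and the paper does so by multiplying $\L_A(E)=\mu$ by $E^{2t-1}$ and integrating by parts, which gives $(1-2t)I(t)=\varepsilon(t)+\mu(\Omega)$ in the interior case and $(2t-1)I(t)=\int_\Omega E^{2t-1}\,d\mu$ in the boundary case. This single identity does all the quantitative work you are missing: it shows the test functions lie in $H_0^1(\Omega)$ (no cutoff near $K$ is needed in the interior case, where the paper uses $E^t-g^t$ with $g$ the $\L_A$-harmonic replacement of $E$ — your cutoff $\phi_\alpha$ vanishing near $K$ would kill the divergence unless it concentrates at a carefully tuned rate), it forces $I(t)\to\infty$, it shows $E^{1/2}\notin H_0^1(\Omega)$, and in part (ii) it yields the exact Rayleigh quotient $\frac{t}{2}+\frac14\to\frac12$ for $u=E^t$, $t\searrow\frac12$, replacing your unproven capacity construction of the $v_n$.

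Two secondary points. First, your reason for $E^{1/2}\notin H_0^1(\Omega)$ in the interior case ($H_0^1$-functions vanish quasi-everywhere on $K$) is not correct: since $\dim_{box}(K)<n-2$, the set $K$ has zero $H^1$-capacity, and indeed $E^t\in H_0^1(\Omega)$ for $0<t<\frac12$ even though $E^t=\infty$ on $K$. The obstruction is that $\int_\Omega|\nabla E^{1/2}|_A^2\,dx=\frac14 I(\frac12)=\infty$, which again comes from the integration-by-parts identity. Second, in your reduction for the optimality of $\frac12$ there is a small algebra slip: with $u=E^{1/2}v$ one has $\frac{u^2}{E}=v^2$, so the required inequality is $\int_\Omega E|\nabla v|_A^2\,dx\ge\varepsilon\int_\Omega v^2\,d\mu$, not $\varepsilon\int_\Omega Ev^2\,d\mu$; the subsequent construction would still need the divergence of $\int_\Omega E^{-1}|\nabla E|_A^2\,dx$ to drive the weighted capacity to zero, so it does not avoid the missing identity either.
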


\begin{remark} One can consider more general functions $E$. Most of the results (including the above one) concerning interior weights on $ \Omega$ can be generalized to the case where  $ \L_A(E) = \mu +h $, here $ \mu$ is again a nonnegative nonzero finite measure and $h$ is a suitably smooth non-negative function.   
\end{remark}

We begin by justifying (\ref{start}).

\begin{lemma} \label{startingpoint} (i) \quad Suppose $ E $ is an interior weight on $ \Omega$.  Then 
\begin{equation} \label{spint}
\int_\Omega | \nabla u|_A^2dx - \frac{1}{4} \int_\Omega \frac{ | \nabla E |_A^2}{E^2} u^2dx \ge \int_\Omega E | \nabla v|_A^2dx, 
\end{equation}
for all $ u \in C_c^{0,1}(\Omega \backslash K)$ and where $ v:=E^\frac{-1}{2} u$. \\
(ii) \quad Suppose $ E $ is a boundary weight on $ \Omega$.  Then 
\begin{equation} \label{spbound}
\int_\Omega | \nabla u|_A^2dx - \frac{1}{4} \int_\Omega \frac{ | \nabla E |_A^2}{E^2} u^2dx \ge \int_\Omega E | \nabla v|_A^2dx + \frac{1}{2} \int_\Omega \frac{u^2}{E} d \mu, 
\end{equation} for all $ u \in H_0^1(\Omega) $ and $ v:=E^\frac{-1}{2}u$.

\end{lemma}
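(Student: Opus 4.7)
The proof rests on the pointwise algebraic identity obtained by direct computation: for $E$ smooth and positive and $v := E^{-1/2} u$,
\[
| \nabla u |_A^2 - \frac{| \nabla E |_A^2}{4 E^2} u^2 = E | \nabla v |_A^2 + \frac{1}{2} A \nabla E \cdot \nabla (v^2),
\]
which was already noted in (\ref{start}); it follows from $\nabla u = \frac{1}{2} E^{-1/2} v \nabla E + E^{1/2} \nabla v$ by expanding $A \nabla u \cdot \nabla u$.

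For (i), take $u \in C_c^{0,1}(\Omega \setminus K)$. Its support is compact and disjoint from both $K$ and $\pOm$, and on this support $E \in C^\infty$ and is bounded below. The identity therefore holds classically there, and standard integration by parts gives
\[
\frac{1}{2} \int_\Omega A \nabla E \cdot \nabla (v^2) \, dx = \frac{1}{2} \int_\Omega v^2 \, \L_A(E) \, dx = 0,
\]
because $\L_A(E) \equiv 0$ pointwise on $\Omega \setminus K$ (as $\mu$ is concentrated on $K$), while $v^2$ is supported in $\Omega \setminus K$. Inserting this into the integrated identity yields (\ref{spint}) with equality.

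For (ii), first take $u \in C_c^\infty(\Omega)$. Since $E$ is bounded below on the support of $u$ (which is compactly contained in $\Omega$), the function $v^2 = u^2/E$ lies in $H_0^1(\Omega) \cap L^\infty(\Omega)$ with compact support in $\Omega$. The pointwise identity holds a.e., and the cross-term is interpreted through the distributional equation $-div(A \nabla E) = \mu$ by using $v^2$ as a test function:
\[
\frac{1}{2} \int_\Omega A \nabla E \cdot \nabla (v^2) \, dx = \frac{1}{2} \int_\Omega v^2 \, d\mu = \frac{1}{2} \int_\Omega \frac{u^2}{E} \, d\mu.
\]
This gives (\ref{spbound}) with equality. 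Extension to $u \in H_0^1(\Omega)$ is then by density: approximate $u$ by $u_k \in C_c^\infty(\Omega)$, write the inequality for each $u_k$, and pass to the limit using $H_0^1$-convergence on the left-hand side and Fatou's lemma on each non-negative term on the right-hand side.

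The main technical obstacle is making the pairing $\int v^2 \, d\mu$ in part (ii) rigorous, since $E$ is only assumed to lie in $H_0^1(\Omega)$ with $\L_A(E)$ a Radon measure, so neither $E$ nor $v^2$ need be continuous in the classical sense. The resolution is that $\L_A(E) = \mu \ge 0$ makes $E$ an $\L_A$-superharmonic function, which admits a lower semicontinuous representative; consequently $v^2$ has an upper semicontinuous representative with compact support in $\Omega$, which is a legitimate test function against the finite non-negative Radon measure $\mu$ by standard potential-theoretic/capacity arguments.
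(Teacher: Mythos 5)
Your part (i) is exactly the paper's argument: on the support of $u$ the weight $E$ is smooth and bounded below, the identity behind (\ref{start}) holds classically, and the measure term drops out because $\mu$ is supported on $K$ while $u$ vanishes near $K$. Your part (ii), however, takes a genuinely different route. The paper regularizes $E$ itself: it extends $E$ by zero, mollifies to get $E_\E$, writes the exact identity for $E_\E$ and $v_\E:=E_\E^{-1/2}u$ with $F_\E:=\L_A(E_\E)$ in place of $\mu$, and then passes to the limit $\E\searrow 0$ using $uF_\E\rightharpoonup u\mu$ in $H^{-1}(\Omega)$ together with Fatou's lemma, which is why the conclusion is stated as an inequality. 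You instead work directly with the singular $E$, observe that $v^2=u^2/E$ lies in $H_0^1(\Omega)\cap L^\infty(\Omega)$ with compact support (using that $E$ is bounded below on compact subsets by the strong maximum principle), and justify the pairing $\int v^2\,d\mu$ by potential-theoretic means. This is legitimate, and the cleanest way to phrase your justification is via quasi-continuity rather than semicontinuous representatives: since $E\in H_0^1(\Omega)$, the nonnegative measure $\mu=\L_A(E)$ lies in $H^{-1}(\Omega)$ and therefore charges no set of zero $H^1$-capacity, so the duality pairing $\langle\mu,\phi\rangle$ for bounded $\phi\in H_0^1(\Omega)$ equals $\int\tilde{\phi}\,d\mu$ with $\tilde{\phi}$ the quasi-continuous representative (for superharmonic $E$ the l.s.c.\ representative you invoke agrees with this q.e., so your version is consistent). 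What each approach buys: the paper's mollification sidesteps capacity theory entirely but requires checking several convergence statements for $E_\E$, $v_\E$, $F_\E$; your route gives the identity with equality for $u\in C_c^\infty(\Omega)$ in one step but leans on the Brezis--Browder-type pairing result, and your final density-plus-Fatou step should also note that $u_k\to u$ quasi-everywhere along a subsequence so that Fatou applies to the term $\int\frac{u_k^2}{E}\,d\mu$ with respect to $\mu$, not just Lebesgue measure.
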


\begin{proof} (i) \quad Since $E$ is smooth away from $K$ and noting the supports of both $u$ and $ v$ the integration by parts used in obtaining (\ref{start}) is valid.  \\ 
\noindent (ii) \quad Now suppose $ E $ in a boundary weight.  Extend $E$ to all of $ \IR^n$ by setting $E =0 $ outside of $ \Ov$ and let $ E_\E$ denote the $\E $ mollification of $E$.   Let $ u \in C_c^\infty(\Omega)$, $ v_\E:=E_\E^\frac{-1}{2} u$ and define $ F_\E:=\L_A(E_\E)$.    Now one easily obtains (\ref{start}) but with $ E $ and $ v$ replaced with $ E_\E, v_\E$.  Standard arguments show that $ u E_\E^{-1} \rightarrow u E^{-1}$ in $H_0^1(\Omega)$, $ | \nabla E_\E|_A^2 E_\E^{-2} \rightarrow | \nabla E|_A^2 E^{-2},  E_\E | \nabla v_\E|_A^2 \rightarrow E | \nabla v|_A^2 $ a.e. in $ \Omega$ and $ u F_\E \rightharpoonup u \mu  $ in $H^{-1}(\Omega)$.  Using these results along with Fatou's lemma allows us to pass to the limit.

\end{proof} 

\begin{remark} When we prove our various Hardy inequalities, which all stem from (\ref{start}) we will generally drop the term 
\[ \int_\Omega E \big| \nabla \left( \frac{u}{\sqrt{E}} \right)\big|_A^2 dx.\]   To show the given inequality does not attain we will generally just not drop this term.   This term is positive for non-zero $u$ provided $ u $ is not a multiple of $ \sqrt{E}$.   Since $ \sqrt{E} \notin H_0^1(\Omega)$ this will not be an issue.    In theorem \ref{alpha} this will be a concern.  

\end{remark}

As usual we will need an ample supply of test functions for best constant calculations.  The next lemma provides this. 
When $ E $ is an interior weight we let $ g $ denote a solution to $ \L_A(g)=0$ in $ \Omega$ with $ g=E$ on $ \pOm$. 

\begin{lemma} \label{test} Suppose $ E $ is an interior weight on $ \Omega$ and  $ 0 < \gamma:=\min_{\pOm} E$.  Then  \\
(i) \quad $ u_t:=E^t - g^t \in H_0^1(\Omega)$ for $ 0 < t < \frac{1}{2}$. \\
(ii) \qquad Define $ I(t):=\int_\Omega | \nabla E|_A^2 E^{2t-2}dx$.  Then $ I(t) $ is finite for $ t < \frac{1}{2}$ and $ I(t) \rightarrow \infty $ as $ t \nearrow \frac{1}{2}$.  \\
(ii) \quad Suppose $ E=\gamma >0 $ on $\pOm$.   Define $ v_{t,\tau}:=E^t \log^\tau( \gamma^{-1}E)$ and 
\[ J_t (\tau):= \int_\Omega E^{2t-2} | \nabla E|_A^2 \log^{2\tau-2}(\gamma^{-1}E) dx.\] 
Then $ v_{t,\tau} \in H_0^1(\Omega)$ for $ 0 < t < \frac{1}{2}$ and $ \tau > \frac{1}{2}$.  Moreover for each $ 0 < t  < \frac{1}{2}$ we have $ J_t(\tau) \rightarrow \infty$ as $ \tau \searrow \frac{1}{2}$.

\end{lemma}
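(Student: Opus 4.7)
I begin with the finiteness and blow-up of $I(t)$, since the same integration-by-parts template drives all three parts. Because $\L_A(E) = 0$ on $\Omega \setminus K$, testing against $\phi := E^{2t-1}$ on the truncated domain $\Omega_\epsilon := \Omega \setminus K_\epsilon$ (where $K_\epsilon := \{\mathrm{dist}(\cdot, K) < \epsilon\}$) and applying Green's identity gives
\[
(2t-1) \int_{\Omega_\epsilon} E^{2t-2} |\nabla E|_A^2 \, dx = \int_{\pOm} E^{2t-1} A\nabla E \cdot \nu \, dS - \int_{\partial K_\epsilon} E^{2t-1} A\nabla E \cdot \nu_K \, dS,
\]
where $\nu_K$ denotes the outward normal to $K_\epsilon$. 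The second boundary integral must be shown to vanish as $\epsilon \searrow 0$; this is the main technical step and uses $\dim_{\mathrm{box}}(K) < n-2$ to control the surface measure of $\partial K_\epsilon$, so that the small factor $E^{2t-1} \to 0$ (valid for $t < \tfrac{1}{2}$) dominates the a priori blowup of $|\nabla E|_A$ there. The divergence theorem applied to $A\nabla E$ on $\Omega_\epsilon$ (with $\L_A E = 0$ inside) identifies $\int_{\pOm} A\nabla E \cdot \nu\, dS = -\mu(\Omega) < 0$. Passing to the limit $\epsilon \searrow 0$ yields
\[
I(t) = \frac{1}{2t-1} \int_{\pOm} E^{2t-1} A\nabla E \cdot \nu \, dS,
\]
a finite positive number for $t < \tfrac{1}{2}$. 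As $t \nearrow \tfrac{1}{2}$, dominated convergence sends the boundary integral to $-\mu(\Omega) < 0$ while $2t-1 \to 0^-$, so $I(t) \to +\infty$.

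With $I(t) < \infty$ in hand, claim (i) follows. Elliptic regularity for $\L_A g = 0$ with smooth data gives $g \in C^\infty(\bar\Omega)$, and the maximum principle yields $g \geq \gamma > 0$; thus $g^t$ is smooth and bounded on $\bar\Omega$. The identity $u_t = E^t - g^t$ vanishes on $\pOm$ by construction. The bound
\[
|\nabla u_t|_A^2 \leq 2 t^2 E^{2t-2} |\nabla E|_A^2 + 2 t^2 g^{2t-2} |\nabla g|_A^2
\]
together with $I(t) < \infty$ gives $\nabla u_t \in L^2$. Moreover, $u_t \in L^2(\Omega)$ follows from $E \in L^1(\Omega)$ (a standard estimate for the Newtonian-type potential of a finite measure with low-dimensional support) combined with $E^{2t} \leq \max(E,1)$ for $t < \tfrac{1}{2}$. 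To upgrade $u_t$ from $H^1$ with pointwise zero trace to $H_0^1$, I truncate: $\min(u_t, N) \in C(\bar\Omega)$ vanishes on $\pOm$ and thus lies in $H_0^1(\Omega)$, and dominated convergence sends these to $u_t$ in $H^1$.

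The third claim on $v_{t,\tau}$ and $J_t(\tau)$ runs the same integration-by-parts with the test function $\phi := E^{2t-1} \log^{2\tau - 1}(\gamma^{-1} E)$. Under the additional hypothesis $E = \gamma$ on $\pOm$, $\log(\gamma^{-1}E) = 0$ there, so the $\pOm$ boundary contribution vanishes entirely; the $\partial K_\epsilon$ contribution vanishes by the same box-dimension argument. The result is the identity
\[
(1 - 2t) K_t(\tau) = (2\tau - 1) J_t(\tau), \qquad K_t(\tau) := \int_\Omega E^{2t-2} \log^{2\tau - 1}(\gamma^{-1}E) |\nabla E|_A^2 \, dx.
\]
As $\tau \searrow \tfrac{1}{2}$, dominated convergence gives $K_t(\tau) \to I(t) > 0$, while $\tfrac{1 - 2t}{2\tau - 1} \to +\infty$, proving $J_t(\tau) \to +\infty$. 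Membership $v_{t,\tau} \in H_0^1(\Omega)$ follows by expanding $|\nabla v_{t,\tau}|_A^2 = f'(E)^2 |\nabla E|_A^2$ with $f(s) := s^t \log^\tau(\gamma^{-1}s)$ into a linear combination of integrals of the form $J_t(\sigma)$ for $\sigma \in \{\tau, \tau + \tfrac{1}{2}, \tau + 1\}$, each finite for $t < \tfrac{1}{2}$ and $\tau > \tfrac{1}{2}$ by the same scheme. The $L^2$-bound on $v_{t,\tau}$ mirrors part (i), and the zero-trace now uses $\log(\gamma^{-1}E) = 0$ on $\pOm$ together with the truncation trick. The main obstacle throughout is the verification that $\int_{\partial K_\epsilon}(\cdot)\,dS$ vanishes in each integration by parts, which is where the hypothesis $\dim_{\mathrm{box}} K < n-2$ carries decisive weight.
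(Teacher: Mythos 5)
Your strategy for (i) and (ii) --- pair $\L_A(E)=\mu$ with the test function $E^{2t-1}$, note that the measure term vanishes because $E=\infty$ on $K$ and $2t-1<0$, and read off both the finiteness and the blow-up of $I(t)$ from the surviving flux over $\pOm$ --- is exactly the paper's. The genuine gap is the step you yourself flag as the main technical one: the claim that $\int_{\partial K_\E}E^{2t-1}A\nabla E\cdot\nu_K\,dS\to 0$ because $dim_{box}(K)<n-2$ controls the surface measure while $E^{2t-1}\to0$ beats the blow-up of $|\nabla E|_A$. The box-dimension hypothesis controls $\mathcal{H}^n(K_\E)$, not $\mathcal{H}^{n-1}(\partial K_\E)$, and nothing in the hypotheses bounds $\int_{\partial K_\E}|A\nabla E|\,dS$; the only quantity you actually control is the signed flux $\int_{\partial K_\E}A\nabla E\cdot\nu_K\,dS=-\mu(\Omega)$, and since $A\nabla E\cdot\nu_K$ need not have a sign on a distance-level set you cannot pull $\sup_{\partial K_\E}E^{2t-1}$ out of the integral. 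Two standard repairs: excise superlevel sets $\{E>M\}$ instead of $K_\E$, so that $E^{2t-1}\equiv M^{2t-1}$ is constant on the inner boundary and the inner integral equals $-M^{2t-1}\mu(\Omega)\to0$ as $M\to\infty$; or do what the paper announces, namely mollify $\mu$, integrate by parts for the smooth approximations, and pass to the limit.

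For part (iii) you take a genuinely different route. Testing against $\phi:=E^{2t-1}\log^{2\tau-1}(\gamma^{-1}E)$ gives $(2\tau-1)J_t(\tau)=(1-2t)K_t(\tau)$ with $K_t(\tau):=\int_\Omega E^{2t-2}\log^{2\tau-1}(\gamma^{-1}E)|\nabla E|_A^2\,dx$, and since the logarithm enters $K_t$ with a nonnegative exponent, Fatou gives $\liminf_{\tau\searrow 1/2}K_t(\tau)\ge I(t)>0$, whence $J_t(\tau)\to\infty$; the same identity delivers the finiteness of $J_t(\tau)$ needed for $v_{t,\tau}\in H_0^1(\Omega)$. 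The paper instead checks integrability near $K$ by absorbing the logarithm into a small power of $E$, checks integrability near $\pOm$ by the co-area formula, and proves divergence of $J_t(\tau)$ via Hopf's lemma and the lower bound $\tilde{C}\int_1^{1+\E/\gamma}s^{2t-2}\log^{2\tau-2}(s)\,ds$. Your identity is more economical, but it requires justifying a second delicate integration by parts: $\nabla\phi$ carries a $\log^{2\tau-2}(\gamma^{-1}E)$ factor that blows up on $\pOm$, so one should work on $\{E>\gamma+\delta\}$ and let $\delta\searrow0$ (where the boundary term does tend to $0$), and the inner boundary at $K$ raises the same issue as above. Once those are closed your argument is correct.
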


\begin{proof} We prove the results up to some unjustified integration by parts; which can be justified by regularizing the measure, integrating by parts and passing to limits.   \\
\noindent (i), (ii) \quad  Fix $ 0 < t < \frac{1}{2}$ and then note that $ | \nabla u_t|^2 \le C E^{2t-2} | \nabla E|_A^2 + C g^{2t-2} | \nabla g|_A^2 $ where $C$ is some uniform constant.  The term involving $ g $ is  harmless.  Now multiply $ \L_A(E) = \mu $ by $ E^{2t-1}$ and integrate over $ \Omega $ to obtain
\begin{eqnarray*}
(1-2t) \int_\Omega E^{2t-2} | \nabla E|_A^2dx&=& - \int_{\pOm} g^{2t-1} (A \nabla E) \cdot \nu  \\
&=& \E(t) - \int_{\pOm} ( A \nabla E) \cdot \nu  \\
&=& \E(t) - \int_\Omega div(A \nabla E)dx \\
&=& \E(t) + \mu(\Omega),
\end{eqnarray*} where $ \E(t) \rightarrow 0 $ as $ t \nearrow \frac{1}{2}$.   Note $ \int_\Omega E^{2t-1} d \mu =0 $ since $ t < \frac{1}{2}$ and $ E = \infty $ on $K$.    From this we see that $I(t)= \int_\Omega | \nabla E|_A^2 E^{2t-2}dx< \infty$ and so $ u_t \in H_0^1(\Omega)$.   We also see that $ \lim_{t \nearrow \frac{1}{2}}I(t)=\infty$. \\
\noindent (iii) \quad Take $ 0 < t < \frac{1}{2}$, $ \tau > \frac{1}{2}$ and $ v_{t,\tau}$ defined as above.  One easily sees that $ v_{t, \tau}$ is continuous near $ \pOm$ and vanishes on $ \pOm$.  So to show $ v_{t,\tau } \in H_0^1(\Omega)$ it is sufficient to show 
\[  w_1:=E^{2t-2} | \nabla E|^2 \log^{2\tau} (\gamma^{-1}E), \; w_2:= E^{2t-2} | \nabla E|^2 \log^{2\tau-2}(\gamma^{-1}E) \in L^1(\Omega).\]  These functions are only singular near $ K $ and $ \pOm$.  Now set $ W_\tau:= E^{2t-2} | \nabla E|^2 \log^{2 \tau-2}(\gamma^{-1}E) $ and so $ w_2 = W_\tau$ and $ w_1=W_{\tau+1}$.  Now suppose $ t' \in (t,\frac{1}{2})$ and so 
\[ W_{\tau+1} =E^{2 t'-2} | \nabla E|^2 \frac{ \log^{2 \tau}(\gamma^{-1} E)}{ E^{2t'-2t}} \le C E^{2t'-2} | \nabla E|^2 \qquad \mbox{ near $K$ }, \] and so $ w_1 = W_{\tau+1} \in L^1(K_\E)$ where $ K_\E$ is a small neighborhood of $K$.  Now note that $ w_2 $ is better behaved than $ w_1 $ near $K$ and so we also have $ w_2 \in L^1(K_\E)$. \\ 
Define $ \Omega_\E:= \{ x \in \Omega: E(x) < \gamma +\E \}$ and take $ \E >0$ sufficiently small such that $ K \subset \Omega \backslash \Omega_{2 \E}$.  Now using the co-area formula we have 
\begin{eqnarray*}
\int_{\Omega_\E} E^{2t-2} | \nabla E|^2 \log^{2\tau-2}(\gamma^{-1}E) dx& \le & \sup_{\Omega_\E} | \nabla E| \int_{\Omega_\E} E^{2t-2} \log^{2\tau-2} ( \gamma^{-1} E) | \nabla E|dx  \\
& \le & C \int_1^{1+ \frac{\E}{\gamma}} s^{2t-2} \log^{2 \tau-2}(s) ds, 
\end{eqnarray*} which is finite for $ \tau > \frac{1}{2}$.    So we see that $ w_2 \in L^1(\Omega_\E)$ for sufficiently small $ \E>0$ and noting that $ w_1 $ is better behaved near $ \pOm$ than $ w_2$ we have the same for $ w_1$.   Combining these results we see that $ v_{t,\tau} \in H_0^1(\Omega)$. \\ Fix $ 0 < t < \frac{1}{2}$ and $ \tau > \frac{1}{2}$.  By Hopf's lemma we have $ | \nabla E(x) | $ bounded away from zero on $ \Omega_\E$ for $ \E >0$ sufficiently small; fix $ \E>0$ sufficiently small.   Then 
\begin{eqnarray*}
J_t(\tau) & \ge & C \int_{ \Omega_\E} E^{2t-2} \log^{2\tau-2}(\gamma^{-1}E) | \nabla E|dx \\ 
& \ge & \tilde{C} \int_1^{1 + \frac{\E}{\gamma}} s^{2t-2} \log^{2\tau-2}(s) ds, 
\end{eqnarray*}  and a computation shows the last integral becomes unbounded as $ \tau \searrow \frac{1}{2}$.

\end{proof}

 \noindent \textbf{Proof of theorem \ref{first}:}   
(i) \quad Using lemma \ref{startingpoint} and, in the case where $E$ is a interior weight on $ \Omega$, the fact that $ C_c^{0,1}(\Omega \backslash K)$ is dense in $H_0^1(\Omega)$ we obtain (\ref{hardy}).  We now show the constant is optimal.   Suppose $ E $ is an interior weight on $ \Omega$ and define $ E_\E:=\E+E$, $ g_\E:=\E +g$ where $ \E>0$.  Define $ I_\E(t):= \int_\Omega | \nabla E_\E|_A^2 E_\E^{2t-2}dx$.  As in the proof of lemma \ref{test} one can show that for each $ \E >0$ $ \lim_{t \nearrow \frac{1}{2}} I_\E(t) = \infty$.   We use $ u_{t,\E}:=E_\E^t - g_\E^t$ as test functions.  Let $ 0 < t < \frac{1}{2}$ and $ \E>0$.  Then 
\begin{eqnarray*}
Q_{t,\E}:= \frac{\int_\Omega  | \nabla u_{t, \E} |_A^2dx}{  \int_\Omega \frac{ | \nabla E_\E|_A^2}{E_\E^2} u_{t,\E}^2dx} & \le &  \frac{t^2 I_\E(t) + C_0 +C_1 \sqrt{I_\E(t)} }{ I_\E(t) - C_2 I_\E( \frac{t}{2}) - C_3 I_\E(0) }, \\
\end{eqnarray*} where the constants $ C_k$ possibly depend on $ \E$.     From this we see that $ \lim_{t \nearrow \frac{1}{2}} Q_{t,\E} = \frac{1}{4}$ after recalling $ Q_{t,\E} \ge \frac{1}{4}$.     Now fix $ \E>0$ and let $ u \in C_c^\infty(\Omega)$ be non-zero.   Then a simple computation shows 
\[ \frac{ \int_\Omega | \nabla u|_A^2dx }{ \int_\Omega \frac{| \nabla E|_A^2}{E^2} u^2dx} \le \frac{ \int_\Omega | \nabla u|_A^2dx }{ \int_\Omega \frac{| \nabla E_\E|_A^2}{E_\E^2} u^2dx}, \] which, when combined with the above facts, gives the desired best constant result.    To see $ \frac{1}{4}$ is not attained use (\ref{spint}).  \\
Now suppose  $ E $ is a boundary weight on $ \Omega$, $ \E>0$ and $ t > \frac{1}{2}$.    Define $ f_\E(z):= z^{2t-1}-\E^{2t-1}$ for $ z >\E$ and $0$ otherwise.    Using $ f_\E(E) \in H_0^1(\Omega)$ as a test function in the pde associated with $E$ one obtains, after sending $ \E \searrow 0$,
\begin{equation} \label{hhhhh}
 (2t-1) \int_\Omega E^{2t-2} | \nabla E|_A^2dx = \int_\Omega E^{2t-1} d \mu, 
 \end{equation}
  which shows that $ E^t \in H_0^1(\Omega)$ for $ \frac{1}{2} < t \le 1$.  To see $ \frac{1}{4}$ is optimal in (\ref{hardy}) use $ E^t $ (as $ t \searrow \frac{1}{2}$) as a minimizing sequence.  \\ 
\noindent (ii) \quad  Suppose $ E $ is a boundary weight on $ \Omega$.   Let $ \frac{1}{2} < t < 1 $ and so $ E^t \in H_0^1(\Omega)$.  Using (\ref{hhhhh}) we have 
\[ \frac{\int_\Omega | \nabla E^t|_A^2dx - \frac{1}{4} \int_\Omega \frac{| \nabla E|_A^2}{E^2} (E^t)^2}{ \int_\Omega \frac{(E^t)^2}{E} d \mu } = \frac{t}{2} + \frac{1}{4}, \] which shows that $ \frac{1}{2}$ is optimal.  \\
\noindent (iii) \quad Suppose $ E$  is as in the hypothesis.  The only issue is whether $ \frac{1}{4}$ is optimal. Without loss of generality assume that $ 0 \in \pOm$ and $ B(0,2R) \cap \pOm \subset \Gamma$.     Suppose $ 0 < r <R $ and define  
\begin{equation*}
\phi(x):= \left\{ \begin{array}{ll}1 & \qquad x \in \Omega(r) \\
\frac{R-|x|}{R-r} & \qquad x \in \Omega(R) \backslash \Omega(r) \\
0 & \qquad x \in \Omega \backslash \Omega(R),
\end{array}
\right.
\end{equation*} where $ \Omega(r):= B(0,r) \cap \Omega$.  Define $ u_t:=E^t \phi$ which can be shown to be an element of $H_0^1(\Omega)$ for $  t > \frac{1}{2}$.  One uses $ u_t $ as $ t \searrow \frac{1}{2}$ as a minimizing sequence along with arguments similar to the above to show $ \frac{1}{4}$ is optimal.

\hfill $ \Box$

 The following example shows that if we just assume that $ 0 < E \in H_0^1(\Omega)$ with $ \L_A(E)$ a locally finite measure then (\ref{hardy}) need not be optimal.
 \begin{exam} Take $ \Omega $ a bounded convex domain in $ \IR^n$ and set $ \delta(x):=dist(x,\pOm)$.  Fix $ \frac{1}{2} < t < 1 $ and set $ E:=\delta^t \in H_0^1(\Omega)$.    Then 
 \[ \frac{| \nabla E|^2}{E^2} = \frac{t^2}{\delta^2}, \qquad \mu:=-\Delta E = t(1-t) \delta^{t-2} + t \delta^{t-1} (-\Delta \delta) \ge 0 \qquad \mbox{ in $ \Omega$}, \] and so putting $E$ into (\ref{hardy}) gives 
 \[ \int_\Omega | \nabla u|^2dx \ge \frac{t^2}{4} \int_\Omega \frac{u^2}{\delta^2} dx, \] for $ u \in H_0^1(\Omega)$.   This shows that (\ref{hardy}) was not optimal.  This apparent failure of theorem \ref{first} is due to the fact $ \mu$ not a finite measure; use the co-area formula to show $ \delta^{t-2} \notin L^1(\Omega)$. 
 
 \end{exam}

 We now give an alternate way to view best constants in (\ref{extra}).   Define $ \mathcal{C} $ to be the set of $ (\beta, \alpha) \in \IR^2$ such that 
 \begin{equation} \label{alphabeta}
 \int_\Omega | \nabla u|_A^2dx \ge \alpha \int_\Omega \frac{| \nabla E|_A^2}{E^2} u^2dx + \beta \int_\Omega \frac{u^2}{E} d \mu, \qquad    u \in H_0^1(\Omega). 
 \end{equation}

 \begin{thm} \label{alpha}  Suppose $ E \in L^\infty(\Omega)$ is a boundary weight on  $\Omega$.  Then 
 \[ \mathcal{C}= \left\{ ( \beta , \alpha): \beta > \frac{1}{2}, \alpha \le \beta - \beta^2 \right\} \cup \left( - \infty, \frac{1}{2} \right] \times \left( - \infty, \frac{1}{4} \right]=:\mathcal{C}'. \]  Moreover  (\ref{alphabeta})  does attain on $ \Gamma:=\{ (\tau, \tau-\tau^2): \tau> \frac{1}{2} \} \subset \partial \mathcal{C}$ and does not attain on $ \partial \mathcal{C} \backslash \Gamma$.

 \end{thm}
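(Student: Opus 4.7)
The plan I would take is the following. The centerpiece will be a one-parameter extension of the identity in Lemma \ref{startingpoint}(ii). For each $\tau \in \IR$ and $u \in H_0^1(\Omega)$, I will substitute $w := E^{-\tau} u$ into $|\nabla u|_A^2$ and, after integrating the cross term by parts against $\L_A(E) = \mu$, obtain the master identity
\[\int_\Omega |\nabla u|_A^2 dx - (\tau - \tau^2) \int_\Omega \frac{|\nabla E|_A^2}{E^2} u^2 dx - \tau \int_\Omega \frac{u^2}{E} d\mu = \int_\Omega E^{2\tau} |\nabla w|_A^2 dx \ge 0,\]
which collapses to (\ref{spbound}) when $\tau = 1/2$. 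The justification for $\tau > 1/2$ and $u \in H_0^1(\Omega)$ will mimic Lemma \ref{startingpoint}(ii) by mollifying $E$ and passing to the limit; the hypothesis $E \in L^\infty(\Omega)$ enters precisely so that the factor $E^{2\tau}$ stays bounded when $\tau \ge 1$.

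For the inclusion $\mathcal{C}' \subset \mathcal{C}$ I will discard the nonnegative remainder $\int E^{2\tau}|\nabla w|_A^2$. If $\beta > 1/2$ and $\alpha \le \beta - \beta^2$, I will apply the master identity with $\tau = \beta$ and combine the resulting inequality with $\alpha \le \beta - \beta^2$ and the nonnegativity of $\int |\nabla E|_A^2/E^2 u^2$. If $\beta \le 1/2$ and $\alpha \le 1/4$, I will instead use (\ref{extra}) and the nonnegativity of both right-hand integrals.

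For the reverse inclusion I will test with $u_t := E^t$ for $t > 1/2$ (which lies in $H_0^1(\Omega)$ by the computation leading to (\ref{hhhhh})). Plugging $w \equiv 1$ into the master identity makes it an exact equality, so the deficit for (\ref{alphabeta}) at $u_t$ reads
\[(t - t^2 - \alpha) \int_\Omega E^{2t-2} |\nabla E|_A^2 dx + (t - \beta) \int_\Omega E^{2t-1} d\mu.\]
When $\beta > 1/2$ and $\alpha > \beta - \beta^2$, I will choose $t = \beta$: the second term vanishes and the first is a negative multiple of a positive integral, so (\ref{alphabeta}) fails. When $\alpha > 1/4$ (with any $\beta \le 1/2$), I will let $t \searrow 1/2$: the leading coefficient tends to $1/4 - \alpha < 0$, the Hardy integral blows up (Lemma \ref{test}(ii), or directly from (\ref{hhhhh}) as $\int E^{2t-1} d\mu \to \mu(\Omega) > 0$), while $\int E^{2t-1} d\mu$ stays bounded because $E \in L^\infty$ and $\mu$ is finite; the deficit therefore tends to $-\infty$.

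For attainment on $\Gamma$ the same choice $u = E^\tau$ with $\tau > 1/2$ has $w \equiv 1$, so the remainder in the master identity vanishes identically and equality holds in (\ref{alphabeta}) at $(\beta, \alpha) = (\tau, \tau - \tau^2)$ with $u \in H_0^1(\Omega)$ nonzero. For non-attainment on $\partial \mathcal{C} \setminus \Gamma = \{(\beta, 1/4) : \beta \le 1/2\}$, I will invoke (\ref{spbound}): equality in (\ref{alphabeta}) forces $\nabla v \equiv 0$ (with $v = E^{-1/2} u$), hence $u = c \sqrt{E}$, but $\sqrt{E} \notin H_0^1(\Omega)$ since $\int |\nabla E|_A^2/(4E) = \infty$ (take $t \searrow 1/2$ in (\ref{hhhhh})); so $c = 0$ and $u \equiv 0$. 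The main technical obstacle in the whole program is the rigorous justification of the master identity for $u \in H_0^1(\Omega)$ by a careful mollification that accommodates both the $E^{-\tau}u$ rescaling and the measure-valued divergence; once this is in place, everything else reduces to direct computation with the minimizing family $E^t$.
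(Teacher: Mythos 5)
Your proposal is correct and follows essentially the same route as the paper: your ``master identity'' with $w=E^{-\tau}u$ is exactly the paper's inequality (\ref{zop}) (obtained there by substituting $E^t$ into (\ref{uu})), and the test functions $E^t$, the use of (\ref{hhhhh}) to force the deficit to $-\infty$ as $t\searrow\frac12$ when $\alpha>\frac14$, the attainment at $u=E^\tau$ on $\Gamma$, and the non-attainment via $\sqrt{E}\notin H_0^1(\Omega)$ all match the paper's argument.
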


 \begin{proof} Using similar arguments to the above one can show that $ E^t \in H_0^1(\Omega)$ for all $ t > \frac{1}{2}$.   Suppose $ (\beta, \alpha) \in \mathcal{C}$.  If $ \beta > \frac{1}{2}$ then testing (\ref{alphabeta}) on $ u:=E^\beta$ shows that $ \alpha \le \beta - \beta^2$.   If $ \beta \le \frac{1}{2}$ then testing (\ref{alphabeta}) on $ u:=E^t $ and sending $ t \searrow \frac{1}{2}$  shows that $ \alpha \le \frac{1}{4}$.     Now for the other inclusion.   \\
  
 Fix $ t \ge 1 $ and put $ E_2:= E^t$.   Then we have 
  \begin{equation*}  \frac{ | \nabla E_2 |_A^2}{E_2^2}= t^2 \frac{ | \nabla E|_A^2}{E^2}, \qquad \frac{\L_A(E_2)}{E_2} = t(1-t) \frac{ | \nabla E|_A^2}{E^2} + t \frac{ \L_A(E)}{E}.
  \end{equation*}  Putting $ E=E_2 $ into (\ref{uu}) we obtain
  \begin{equation} \label{zop} \int_\Omega | \nabla u|_A^2dx \ge ( \frac{t}{2} - \frac{t^2}{4}) \int_\Omega \frac{ | \nabla E|_A^2}{E^2} u^2 dx + \frac{t}{2} \int_\Omega \frac{u^2}{E} d \mu, 
  \end{equation}
    and so we see that $ ( \frac{t}{2},  \frac{t}{2} - \frac{t^2}{4} ) \in \mathcal{C} $ for all $ t \ge 1$.   From this we see that the curve $ \alpha = \beta - \beta^2 $ for $ \beta \ge \frac{1}{2}$ is contained in $ \mathcal{C}$.    It is straightforward to see the remaining portion of $ \partial \mathcal{C}'$ is contained in $ \mathcal{C}$.  \\ To see the inequality does not attain when $ (\beta, \alpha) \in \partial \mathcal{C} \backslash \Gamma$ use the fact that (\ref{u}) does not attain in $H_0^1$ and the fact that $ \mu \ge 0$.    To see the inequality does attain on the remaining portion of $ \partial \mathcal{C}$ note that (\ref{zop}) attains at $ u:=E^\frac{t}{2} \in H_0^1(\Omega)$ for $ t > 1$. 
 \end{proof}  
 
 We now give a result relating to the first eigenvalue of $\L_A$ on subdomains of $ \Omega$.  
 Suppose $(E,\lambda_A(\Omega))$ is the first eigenpair (with $E>0$) of $ \L_A $ on $H_0^1(\Omega)$ and for $ B \subset \Omega$ we let $ \lambda_A(B)$ denote the first eigenvalue of $ \L_A $ on $H_0^1(B)$.

 \begin{cor} Let $E$ be as above.  For $B \subset \Omega$ we set 
 \[ \underline{\alpha}(B):= \inf_B \frac{| \nabla E|_A^2}{E^2}, \qquad  \overline{\alpha}(B):= \sup_B \frac{| \nabla E|_A^2}{E^2}.\]
 \flushleft 
 (i) \quad If $ \underline{\alpha}(B) > \lambda_A(\Omega)$ then 
 \[ 4 \lambda_A(B) \ge \frac{ (\underline{\alpha}(B)+\lambda_A(\Omega))^2}{\underline{\alpha}(B)}.\] 
 (ii) \quad If $ \lambda_A(\Omega) > \overline{\alpha}(B)$ then 
 \[ 4 \lambda_A(B) \ge \frac{ (\overline{\alpha}(B)+\lambda_A(\Omega))^2}{\overline{\alpha}(B)}. \]

 \end{cor}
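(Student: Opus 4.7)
The central observation will be that the first eigenfunction $E$ itself qualifies as a boundary weight in the sense of Definition 1.1, with $\mu = \L_A(E) = \lambda_A(\Omega) E\,dx$; indeed $E \in H_0^1(\Omega) \cap L^\infty(\Omega)$, $E > 0$ in $\Omega$, and $\mu$ is a finite, nonnegative, nonzero measure (absolutely continuous, with density in $L^1$). This puts us squarely in the setting of Theorem~\ref{alpha}, so for every admissible pair $(\beta,\alpha) \in \mathcal{C}$ and every $u \in H_0^1(\Omega)$ we obtain
\[
\int_\Omega |\nabla u|_A^2 \, dx \;\ge\; \alpha \int_\Omega \frac{|\nabla E|_A^2}{E^2} u^2 \, dx + \beta \lambda_A(\Omega) \int_\Omega u^2 \, dx,
\]
after noting that the factor $E$ in $d\mu$ cancels the $1/E$ in the $\mu$-integral.

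Next I would test this inequality on $u \in H_0^1(B)$ extended by zero. On the support of such a $u$, the weight $|\nabla E|_A^2/E^2$ lies between $\underline{\alpha}(B)$ and $\overline{\alpha}(B)$; using the appropriate bound depending on the sign of $\alpha$ produces
\[
\lambda_A(B) \;\ge\; \alpha\, \alpha^*(B) + \beta\, \lambda_A(\Omega),
\]
where $\alpha^*(B) = \underline{\alpha}(B)$ when $\alpha \ge 0$ (relevant for case (i)) and $\alpha^*(B) = \overline{\alpha}(B)$ when $\alpha \le 0$ (relevant for case (ii)). What remains is a one-variable optimization: maximize the right-hand side over $(\beta,\alpha)$ on the extremal parabola $\alpha = \beta - \beta^2$ sitting in $\partial \mathcal{C}$, restricted to $\beta \in [1/2,1]$ in case (i) (so that $\alpha \ge 0$) and to $\beta \ge 1$ in case (ii) (so that $\alpha \le 0$).

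Setting the derivative of $f(\beta) := (\beta-\beta^2)\alpha^*(B) + \beta \lambda_A(\Omega)$ to zero yields the critical point $\beta^* = (\alpha^*(B)+\lambda_A(\Omega))/(2\alpha^*(B))$. The hypotheses $\underline{\alpha}(B) > \lambda_A(\Omega)$ in (i), respectively $\lambda_A(\Omega) > \overline{\alpha}(B)$ in (ii), are exactly what is needed to place $\beta^*$ in the admissible range; a short algebraic simplification then gives $f(\beta^*) = (\alpha^*(B) + \lambda_A(\Omega))^2/(4\alpha^*(B))$, which is the claimed bound after multiplying by $4$. The only place where any care is needed is in verifying that $E$ meets the boundary-weight hypotheses of Theorem~\ref{alpha} and that $\mu$ integrates against $u^2/E$ to give $\lambda_A(\Omega)\|u\|_{L^2}^2$; everything after that reduces to the elementary one-variable calculus sketched above.
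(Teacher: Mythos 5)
Your proposal is correct and follows essentially the same route as the paper: the paper's proof tests the one-parameter family of inequalities (\ref{zop}) (which is exactly your parabola $\alpha=\beta-\beta^2$ under the substitution $\beta=t/2$, with $d\mu=\lambda_A(\Omega)E\,dx$) on normalized functions in $H_0^1(B)$, bounds the weight $|\nabla E|_A^2/E^2$ by its infimum or supremum over $B$ according to the sign of the coefficient, and optimizes over the parameter at the same critical point $t^*=1+\lambda_A(\Omega)/\alpha^*(B)$ ($=2\beta^*$). The only cosmetic difference is that you route the argument through the set $\mathcal{C}$ of Theorem \ref{alpha} rather than quoting (\ref{zop}) directly.
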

 
 \begin{proof} Let $ B \subset \Omega$ and let $ u \in C_c^\infty(B)$ with $ \int_Bu^2=1$.  Using (\ref{zop}) gives 
 \[ 2 \int_B | \nabla u|_A^2dx \ge ( t - \frac{t^2}{2}) \inf_B \frac{| \nabla E|_A^2}{E^2} + \lambda_A(\Omega)t, \] for $ 0 < t < 2$. If $ t >2 $ then we get the same expression but with the infimum replaced with supremum.   Now take the infimum over $u$ and in case (i) set $ t:= 1 + \frac{ \lambda_A(\Omega)}{ \underline{\alpha}(B)} <2$ and in case (ii)  set $ t:=  1 + \frac{ \lambda_A(\Omega)}{ \overline{\alpha}(B)} >2 $ to see the result.
 
 \end{proof}

 \subsection{Weighted versions}
  We now examine weighted versions of the above inequalities which, as mentioned earlier, can be seen as analogs of Cafferelli-Kohn-Nirenberg inequalities.     We now introduce the spaces we work in.   
  
  \begin{dfn} For $ t \in \IR$ we define $ \| u \|_t^2:= \int_\Omega E^{2t} | \nabla u|_A^2 dx$.    \\
  Suppose $ E$ is an interior weight on $ \Omega$.  We define $ X_t$ to be the completion of $ C_c^{0,1}(\Omega \backslash K)$ with respect to $ \| \cdot \|_t$.    In the case that $E$ is a boundary weight on $ \Omega$ we define $ X_t$ to be the completion of $ C_c^{0,1}(\Omega)$ with respect to the same norm.

  \end{dfn}  
  
  \begin{remark}   One should note that if $E$ is an interior weight on $ \Omega$ and $ t> \frac{1}{2}$ then $ X_t$ does not contain $C_c^\infty(\Omega)$.  To see this use (\ref{weight1}) to see that if $C_c^\infty(\Omega) \subset X_t$ then $ E^t \in H^1_{loc}(\Omega)$ which we know to be false.    For $ t < \frac{1}{2}$ we do have $ C_c^\infty(\Omega) \subset X_t$. 
  
  \end{remark}

  \begin{thm} \label{weight1th} Suppose  $ t \neq \frac{1}{2}$ and $ E $ an interior weight on $ \Omega$.  Then 
  \begin{equation} \label{weight1}
  \int_\Omega E^{2t} | \nabla u|_A^2dx \ge ( t - \frac{1}{2} )^2 \int_\Omega | \nabla E|_A^2 E^{2t-2} u^2dx, 
  \end{equation} for all $ u \in X_t$.  Moreover  the constant is optimal and not attained.
  
  \end{thm}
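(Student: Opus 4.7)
The plan is to establish the identity
\[
\int_\Omega E^{2t}|\nabla u|_A^2\,dx = (t-\tfrac{1}{2})^2 \int_\Omega E^{2t-2}|\nabla E|_A^2 u^2\,dx + \int_\Omega E|\nabla w|_A^2\,dx
\]
for $u \in C_c^{0,1}(\Omega \setminus K)$ via the substitution $w := E^{t-1/2} u$. Expanding $\nabla u = E^{1/2-t}\nabla w + (\tfrac{1}{2}-t)E^{-1/2-t}w\nabla E$ and multiplying by $E^{2t}$ gives the pointwise identity
\[
E^{2t}|\nabla u|_A^2 = E|\nabla w|_A^2 + (\tfrac{1}{2}-t)\,A\nabla E \cdot \nabla(w^2) + (t-\tfrac{1}{2})^2 E^{2t-2}|\nabla E|_A^2 u^2.
\]
Since $u$ is Lipschitz with compact support in $\Omega\setminus K$, so is $w$, and on this support $E$ is smooth and bounded away from $0$; integration by parts then gives $\int_\Omega A\nabla E \cdot \nabla(w^2)\,dx = \int_\Omega w^2\,d\mu = 0$, because $\mu$ is supported on $K$ where $w$ vanishes. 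The identity follows, and the inequality (\ref{weight1}) results from the nonnegativity of $\int_\Omega E|\nabla w|_A^2\,dx$. By definition $C_c^{0,1}(\Omega\setminus K)$ is dense in $X_t$, so the inequality extends to all of $X_t$.

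For non-attainment, if equality holds for some $u \in X_t \setminus \{0\}$, then $\int_\Omega E|\nabla w|_A^2\,dx = 0$ forces $w$ to be constant on the connected open set $\Omega \setminus K$ (connectedness follows from $\dim_{box}(K)<n-2$), so $u = cE^{1/2-t}$ for some $c \neq 0$. Near $\partial\Omega$ the weight $E$ is bounded between positive constants, hence $\|\cdot\|_t$ controls the ordinary Dirichlet integral on any collar of $\partial\Omega$. Consequently every element of $X_t$, being a $\|\cdot\|_t$-limit of functions vanishing near $\partial\Omega$, has zero trace there; this contradicts $cE^{1/2-t}$ being bounded away from $0$ on $\partial\Omega$.

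For optimality of $(t-\tfrac{1}{2})^2$, I would use test functions $u_n := E^{1/2-t}\phi_n$ with $\phi_n \in C_c^{0,1}(\Omega \setminus K)$ a cutoff equal to $1$ on an exhausting compact set $\Omega_n \subset \Omega \setminus K$. The identity then yields
\[
\frac{\int_\Omega E^{2t}|\nabla u_n|_A^2\,dx}{\int_\Omega E^{2t-2}|\nabla E|_A^2 u_n^2\,dx} = \left(t-\tfrac{1}{2}\right)^2 + \frac{\int_\Omega E|\nabla \phi_n|_A^2\,dx}{\int_\Omega E^{-1}|\nabla E|_A^2 \phi_n^2\,dx},
\]
and testing $\L_A(E) = \mu$ against a truncation of $\log E$ shows $\int_{\Omega \setminus K} E^{-1}|\nabla E|_A^2\,dx = +\infty$ (reflecting $\log E \equiv +\infty$ on $\operatorname{supp}\mu$), so the denominator grows without bound along the exhaustion. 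The main obstacle is choosing the transition layers of $\phi_n$ so that the numerator remains bounded despite the blow-up of $E$ near $K$; this parallels the logarithmic test-function construction of Lemma \ref{test}(iii) and standard capacity computations in classical Hardy theory.
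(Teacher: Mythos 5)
Your derivation of the inequality itself is correct and is essentially the paper's: both amount to substituting $w=E^{t-\frac{1}{2}}u$ (equivalently, putting $E^t u$ into the ground-state identity (\ref{start})) and discarding $\int_\Omega E|\nabla w|_A^2\,dx$, the cross term vanishing because $w$ is supported away from $K=\operatorname{supp}\mu$. The two places where your argument is incomplete are the optimality and the non-attainment.

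For optimality you reduce matters to producing cutoffs $\phi_n\in C_c^{0,1}(\Omega\setminus K)$ with $\int_\Omega E|\nabla\phi_n|_A^2\,dx$ bounded while $\int_\Omega E^{-1}|\nabla E|_A^2\phi_n^2\,dx\to\infty$, and you explicitly flag the construction of the transition layers near $K$ (where $E$ blows up at an a priori unknown rate) as ``the main obstacle'' without resolving it. That is the entire content of the step, so as written the optimality claim is not proved. The obstacle is genuine but surmountable: for instance one can take $\phi_n=f_n(E)$ with $f_n$ a linear cutoff between the level sets $\{E=n\}$ and $\{E=2n\}$ and control $n^{-2}\int_{\{n<E<2n\}}E|\nabla E|_A^2\,dx$ by testing $\L_A(E)=\mu$ against $\min(E,2n)^2$. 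The paper sidesteps this entirely: it takes a minimizing sequence $v_m$ for the already-proven unweighted inequality of Theorem \ref{first} and sets $u_m:=E^{-t}v_m$; an integration by parts (using $\L_A(E)=0$ off $K$) shows the weighted Rayleigh quotient equals $D_m+t^2-t\to\frac{1}{4}+t^2-t=(t-\frac{1}{2})^2$. Reducing to Theorem \ref{first} is the cleaner fix and you should adopt it or complete the cutoff estimate.

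For non-attainment, your claim that ``near $\partial\Omega$ the weight $E$ is bounded between positive constants'' is not part of the definition of an interior weight: only $E>0$ \emph{in} $\Omega$ is assumed, and the prototype $E=$ Green's function has $E=0$ on $\partial\Omega$ (the paper's proof splits into the cases $\min_{\partial\Omega}E>0$ and $\min_{\partial\Omega}E=0$ for exactly this reason). When $E$ vanishes on $\partial\Omega$ and $t<\frac{1}{2}$, the putative extremal $cE^{\frac{1}{2}-t}$ tends to $0$ at the boundary, so your trace contradiction evaporates. A repair that needs no boundary information: if $u=cE^{\frac{1}{2}-t}$ with $c\neq 0$ then $\int_\Omega|\nabla E|_A^2E^{2t-2}u^2\,dx=c^2\int_\Omega E^{-1}|\nabla E|_A^2\,dx=\infty$ (by the very divergence you invoke for optimality), which is incompatible with $u\in X_t$ since (\ref{weight1}) bounds that integral by $\|u\|_t^2$.
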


\begin{proof} Let $ t \neq 0, \frac{1}{2}$, $ u \in C_c^{0,1}(\Omega \backslash K)$ and define $ w:=E^t u  \in C_c^{0,1}(\Omega \backslash K)$.   Put $ w$ into 
\[ \int_\Omega | \nabla w|_A^2dx \ge \frac{1}{4} \int_\Omega \frac{| \nabla E|_A^2}{E^2} w^2dx, \] and re-group to obtain (\ref{weight1}).    We now show the constant is optimal. 
Let $ v_m \in C_c^{0,1}(\Omega \backslash K)$ be such that 
 \[ D_m:= \frac{\int_\Omega | \nabla v_m |_A^2dx }{ \int_\Omega \frac{| \nabla E|_A^2}{E^2}v_m^2dx} \rightarrow \frac{1}{4}.\]  Define $ u_m:=E^{-t} v_m \in X_t$.  A computation shows that 
 \[ \frac{ \int_\Omega E^{2t} | \nabla u_m|_A^2dx }{  \int_\Omega | \nabla E|_A^2 E^{2t-2} u_m^2dx}= D_m +t^2-t, \] and since $ D_m \rightarrow \frac{1}{4}$ we see that $ (t - \frac{1}{2})^2$ is optimal.  \\   For the case $ \gamma:=\min_{\pOm}E>0$ we can show the constant is not obtained by using later results on improvements.  If $ \gamma=0$ we then sub $w$ into (\ref{start}) instead of (\ref{hardy}) and hold onto the extra term 
 \[ \int_\Omega E | \nabla ( E^{t- \frac{1}{2}} u )|_A^2dx\]  to see the optimal constant is not attained. 

\end{proof}

   \begin{thm} \label{boundaryweight} 
   \flushleft
   (i) \quad  Suppose $ 0 \neq t < \frac{1}{2}$ and $ E $ is a boundary weight on $ \Omega$.    Then 
   \begin{equation} \label{noim}
   \int_\Omega E^{2t} | \nabla u|_A^2dx - ( t - \frac{1}{2})^2 \int_\Omega | \nabla E|_A^2 E^{2t-2} u^2dx \ge 0,
   \end{equation} for all $ u \in X_t$.  Moreover the constant is optimal and not attained.  \\
   (ii) \quad   Suppose $ 0 \neq t < \frac{1}{2}$ and $ E $ is a boundary weight on $ \Omega$.     Then 
   \begin{equation} \label{im}
   \int_\Omega E^{2t} | \nabla u|_A^2dx - ( t - \frac{1}{2})^2 \int_\Omega | \nabla E|_A^2 E^{2t-2} u^2dx \ge ( \frac{1}{2}-t) \int_\Omega E^{2t-1} u^2 d \mu, 
   \end{equation} for all $ u \in X_t$.  Moreover the constant on the right is optimal and not attained.   \\
   (iii) \quad  Suppose $ t > \frac{1}{2}$ and $ E \in  L^\infty(\Omega)$ is a boundary weight on $ \Omega$.    Then 
   \[ \inf \left\{  \frac{\int_\Omega E^{2t} | \nabla u|_A^2dx}{ \int_\Omega | \nabla E|_A^2 E^{2t-2} u^2dx}: \; \;  u \in X_t \backslash \{0 \}  \right\} =0. \]  
    
   \end{thm}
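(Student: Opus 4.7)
The plan is to exhibit a sequence in $X_t$ whose Rayleigh quotient tends to $0$. The underlying heuristic is that, formally, $u=E^s$ gives Rayleigh quotient equal to $s^2$, and for $t>\frac{1}{2}$ nothing prevents taking $s\searrow 0$: the weight $E^{2t}$ damps the gradient term near $\pOm$ fast enough that the inequality collapses. Rather than verifying $E^s\in X_t$ directly for small $s$ (which is delicate when $s<1$), I would work with the Lipschitz cutoffs $\phi_n:=\min(nE,1)$, which play the same role.

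First I would record two facts about $w:=|\nabla E|_A^2\,E^{2t-2}$. On the one hand, $w\in L^1(\Omega)$: this comes from the identity
\[
(2t-1)\int_\Omega E^{2t-2}|\nabla E|_A^2\,dx \;=\; \int_\Omega E^{2t-1}\,d\mu
\]
already established inside the proof of Theorem~\ref{first}(i) (by testing $\L_A(E)=\mu$ against a truncation and letting the truncation vanish), together with $t>\frac{1}{2}$, $E\in L^\infty(\Omega)$ and the finiteness of $\mu$. On the other hand, $\int_\Omega w\,dx>0$, since $\mu\neq 0$ forces $E$ to be nonconstant.

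Next I would verify that $\phi_n\in X_t$. Since $z\mapsto\min(nz,1)$ is Lipschitz on $[0,\infty)$ with value $0$ at $0$ and $E\in H_0^1(\Omega)\cap L^\infty(\Omega)$, the chain rule gives $\phi_n\in H_0^1(\Omega)$; the continuous embedding $\|u\|_t\le\|E\|_\infty^t\|u\|_{H_0^1}$ (where the hypothesis $E\in L^\infty$ is essential) then places $\phi_n$ inside $X_t$. Combining $\nabla\phi_n=n\nabla E\cdot\chi_{\{E<1/n\}}$ with the pointwise bound $E\le 1/n$ on $\{E<1/n\}$ yields
\[
\int_\Omega E^{2t}|\nabla\phi_n|_A^2\,dx \;=\; n^2\!\int_{\{E<1/n\}}\!\!E^{2t}|\nabla E|_A^2\,dx \;\le\; \int_{\{E<1/n\}} w\,dx \;\longrightarrow\; 0
\]
by dominated convergence. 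Meanwhile $\phi_n^2\nearrow 1$ pointwise a.e., so $\int_\Omega w\,\phi_n^2\,dx\to\int_\Omega w\,dx\in(0,\infty)$. The Rayleigh quotient of $\phi_n$ therefore tends to $0$, as required.

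The only subtle point is the numerator estimate: a naive application of $|\nabla\phi_n|_A\le n|\nabla E|_A$ introduces a factor $n^{2-2t}$ that fails to decay when $\frac{1}{2}<t<1$. The remedy is to absorb the $n^2$ against $E^2\le 1/n^2$ on the small set $\{E<1/n\}$; after this absorption the only mechanism for decay is that $w$ is integrable and the sets $\{E<1/n\}$ shrink. This is exactly why the hypothesis $t>\frac{1}{2}$ enters: it is precisely what makes $w$ integrable via the identity above.
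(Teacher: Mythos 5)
Your argument addresses only part (iii) of the theorem; parts (i) and (ii) --- the inequalities (\ref{noim}) and (\ref{im}) themselves, the optimality of the constants $(t-\frac{1}{2})^2$ and $(\frac{1}{2}-t)$ for $0\neq t<\frac{1}{2}$, and their non-attainment --- are not touched at all, and they are the bulk of the statement. The paper proves (ii) (and hence (i)) by mollifying $E$, substituting $v:=E_\E^t u$ into the already-established inequality (\ref{extra}) written for $E_\E$, and passing to the limit via Fatou's lemma; optimality is obtained by transporting a minimizing sequence $v_m$ for the unweighted problem via $u_m:=E^{-t}v_m$ and using the identity $2\int_\Omega E^{-1}v_m\nabla v_m\cdot A\nabla E\,dx=\int_\Omega \frac{v_m^2}{E}\,d\mu+\int_\Omega\frac{|\nabla E|_A^2}{E^2}v_m^2\,dx$; non-attainment comes from retaining the dropped term $\int_\Omega E\,|\nabla(E^{t-1/2}u)|_A^2\,dx$. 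None of this appears in your proposal, so as a proof of the theorem it is incomplete.

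For part (iii) itself your argument is correct and is a genuine (and arguably cleaner) variant of the paper's. The paper truncates the powers $E^\tau$ below the level $\E$ and sends $\tau=m^{-1}\searrow 0$, $\E=m^{-m}\searrow 0$, getting decay of the Rayleigh quotient essentially from the factor $\tau^2$; you instead fix the exponent at $1$ and use the cutoffs $\min(nE,1)$, extracting decay of the numerator from the absorption $E^{2t}\le E^{2t-2}n^{-2}$ on $\{E<1/n\}$ together with the integrability of $E^{2t-2}|\nabla E|_A^2$ (which is exactly where $t>\frac{1}{2}$, $E\in L^\infty(\Omega)$ and identity (\ref{hhhhh}) enter) and the shrinking of the sets $\{E<1/n\}$. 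Two small points you should still record: that $\int_\Omega E^{2t}|\nabla\phi_n|_A^2\,dx>0$ (otherwise $\phi_n$ is the zero element of the completion $X_t$ and cannot serve as a test function --- this follows since $\nabla E$ cannot vanish a.e.\ on $\{E<1/n\}$ without forcing $\min(E,1/n)=0$ in $H_0^1(\Omega)$), and the identification of the $H_0^1$-limit with the $X_t$-limit when you invoke the embedding $\|u\|_t\le\|E\|_{L^\infty}^t\|u\|_{H_0^1}$. Both are routine, but the first deserves a line.
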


 \begin{proof}  We first prove (\ref{im}) for $ u \in C_c^{0,1}(\Omega)$ which then gives us (\ref{noim}) for the same  class of $u$'s.    Suppose $ 0 \neq t < \frac{1}{2}$ and $ E $ is a boundary weight on $ \Omega$.   We now use the notation introduced in the proof of lemma \ref{startingpoint}; namely $ E_\E $ is the standard mollification of $ E $ and $ F_\E:=\L_A(E_\E)$.  Recall that for any $ u \in C_c^{0,1}(\Omega)$ we have  $ uF_\E \rightarrow u \mu $ in $ H^{-1}(\Omega)$ and that we have 
  \[
  \int_\Omega | \nabla v|_A^2dx \ge \frac{1}{4} \int_\Omega \frac{| \nabla E_\E|_A^2}{E_\E^2} v^2dx + \frac{1}{2} \int_\Omega \frac{v^2}{E_\E}F_\E dx, \] for all $ v \in H_0^1(\Omega)$.   Now let $ u \in C_c^{0,1}(\Omega) $ and set $ v:=E_\E^t u \in C_c^{0,1}(\Omega)$.  Putting $ v $ into the above gives 
  \begin{equation} \label{hold}
  \int_\Omega E_\E^{2t} | \nabla u|_A^2dx\ge ( t - \frac{1}{2})^2 \int_\Omega | \nabla E_\E|_A^2 E_\E^{2t-2} u^2dx + (\frac{1}{2}-t) \int_\Omega E_\E^{2t-1} u^2 F_\E dx.  
  \end{equation}  Now since $ E_\E^{2t} \rightarrow E^{2t} $ in $L^1_{loc}(\Omega)$ we have 
  \[ \int_\Omega E_\E^{2t} | \nabla u|_A^2 dx \rightarrow \int_\Omega E^{2t} | \nabla u|_A^2 dx, \] and using similar ideas from the proof of lemma \ref{startingpoint} one can show that 
  \[  \int_\Omega E_\E^{2t-1} u^2 F_\E dx \rightarrow \int_\Omega E^{2t-1} u^2 d \mu.\]   So using these results, sending $ \E \searrow 0 $ in (\ref{hold}) and after an application of Fatou's lemma we arrive at (\ref{im}) for $ u \in C_c^{0,1}(\Omega)$.    \\ Now we show the constants are optimal.  Recalling the proof of theorem \ref{first} there exists $ v_m \in C_c^\infty(\Omega)$ such that 
  \[ D_m:= \frac{\int_\Omega | \nabla v_m|_A^2dx }{\int_\Omega \frac{ | \nabla E|_A^2}{E^2}v_m^2dx} \rightarrow \frac{1}{4}, \qquad F_m:= \frac{\int_\Omega | \nabla v_m|_A^2dx - \frac{1}{4} \int_\Omega \frac{ | \nabla E|_A^2}{E^2}v_m^2dx}{ \int_\Omega \frac{v_m^2}{E} d \mu} \rightarrow \frac{1}{2}.\]  Define $ u_m:=E^{-t} v_m $ which one easily sees is an element of $ X_t$.   Then 
  \[ \Phi_m:=\frac{\int_\Omega E^{2t} | \nabla u_m|_A^2dx}{ \int_\Omega | \nabla E|_A^2 E^{2t-2} u_m^2dx} = D_m +t^2 -2t   \frac{\int_\Omega E^{-1} v_m \nabla v_m \cdot A \nabla E dx }{ \int_\Omega \frac{ | \nabla E|_A^2}{E^2}v_m^2dx}, \qquad \mbox{and} \]  
  \[ \Psi_m:=\frac{\int_\Omega E^{2t} | \nabla u_m|_A^2dx - (t- \frac{1}{2})^2 \int_\Omega | \nabla E|_A^2 E^{2t-2} u_m^2dx}{ \int_\Omega E^{2t-1} u_m^2 d \mu} = F_m + \frac{ t \int_\Omega \frac{| \nabla E|_A^2}{E^2} v_m^2dx -2t \int_\Omega E^{-1} v_m \nabla v_m \cdot A \nabla Edx}{ \int_\Omega \frac{v_m^2}{E} d \mu}.\]  
    Using $ E_\E, F_\E$ as defined above one can show, using similar methods, that 
  \begin{equation} \label{tttt} 2 \int_\Omega E^{-1} v_m \nabla v_m \cdot A \nabla Edx= \int_\Omega \frac{v_m^2}{E} d \mu + \int_\Omega \frac{ | \nabla E|_A^2}{E^2} v_m^2dx. 
  \end{equation}  So from this we see that  
  \[ \Phi_m= D_m+t^2-t - t \frac{ \int_\Omega \frac{v_m^2}{E} d \mu }{ \int_\Omega \frac{ | \nabla E|_A^2}{E^2} v_m^2dx},  \]  and noting that 
  \[ \frac{ \int_\Omega \frac{v_m^2}{E} d \mu }{ \int_\Omega \frac{ | \nabla E|_A^2}{E^2} v_m^2dx} = \frac{D_m - \frac{1}{4}}{F_m} \rightarrow 0 ,\] we see that (\ref{noim}) is optimal.   Similarly one sees using (\ref{tttt}) that $ \Psi_m = F_m-t $ and hence (\ref{im}) is optimal.  \\
To show the constants are not obtained we as usual hold on to the extra term  that we dropped in the above calculations.  Since $ \int_\Omega E^{-1} | \nabla E|_A^2dx = \infty$ one can show this extra term is positive for $ u \in X_t \backslash \{0\}$.  \\
  \noindent (iii) \quad  Now take $ t > \frac{1}{2}$ and $ E $ a boundary weight on $ \Omega$.   For $ \E, \tau > 0$ but small define 
  \begin{equation*}
  u_{\E, \tau}(x):= \left\{ \begin{array}{ll}
  0 & \qquad E < \E \\
  E^\tau - \E^\tau & \qquad  E > \E.
  \end{array}
  \right.
  \end{equation*}   Then $ u_{\E,\tau} \in X_t$.  Now use the sequence $ u_m$ where $ u_m:=u_{\E_m, \tau_m}$ to see desired result where $ \E_m:=m^{-m}$ and $ \tau_m:=m^{-1}$.
 
 \end{proof}

  \subsection{More general weighted inequalities}
  
  We now investigate the possibility of inequalities of the form 
  \[ \int_\Omega W(x) | \nabla u|_A^2dx \ge \int_\Omega U(x) u^2dx, \qquad u \in C_c^{0,1}(\Omega \backslash K).\]   
 \begin{thm} Suppose $ E $ is an interior weight on  $\Omega$ with $ \gamma:=min_{\pOm} E$ and $ 0 < f \in C^\infty(\gamma,\infty)$.  Then 
 \begin{equation} \label{gweight}
 \int_\Omega f(E)^2 | \nabla u|_A^2dx \ge \int_\Omega | \nabla E|_A^2 \left( \frac{f(E)^2}{4E^2} +f(E) f''(E) \right) u^2dx,
 \end{equation}
 for all $ u \in C_c^{0,1}(\Omega \backslash K)$.    In addition this is optimal (in the sense that the optimal constant is $1$) if $ \liminf_{z \rightarrow \infty} f''(z) > 0$  or if  $ \lim_{z \rightarrow \infty} \frac{z^2 f''(z)}{f(z)}=0$.

 \end{thm}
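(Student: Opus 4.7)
The plan is to reduce (\ref{gweight}) to the Hardy inequality (\ref{hardy}) of Theorem \ref{first}(i) through the substitution $w := f(E) u$, and then to read off optimality by analyzing Rayleigh quotients on minimizing sequences for (\ref{hardy}) that are concentrated near $K$. For $u \in C_c^{0,1}(\Omega \backslash K)$, the strong maximum principle applied to $\L_A(E) = \mu$ (which is nonzero) gives $E > \gamma$ on $\mathrm{supp}(u)$, so $f(E), f'(E), f''(E)$ are smooth and bounded there, and $w \in C_c^{0,1}(\Omega \backslash K) \subset H_0^1(\Omega)$.

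The first step is the key identity
\[ \int_\Omega |\nabla w|_A^2 \, dx = \int_\Omega f(E)^2 |\nabla u|_A^2 \, dx - \int_\Omega f(E) f''(E) |\nabla E|_A^2 u^2 \, dx, \]
obtained by expanding $|\nabla w|_A^2 = f'(E)^2 u^2 |\nabla E|_A^2 + 2 f(E) f'(E) u \nabla E \cdot A \nabla u + f(E)^2 |\nabla u|_A^2$, rewriting the middle term as $\tfrac{1}{2} A \nabla(f(E)^2) \cdot \nabla(u^2)$, and integrating by parts---using that $\nabla \cdot (A \nabla E) = -\L_A(E)$ vanishes on $\mathrm{supp}(u)$ since $\mu$ is supported in $K$. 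Applying (\ref{hardy}) to $w$ and noting $\int |\nabla E|_A^2 w^2/E^2 \, dx = \int |\nabla E|_A^2 f(E)^2 u^2 / E^2 \, dx$ yields (\ref{gweight}) after rearrangement.

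For optimality, let $Q(u)$ be the Rayleigh quotient of (\ref{gweight}). With $P(w) := \int |\nabla w|_A^2 dx$, $H(w) := \tfrac{1}{4} \int |\nabla E|_A^2 w^2/E^2 dx$, and $c(w) := \int f''(E) |\nabla E|_A^2 w^2 / f(E) dx$, the substitution $u = w/f(E)$ gives $Q(u) = (P(w) + c(w))/(H(w) + c(w))$, hence
\[ Q(u) - 1 = \left(\frac{P(w)}{H(w)} - 1\right) \cdot \frac{H(w)}{H(w) + c(w)}. \]
Choose a minimizing sequence $w_m \in C_c^{0,1}(\Omega \backslash K)$ for (\ref{hardy}) with $\mathrm{supp}(w_m) \subset \{E \ge M_m\}$ and $M_m \to \infty$: concretely, truncate $E^{t_m} - g^{t_m}$ with $t_m \nearrow 1/2$ using a cutoff supported in $\{E \ge M_m\}$. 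Since the divergence of $\int E^{2t-2} |\nabla E|_A^2 dx$ as $t \nearrow 1/2$ (cf.\ Lemma \ref{test}) is driven by the singularity of $E$ on $K$, this truncated sequence still satisfies $P(w_m)/H(w_m) \to 1$. Under the first hypothesis, for $m$ large $f''(E) \ge c_0 > 0$ on $\mathrm{supp}(w_m)$ (and $f > 0$), so $c(w_m) \ge 0$ and $H/(H+c) \in [0,1]$. Under the second, for any $\varepsilon > 0$ with $\varepsilon < 1/4$ we may choose $M_m$ so large that $E^2 f''(E)/f(E) \le \varepsilon$ on $\{E \ge M_m\}$, which yields $|c(w_m)| \le 4\varepsilon H(w_m)$ and hence $|H/(H+c)| \le 1/(1-4\varepsilon)$. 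In either case $Q(u_m) \to 1$, which combined with $Q \ge 1$ gives optimality.

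The main technical obstacle is justifying that the truncated sequence $\chi_m (E^{t_m} - g^{t_m})$ remains a minimizing sequence for (\ref{hardy}); this reduces to verifying that outside any fixed neighborhood of $K$ the integrals $\int |\nabla(E^t - g^t)|_A^2 dx$ and $\int |\nabla E|_A^2 E^{2t-2} dx$ remain uniformly bounded as $t \nearrow 1/2$, so that the Rayleigh quotient's approach to the optimal value is driven entirely by near-$K$ behavior and the cutoff contribution is negligible. This localization is implicit in the proof of Lemma \ref{test}.
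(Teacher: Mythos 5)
Your proposal is correct and follows essentially the same route as the paper: the substitution $w=f(E)u$ into (\ref{hardy}) with one integration by parts (using $\L_A(E)=0$ off $K$) for the inequality, and for optimality the transfer $u_m=v_m/f(E)$ of a minimizing sequence for (\ref{hardy}) concentrated near $K$, giving $Q_m=(P+c)/(H+c)$ with the correction term $c$ controlled separately under each hypothesis. Your explicit truncation of $E^{t}-g^{t}$ just spells out the ``without loss of generality the supports concentrate on $K$'' step that the paper asserts, and your $c(w)=\int f''(E)|\nabla E|_A^2 w^2/f(E)\,dx$ is the correct form of the term the paper writes with an apparent typo.
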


  \begin{proof} Let $ u \in C_c^{0,1}(\Omega \backslash K)$ and define $ w:= f(E) u \in C_c^{0,1}(\Omega \backslash K)$.  Putting $ w $ into (\ref{hardy}), integrating by parts and re-grouping gives (\ref{gweight}).   Let $ v_m \in C_c^{0,1}(\Omega \backslash K)$ be such that 
  \[ D_m:= \frac{ \int_\Omega | \nabla v_m|^2dx }{  \int_\Omega \frac{| \nabla E|_A^2}{E^2}v_m^2dx} \rightarrow \frac{1}{4}.\] Without loss of generality we can assume the supports of $ v_m$ concentrate on $K$.  Define $ u_m:= \frac{v_m}{f(E)} \in C_c^{0,1}(\Omega \backslash K)$.  Then a computation shows that 
  \begin{eqnarray*}
  Q_m&:=& \frac{\int_\Omega f(E)^2 | \nabla u_m|_A^2dx }{ \int_\Omega | \nabla E|_A^2 \left( \frac{f(E)^2}{4E^2} +f(E) f''(E) \right) u_m^2dx } \\
  &=& \frac{\int_\Omega | \nabla v_m|_A^2dx + \int_\Omega \frac{| \nabla E|_A^2 f''(E)}{f(E)^2} v_m^2dx}{   \int_\Omega \frac{| \nabla E|_A^2}{4E^2}v_m^2dx + \int_\Omega \frac{| \nabla E|_A^2 f''(E)}{f(E)^2} v_m^2dx }.
  \end{eqnarray*}
  Now suppose $ \liminf_{z \rightarrow \infty} f''(z) > 0$.  Then using the monotonicity of $ x \mapsto \frac{\alpha +x}{\beta+x}$, where $ \alpha $ and $ \beta$ are positive constants, shows $ Q_m \rightarrow 1$.   Now suppose $ \lim_{z \rightarrow \infty} \frac{z^2 f''(z)}{f(z)}=0$.   Using this and the fact that the $ v_m$'s support concentrates on $K$ one easily sees that 
  \[ \frac{\int_\Omega \frac{| \nabla E|_A^2 f''(E)}{f(E)^2} v_m^2dx}{ \int_\Omega \frac{| \nabla E|_A^2}{4E^2}v_m^2dx } \rightarrow 0.\] Using this one sees that $ Q_m \rightarrow 1$.

  \end{proof}

   \subsection{Improvements}
   We now investigate the possibility of improving (\ref{hardy}) in the sense of potentials.  The method we employ was first used by Ghoussoub and Moradifam (see [GM]).    We now define precisely what we mean by a potential.  Suppose $ E $ is an interior weight on $ \Omega$ and $ 0 \le V \in C^\infty(\Omega \backslash K)$ (recall $K$ is the support of $ \mu$).  We say $V$ is a potential for $E$ provided 
   \begin{equation} \label{potential}
    \int_\Omega | \nabla u|_A^2dx - \frac{1}{4} \int_\Omega \frac{| \nabla E|_A^2}{E^2} u^2dx \ge \int_\Omega V(x) u^2dx, 
    \end{equation} for all $ u \in H_0^1(\Omega)$.    We analogously define a potential $V$ for the case that $ E $ is a boundary weight on $ \Omega$ except we restrict our attention to $ 0 \le V \in C^\infty(\Omega)$.   The next theorem gives necessary and sufficient conditions for $V$ to be a potential of $E$ in terms of solvability of a  singular linear equation.    For the necessary direction we will need to assume some conditions on $ \Omega$.   \\
    \textbf{(B1)} \quad  
    Suppose $ E $ is an interior weight on $ \Omega$.  We  assume that that there exists a sequence $(\Omega_m)_m$ of non-empty subdomains of $ \Omega$  which are connected, have a smooth boundary, $  \Omega_m \subset \subset \Omega \backslash K$, $ \Omega_m \subset \subset \Omega_{m+1}$ and $ \Omega \backslash K = \cup_m \Omega_m$.  \\
    \textbf{(B2)} \quad   Suppose $ E $ is a boundary weight on $ \Omega$.  We assume that there exists a sequence $ (\Omega_m)_m$ of non-empty subdomains of $ \Omega$ which are connected, have a smooth boundary, $ \Omega_m \subset \subset \Omega_{m+1}$ and $ \Omega = \cup_m \Omega_m$.

   \begin{thm} \label{geninteriorimprov} \textbf{(interior improvements)} \label{interior improvements} Suppose $ E $ is an interior weight on $ \Omega$ and $ 0 \le V \in C^\infty(\Omega \backslash K)$. \\
   \flushleft (i) \quad Suppose there exists some $ 0 < \phi \in C^2(\Omega \backslash K)$ such that 
   \begin{equation} \label{equation_1}
   -\L_A(\phi) + \frac{A \nabla E \cdot \nabla \phi}{E} +V \phi \le 0 \qquad \mbox{ in $ \Omega \backslash K$}. 
   \end{equation} Then $ V$ is a potential for $E$.  After the change of variables $ \theta:=E^\frac{1}{2} \phi$ one sees that it is sufficient to find a $ 0 < \theta \in C^2(\Omega \backslash K)$ such that 
   \begin{equation} \label{thetaequation}
   \frac{ -\L_A(\theta)}{\theta} + \frac{| \nabla E|_A^2}{4 E^2} + V \le 0 \qquad \mbox{ in $ \Omega \backslash K$}.
   \end{equation} 
   (ii) \quad Suppose $ V $ is a potential for $E$ and $ \Omega$ satisfies (B1).  Then there exists some $ 0 < \theta \in C^\infty(\Om \backslash K)$ which satisfies (\ref{thetaequation}).

   \end{thm}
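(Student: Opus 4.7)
For part (i), the plan is to reduce the improved Hardy inequality to a weighted Hardy inequality on $\Omega\setminus K$ and then apply the classical supersolution method. For $u \in C_c^{0,1}(\Omega\setminus K)$ the measure term in the identity (\ref{start}) vanishes since $\operatorname{supp} u \cap K = \emptyset$, giving
\[
\int_\Omega |\nabla u|_A^2\,dx - \tfrac{1}{4}\int_\Omega \frac{|\nabla E|_A^2}{E^2} u^2\,dx = \int_\Omega E|\nabla v|_A^2\,dx, \qquad v := E^{-1/2} u.
\]
It therefore suffices to establish $\int_\Omega E|\nabla v|_A^2\,dx \ge \int_\Omega VE\, v^2\,dx$. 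Writing $v = \phi w$ with $w \in C_c^{0,1}(\Omega\setminus K)$, expanding $|\nabla(\phi w)|_A^2$, and integrating the mixed term $2E\phi w\, A\nabla\phi\cdot\nabla w$ by parts (using the compact support of $\phi w^2$ in $\Omega\setminus K$, where $\phi$ is smooth) yields
\[
\int_\Omega E|\nabla v|_A^2\,dx = \int_\Omega E\phi^2|\nabla w|_A^2\,dx + \int_\Omega\bigl(E\mathcal{L}_A\phi - A\nabla E\cdot\nabla\phi\bigr)\phi w^2\,dx.
\]
The hypothesis (\ref{equation_1}), multiplied by $E\phi w^2 \ge 0$, rearranges to $(E\mathcal{L}_A\phi - A\nabla E\cdot\nabla\phi)\phi w^2 \ge VE\phi^2 w^2 = VE v^2$, and discarding the nonnegative first term produces the bound. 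Density of $C_c^{0,1}(\Omega\setminus K)$ in $H_0^1(\Omega)$ (which follows from the hypothesis $\dim_{box}(K)<n-2$, per the Remark) extends the inequality to all of $H_0^1(\Omega)$. For the stated change of variables $\theta = E^{1/2}\phi$, a direct computation using the product rule together with $\mathcal{L}_A E = 0$ in $\Omega\setminus K$ produces the pointwise identity
\[
\frac{-\mathcal{L}_A\theta}{\theta} + \frac{|\nabla E|_A^2}{4E^2} + V = \frac{1}{\phi}\left(-\mathcal{L}_A\phi + \frac{A\nabla E\cdot\nabla\phi}{E} + V\phi\right)
\]
on $\Omega\setminus K$, showing the two supersolution conditions are equivalent.

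For part (ii), my plan is a Ghoussoub--Moradifam-style exhaustion/Harnack compactness argument. Set $W := |\nabla E|_A^2/(4E^2) + V$, so the potential hypothesis reads: the quadratic form $u \mapsto \int|\nabla u|_A^2\,dx - \int W u^2\,dx$ is nonnegative on $H_0^1(\Omega)$. For the exhaustion $(\Omega_m)$ provided by (B1), extension by zero embeds $H_0^1(\Omega_m)$ into $H_0^1(\Omega)$, so the first Dirichlet eigenvalue $\lambda_m$ of $\mathcal{L}_A - W$ on $\Omega_m$ satisfies $\lambda_m \ge 0$. Let $\theta_m > 0$ denote the principal eigenfunction, normalized by $\theta_m(x_0) = 1$ at a fixed $x_0 \in \Omega_1$. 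Then $\mathcal{L}_A\theta_m = (W + \lambda_m)\theta_m$, so $-\mathcal{L}_A\theta_m/\theta_m + W = -\lambda_m \le 0$ on $\Omega_m$, which is the target inequality on the exhausting piece. To pass to the limit, fix $k$; for all sufficiently large $m$ the set $\overline{\Omega_k}$ lies strictly inside $\Omega_m$, with $W$ smooth and bounded there. Harnack's inequality applied to the positive solution $\theta_m$ then produces uniform two-sided positive bounds on $\overline{\Omega_k}$ (using the normalization at $x_0$), and interior Schauder estimates give uniform $C^{2,\alpha}$ control on $\overline{\Omega_{k-1}}$. A diagonal extraction yields a subsequence converging locally in $C^2$ to some positive $\theta \in C^2(\Omega\setminus K)$ with $\theta(x_0)=1$, solving $\mathcal{L}_A\theta = (W + \lambda_\infty)\theta$ with $\lambda_\infty := \lim\lambda_m \ge 0$; standard elliptic bootstrap upgrades $\theta$ to $C^\infty(\Omega\setminus K)$.

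The main obstacle I foresee is precisely the uniform control required for the compactness step: the Harnack constant and Schauder constants on $\overline{\Omega_k}$ must be independent of the growing index $m$, since otherwise the candidate limit $\theta$ could degenerate (vanish identically) or lose regularity. This is exactly what condition (B1) is designed to furnish: because each $\overline{\Omega_k}$ is compactly contained in $\Omega_m$ for all large $m$, all the elliptic constants depend only on $k$ and on the fixed smooth coefficients of $\mathcal{L}_A - W$ restricted to $\overline{\Omega_k}$. Combined with the normalization $\theta_m(x_0)=1$, this prevents concentration and vanishing, delivering a strictly positive limit $\theta$ satisfying (\ref{thetaequation}).
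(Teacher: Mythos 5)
Your proposal is correct and follows essentially the same route as the paper: for (i) the same substitution $v=E^{-1/2}u$, $v=\phi w$, integration by parts of the cross term, and use of the supersolution hypothesis, extended by density (the paper additionally notes Fatou's lemma for the limit passage); for (ii) the same Ghoussoub--Moradifam exhaustion with principal Dirichlet eigenpairs of $\L_A - \frac{|\nabla E|_A^2}{4E^2} - V$ on $\Omega_m$, nonnegativity of $\lambda_m$, normalization at $x_0$, Harnack plus elliptic estimates, and a diagonal argument to produce the positive limit $\theta$.
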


 It is important to note that the above theorem can be used (in theory) for best constant calculations; without the need for constructing appropriate minimizing sequences.  To see this suppose $ 0 \le V$ is a potential for the interior weight $E$ and let $C(V)>0$ denote the associated best constant, ie
 \[ C(V):= \inf \left\{  \frac{\int_\Omega | \nabla u|_A^2dx- \frac{1}{4} \int_\Omega \frac{| \nabla E|_A^2}{E^2}u^2dx}{ \int_\Omega Vu^2dx} \; : \; u \in H_0^1(\Omega) \backslash \{0\}  \right\}. \]    Then one sees that 
 \[C(V)=\sup \left\{ c>0: \exists 0< \theta \in C^2(\Omega \backslash K) \; \; s.t. \; \; \frac{ -\L_A(\theta)}{\theta} + \frac{| \nabla E|_A^2}{4 E^2} + cV \le 0 \qquad \mbox{ in $ \Omega \backslash K$} \right\}. \]     After theorem \ref{odemethod}, which is analogous result to the above theorem but phrased in terms of solvability of a linear ode, this remark on best constants will be of more importance because of the shear magnitude of results concerning solvability of ode's.

   \begin{thm}  \label{boundaryimprovements} (\textbf{boundary improvements}) Suppose $ E $ is a boundary weight on  $\Omega$ and $ 0 \le V \in C^\infty(\Omega)$.   
   \flushleft
   (i) \quad  Suppose $ E \in C^{0,1}(\Ov)$, $V$ is a potential for $E$ and  $ \Omega $ satisfies (B2).  Then there exists some $ 0 < \theta \in C^{1,\alpha}(\Omega) $ for all $ \alpha < 1 $ such that 
   \begin{equation} \label{boundeq1} 
   \frac{-\L_A(\theta)}{\theta} + \frac{ | \nabla E|_A^2}{4 E^2} + V \le 0 \qquad \mbox{ in $ \Omega$}.
   \end{equation}
   
   (ii) \quad   Suppose there exists some $ 0 < \phi \in C^2(\Omega)$ such that 
   \begin{equation} \label{bound-eq}
   \frac{-\L_A(\phi)}{\phi} + \frac{ A \nabla E \cdot \nabla \phi}{E \phi} - \frac{ \mu}{2E} + V \le 0 \qquad \mbox{ in $ \Omega$.}
   \end{equation}  Then $V$ is a potential for $E$.
    \end{thm}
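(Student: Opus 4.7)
For (ii), my plan is to start from the identity of Lemma~\ref{startingpoint}(ii) and perform a double substitution, reducing everything to a manifestly non-negative gradient integral. Given $u \in C_c^\infty(\Omega)$, set $v := E^{-1/2}u$, so Lemma~\ref{startingpoint}(ii) gives
\begin{equation*}
\int_\Omega |\nabla u|_A^2 dx - \frac{1}{4}\int_\Omega \frac{|\nabla E|_A^2}{E^2}u^2 dx = \int_\Omega E|\nabla v|_A^2 dx + \frac{1}{2}\int_\Omega \frac{u^2}{E}d\mu.
\end{equation*}
Write $v = \phi z$, so $z = u/(E^{1/2}\phi)$ is smooth on $\mathrm{supp}(u) \subset\subset \Omega$; integrating by parts on the term $2\int E\phi z A\nabla z \cdot \nabla \phi$ and simplifying via $E\phi z^2 = u^2/\phi$ and $\phi z^2 = u^2/(E\phi)$ yields
\begin{equation*}
\int_\Omega E|\nabla v|_A^2 dx = \int_\Omega E\phi^2|\nabla z|_A^2 dx + \int_\Omega \frac{u^2 \L_A(\phi)}{\phi}dx - \int_\Omega \frac{u^2 A\nabla E \cdot \nabla \phi}{E\phi}dx.
\end{equation*}
Meanwhile, pairing hypothesis~(\ref{bound-eq}) against the smooth, compactly supported, non-negative test function $u^2$ (which pairs unambiguously with the measure $\mu$) gives
\begin{equation*}
\int_\Omega V u^2 dx + \int_\Omega \frac{u^2 A\nabla E \cdot \nabla \phi}{E\phi}dx \le \int_\Omega \frac{u^2 \L_A(\phi)}{\phi}dx + \frac{1}{2}\int_\Omega \frac{u^2}{E}d\mu.
\end{equation*}
Combining the three displays leaves
\begin{equation*}
\int_\Omega |\nabla u|_A^2 dx - \frac{1}{4}\int_\Omega \frac{|\nabla E|_A^2}{E^2}u^2 dx - \int_\Omega V u^2 dx \ge \int_\Omega E\phi^2|\nabla z|_A^2 dx \ge 0,
\end{equation*}
and~(\ref{potential}) for $u \in \HOO$ follows by density of $C_c^\infty(\Omega)$ in $\HOO$.

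\medskip

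For (i), my plan is to follow the Ghoussoub--Moradifam eigenvalue-exhaustion scheme used in Theorem~\ref{geninteriorimprov}(ii). Set $W := \frac{|\nabla E|_A^2}{4E^2} + V$ and exploit the interior exhaustion $(\Omega_m)$ from (B2). Since $E \in C^{0,1}(\Ov)$ is bounded away from zero on each $\Omega_m \subset\subset \Omega$ by the strong maximum principle, $W \in L^\infty(\Omega_m)$, and standard spectral theory produces the first Dirichlet eigenvalue $\lambda_m$ of $\L_A - W\cdot$ on $H_0^1(\Omega_m)$ with positive eigenfunction $\theta_m$, normalized by $\theta_m(x_0) = 1$ at a fixed $x_0 \in \Omega_1$. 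The Rayleigh quotient characterization
\begin{equation*}
\lambda_m = \inf_{u \in H_0^1(\Omega_m)\setminus\{0\}}\frac{\int_\Omega |\nabla u|_A^2 dx - \int_\Omega W u^2 dx}{\int_\Omega u^2 dx},
\end{equation*}
combined with the potential hypothesis (which makes the numerator non-negative on $\HOO$, hence on $H_0^1(\Omega_m)$ after zero-extension), forces $\lambda_m \ge 0$; the eigenequation $\L_A(\theta_m) = (W+\lambda_m)\theta_m \ge W\theta_m$ is precisely~(\ref{boundeq1}) restricted to $\Omega_m$, and the monotonicity $\lambda_m \searrow \lambda_\infty \ge 0$ is automatic.

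\medskip

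To extract a global $\theta$, fix a compact $K \subset \Omega$; then $K \subset \Omega_m$ for all large $m$, and Harnack's inequality combined with the normalization $\theta_m(x_0)=1$ yields uniform two-sided positive bounds on $\theta_m|_K$. Interior $W^{2,p}$-estimates applied to $\L_A(\theta_m) = (W+\lambda_m)\theta_m$, whose right-hand side is uniformly in $L^\infty(K)$, combined with Sobolev embedding provide uniform $C^{1,\alpha}(K)$ bounds for every $\alpha < 1$. A diagonal argument across an exhausting sequence of compacta then extracts a subsequence converging in $C^{1,\alpha}_{\mathrm{loc}}(\Omega)$ to a positive limit $\theta$, which inherits $\L_A(\theta) = (W+\lambda_\infty)\theta \ge W\theta$ in $\Omega$, yielding~(\ref{boundeq1}). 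The principal obstacle is in (i): the coefficient $W$ blows up at $\partial\Omega$, so the Harnack and elliptic-regularity step cannot be carried out uniformly up to the boundary and must be done on fixed interior compacta with a diagonal passage; the comparatively minor subtlety in (ii) of pairing~(\ref{bound-eq}) against the measure $\mu$ is benign because the chosen test function $u^2$ is smooth and compactly supported in $\Omega$.
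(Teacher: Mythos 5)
Your proposal is correct and takes essentially the same route as the paper, whose proof of this theorem simply defers to Theorem \ref{geninteriorimprov}: for (ii) the decomposition $v=\phi\psi$ followed by integration by parts on the cross term, now retaining the measure term from Lemma \ref{startingpoint}(ii) so that it cancels against the $-\mu/(2E)$ term in (\ref{bound-eq}); for (i) the eigenvalue--exhaustion, Harnack, and diagonal-argument scheme on the domains from (B2). Your write-up fills in exactly those details.
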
 
   
  \begin{remark}
    Note that putting $ \theta := E^\frac{1}{2} \phi$ into (\ref{bound-eq}) gives, at least formally, (\ref{boundeq1}).  Also one can replace $ \mu$ by the absolutely continuous part of  $ \mu$ in (\ref{bound-eq}).
   
 \end{remark}

  \noindent \textbf{Proof of theorem \ref{geninteriorimprov}.}    (i) \quad Suppose $ V \in C^\infty(\Omega \backslash K) $ is non-negative and there exists some  $ 0 < \phi \in C^2(\Omega \backslash K)$ which solves (\ref{equation_1}).  Let $ u \in C_c^{0,1}(\Omega \backslash K)$ and define $ v:=E^\frac{-1}{2} u$ so   by lemma \ref{startingpoint} we have 
\[ \int_\Omega | \nabla u|_A^2dx - \frac{1}{4} \int_\Omega \frac{ | \nabla E|_A^2}{E^2} u^2dx = \int_\Omega E | \nabla v|_A^2dx. \]   Now define $ \psi \in C_c^{0,1}(\Omega \backslash K)$ by $ v:= \phi \psi$.     A calculation shows that 
\begin{equation} \label{pp}
E | \nabla v|_A^2 = E \psi^2 | \nabla \phi |_A^2 + E \phi^2 | \nabla \psi|_A^2 + 2 E \phi \psi A \nabla \phi \cdot \nabla \psi, 
\end{equation} and integrating, by parts, the last term over $ \Omega $  we obtain 
\begin{eqnarray*}
 \int_\Omega \psi^2 E | \nabla \phi |_A^2dx + 2 \int_\Omega \phi \psi E A \nabla \phi \cdot \nabla \psi dx &=& \int_\Omega \psi^2 \left( \L_A(\phi) \phi E - \phi A \nabla E \cdot \nabla \phi \right) dx \\
&=& \int_\Omega u^2 \left( \frac{ \L_A(\phi)  - \frac{ A \nabla E \cdot \nabla \phi}{E} }{\phi} \right) dx\\
&=:& Q,
\end{eqnarray*} but by (\ref{equation_1}) $ Q \ge \int_\Omega V(x) u^2 dx $ and so we see 
\[ \int_\Omega | \nabla u|_A^2dx - \frac{1}{4} \int_\Omega \frac{ | \nabla E|_A^2}{E^2} u^2 dx \ge \int_\Omega E \phi^2 | \nabla \psi |_A^2dx + \int_\Omega V u^2 dx, \] for all $ u \in C_c^{0,1}(\Omega \backslash K)$.   Now since $ C_c^{0,1}(\Omega \backslash K)$ is dense in $H_0^1(\Omega)$  and using  Fatou's lemma one can  show (\ref{potential}) holds for all $ u \in H_0^1(\Omega)$.      \\ 

\noindent (ii) \quad Now suppose $ V \in C^\infty(\Omega \backslash K)$ is a  potential for $E$ and $ (\Omega_m)_m$  is the sequence of domains from assumption (B1). Define the elliptic operator $P$ by \[ P(u):= \L_A(u) - \frac{| \nabla E|_A^2}{4 E^2} u -Vu. \]  Using a standard constrained minimization argument along with the strong maximum principle there exists some $ 0 < \theta_m \in H_0^1(\Omega_m)$ such that 
\begin{eqnarray} \label{thingy}
P(\theta_m) &=& \lambda_m \theta_m \qquad \mbox{ in $ \Omega_m$} \nonumber \\
\theta_m &=& 0 \qquad \quad  \mbox{ on $ \pOm_m$,}
\end{eqnarray} where $ 0 \le \lambda_m$, ie. $ (\theta_m, \lambda_m)$ is the first eigenpair of $ P$ in $H_0^1(\Omega_m)$.   Since $H_0^1(\Omega_m) \subset H_0^1(\Omega_{m+1})$ we see that $ \lambda_m$ is decreasing and hence there exists some $ 0 \le \lambda $ such that $ \lambda_m \searrow \lambda$.   Let $ x_0 \in \cap_m\Omega_m$ and suitably scale $ \theta_m$ such that $ \theta_m(x_0)=1$ for all $m$. 
   Now fix $k$ and let $  m > k+1 $.   Then \[ P(\theta_m) - \lambda_m \theta_m = 0 \qquad \mbox{ in $ \Omega_{k+1}$}, \] and we now apply Harnacks inequality to the operator $P-\lambda_m$ to see there exists some $ C_k $ such that 
\[ \sup_{\Omega_k} (\theta_m) \le C_k \inf_{\Omega_k} (\theta_m) \le C_k.\]    So we see that $ ( \theta_m ) $ is bounded in $ L^\infty_{loc}( \Omega \backslash K)$.  Now applying elliptic regularity theory and a bootstrap argument one sees that $ (\theta_m)_{m > k+1} $ 
is bounded in $ C^{1,\alpha}(\Omega_k)$ for  $ \alpha <1 $ and after applying a diagonal argument one sees that there exists some  non-zero $0 \le \theta \in C^{1,\alpha}(\Omega \backslash K)$  such that $ \theta_m \rightarrow \theta $ in $ C^{1,\alpha}(\Omega_k)$ for all $k$.  Using this convergence one can pass to the limit in (\ref{thingy}) to see that $ P(\theta)=\lambda \theta $ in $ \Omega \backslash K$ and after applying the strong maximum principle on $ \Omega_m$ one sees that $ \theta >0 $ in $ \Omega \backslash K$.   Now applying regularity theory one sees that $ \theta \in C^\infty(\Omega \backslash K)$.

 \hfill  $\Box$

 \noindent \textbf{Proof of theorem \ref{boundaryimprovements}. }   (i) \quad  The proof is essentially unchanged from the proof of theorem \ref{geninteriorimprov}. \\
  \noindent (ii) \quad Again the proof is the same as in theorem \ref{geninteriorimprov} except now the measure $ \mu$ does not drop out. 
\hfill $\Box $   \\

The next theorem gives some explicit examples of potentials.

  \begin{thm} \label{exampleofimprovements}   
  (i) \quad Suppose $ E $ is an interior weight on $ \Omega$,  $ 0 < \gamma:=\min_{\pOm} E$ and $ 0 < f \in C^2( (\gamma, \infty))$.  Then for all $ u \in C_c^{0,1}(\Omega \backslash K)$ we have 
  \[ \int_\Omega | \nabla u|_A^2 dx - \frac{1}{4} \int_\Omega \frac{| \nabla E|_A^2}{E^2}u^2 dx \ge \int_\Omega \frac{| \nabla E|_A^2}{f(E)} \left( - f''(E) - \frac{f'(E)}{E} \right) u^2 dx . \]  In particular by taking $ f(E):= \sqrt{ \log( \gamma^{-1}E)}$ we obtain    
  \begin{equation} \label{interiorexamp}
  \int_\Omega | \nabla u|_A^2dx - \frac{1}{4} \int_\Omega \frac{| \nabla E|_A^2}{E^2}u^2 dx \ge \frac{1}{4} \int_\Omega \frac{ | \nabla E|_A^2}{E^2 \log^2(\gamma^{-1} E)} u^2 dx,
  \end{equation} for all $ u \in H_0^1(\Omega)$.    Now suppose  $ 0 < \gamma =E $ on $ \pOm$.  Then $ \frac{1}{4}$ (on the right hand side of (\ref{interiorexamp}))  is optimal.  \\
  (ii) \quad Suppose $ E \in  L^\infty(\Omega)$ is a boundary weight.  Then 
  \begin{equation} \label{bexam}
  \int_\Omega | \nabla u|_A^2dx  - \frac{1}{4} \int_\Omega \frac{| \nabla E|_A^2}{E^2}u^2dx \ge \frac{1}{4} \int_\Omega \frac{ | \nabla E|_A^2}{  E^2 \log^2\left( \frac{E}{e \| E \|_{L^\infty}} \right)} u^2 dx,
  \end{equation} for all $ u \in H_0^1(\Omega)$.  \\

  \end{thm}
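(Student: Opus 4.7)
The plan is to apply the characterizations of potentials (Theorems \ref{geninteriorimprov}(i) and \ref{boundaryimprovements}(ii)) to carefully chosen substitutions of the form $\phi=f(E)$, which reduces everything to elementary calculus identities.

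For part (i), I would set $\phi:=f(E)$, which lies in $C^2(\Omega\setminus K)$ since $E>\gamma$ in $\Omega\setminus K$ by the strong maximum principle. Because $\mu$ is supported on $K$, we have $\L_A(E)\equiv 0$ on $\Omega\setminus K$, and the product rule gives $\L_A(\phi)=-f''(E)|\nabla E|_A^2$ there. Substituting this together with $A\nabla E\cdot\nabla\phi=f'(E)|\nabla E|_A^2$ into (\ref{equation_1}) and dividing by $f(E)$ reduces the required inequality to the algebraic identity
\[
\frac{f''(E)|\nabla E|_A^2}{f(E)}+\frac{f'(E)|\nabla E|_A^2}{E f(E)}+V=0,
\]
which holds by the definition of $V$. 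Theorem \ref{geninteriorimprov}(i) then yields the general inequality, and the specialization to $f(z)=\sqrt{\log(\gamma^{-1}z)}$ is a routine calculation showing $-f''(z)-f'(z)/z=1/(4z^2\log^{3/2}(\gamma^{-1}z))$, which upon division by $f(z)$ produces (\ref{interiorexamp}).

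For optimality of the constant $\frac14$ in (\ref{interiorexamp}) under the assumption $E\equiv\gamma$ on $\pOm$, the natural minimizing family is $v_{t,\tau}:=E^t\log^\tau(\gamma^{-1}E)$ from Lemma \ref{test}(iii). Writing $J_t^{(k)}:=\int_\Omega E^{2t-2}|\nabla E|_A^2\log^k(\gamma^{-1}E)\,dx$ and differentiating $v_{t,\tau}$, the Rayleigh quotient for the improvement equals
\[
\frac{(t^2-\tfrac14)J_t^{(2\tau)}+2t\tau J_t^{(2\tau-1)}+\tau^2 J_t^{(2\tau-2)}}{J_t^{(2\tau-2)}}.
\]
For fixed $0<t<\tfrac12$, the integrals $J_t^{(2\tau)}$ and $J_t^{(2\tau-1)}$ remain bounded as $\tau\searrow\tfrac12$ (the argument is essentially a perturbation of Lemma \ref{test}(ii)), while $J_t^{(2\tau-2)}\to\infty$ by Lemma \ref{test}(iii). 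Hence the quotient tends to $\tau^2\big|_{\tau=1/2}=\tfrac14$, establishing optimality.

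Part (ii) proceeds identically, using Theorem \ref{boundaryimprovements}(ii) with $\phi:=\sqrt{\log(eL/E)}$, $L:=\|E\|_{L^\infty}$. Since $E\le L$ we have $\log(eL/E)\ge 1>0$, so $\phi>0$ throughout $\Omega$. Computing as in (i), the $|\nabla E|_A^2$-terms in (\ref{bound-eq}) combine to exactly cancel $V$, while the residual $\mu$-contribution works out to
\[
\frac{\mu\bigl(1-\log(eL/E)\bigr)}{2E\log(eL/E)},
\]
which is nonpositive because $\log(eL/E)\ge 1$ and $\mu\ge 0$. The only subtle point in the entire theorem is this sign analysis of the $\mu$-remainder: the constant $e$ inside the logarithm is chosen precisely so that $\log(eL/E)\ge 1$, which is exactly what is needed to make the boundary-weight computation close up with the correct sign.
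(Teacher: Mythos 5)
Your part (i) is correct and is essentially the paper's own argument: the same substitution $\phi:=f(E)$ into (\ref{equation_1}) (legitimate here because an interior weight is smooth on $\Ov\setminus K$, so $f(E)\in C^2(\Omega\setminus K)$ and $\L_A(E)=0$ there), and the same minimizing family $E^t\log^\tau(\gamma^{-1}E)$ with the same three-term decomposition of the Rayleigh quotient; your observation that the integrals with logarithmic exponents $2\tau$ and $2\tau-1$ stay bounded as $\tau\searrow\frac{1}{2}$ while $J_t(\tau)\to\infty$ is exactly what Lemma \ref{test} supplies.

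Part (ii), however, has a genuine gap. You invoke Theorem \ref{boundaryimprovements}(ii) with $\phi:=\sqrt{\log(eL/E)}$, but that theorem requires $0<\phi\in C^2(\Omega)$, and a boundary weight $E$ is only assumed to lie in $H_0^1(\Omega)$ with $\L_A(E)=\mu$ a finite measure; $E$, hence $f(E)$, need not be $C^2$ (nor even $C^1$) in $\Omega$ --- think of $\mu$ a surface measure, across which $\nabla E$ jumps. Consequently $\L_A(f(E))$ is only a distribution, your two applications of the chain rule are not justified, and the pointwise inequality (\ref{bound-eq}) does not make sense as you use it. The paper circumvents this by mollifying: it works with $E_\E$ and $F_\E=\L_A(E_\E)$, derives the inequality with an explicit remainder $I_\E$ whose integrand carries the factor $\frac{f'(E_\E)}{f(E_\E)}+\frac{1}{2E_\E}=\frac{1}{2E_\E}\bigl(1-\frac{1}{\log(eL/E_\E)}\bigr)\ge 0$, shows $\liminf_{\E\searrow 0}I_\E\ge 0$, and passes to the limit via Fatou. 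Your sign analysis of the $\mu$-remainder --- nonpositivity of $\frac{\mu(1-\log(eL/E))}{2E\log(eL/E)}$ because the constant $e$ forces $\log(eL/E)\ge 1$ --- is precisely the same computation with the opposite sign convention, so the key idea is present; what is missing is the regularization scaffolding that turns the formal computation into a proof.
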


   \begin{proof}
(i) \quad  Let $ E $ be an interior weight on $ \Omega$, $ \gamma:=\min_{\pOm}E >0 $ and suppose $ 0 < f \in C^2((\gamma, \infty))$.  Put $ \phi:=f(E)$ into (\ref{equation_1}) to obtain the result.      \\
Now take $ f(E):=\sqrt{ \log(\gamma^{-1}E)}$ to obtain (\ref{interiorexamp}) for all $ u \in C_c^{0,1}(\Omega \backslash K)$ and extend to all of $H_0^1(\Omega)$ by density and by Fatou's lemma.    We now show $ \frac{1}{4}$ is optimal. \\   Fix $ 0 < t < \frac{1}{2}$ and for $ \tau > \frac{1}{2}$ define $ u_\tau:= E^t \log^\tau(\gamma^{-1}E)$.   By lemma \ref{test} $ u_\tau \in H_0^1(\Omega)$. A  computation shows that 
\begin{eqnarray*} 
 \frac{ \int_\Omega | \nabla u_\tau|_A^2 dx - \frac{1}{4} \int_\Omega \frac{ | \nabla E|_A^2}{E^2} u_\tau^2 dx}{ \int_\Omega \frac{ | \nabla E|_A^2}{ E^2 \log^2( E \gamma^{-1})} u_\tau^2 dx }& = & ( t^2- \frac{1}{4}) \frac{ \int_\Omega E^{2t-2} | \nabla E|_A^2 \log^{2 \tau}(E \gamma^{-1}) dx  }{ \int_\Omega E^{2t-2} | \nabla E|_A^2 \log^{2 \tau-2}(E \gamma^{-1}) dx } \\   
 && + \tau^2  \\ 
 &&  2 t \tau \frac{ \int_\Omega E^{2t-2} | \nabla E|_A^2 \log^{2 \tau-1}(E \gamma^{-1}) dx }{\int_\Omega E^{2t-2} | \nabla E|_A^2 \log^{2 \tau-2}(E \gamma^{-1}) dx} \\
 &=& (t^2-\frac{1}{4} ) \frac{ J_t(\tau+1)}{J_t(\tau)} + \tau^2 + 2 t \tau \frac{ J_t(\tau+1/2)}{J_t(\tau)},
 \end{eqnarray*} where $ J_t(\tau)$ is defined in lemma \ref{test}.  Sending $ \tau \searrow \frac{1}{2}$ and using results from lemma \ref{test} we see $ \frac{1}{4}$ is optimal.   \\
 \noindent (ii) \quad  Suppose $ E \in L^\infty(\Omega)$ is a boundary weight on $ \Omega$. Here we use the notation from the proof of lemma \ref{startingpoint}; $ E_\E:= \eta_\E \ast E$,  $ F_\E:= \L_A(E_\E)$.     Let $ 0 < f \in C^2( (0, \| E \|_{L^\infty} ])$.   Then starting at (\ref{spbound})  for $ E_\E $ and decomposing $ v$ as usual one arrives at 
 \begin{eqnarray}
  \int_\Omega | \nabla  u |_A^2dx - \frac{1}{4} \int_\Omega \frac{ | \nabla E_\E|_A^2}{E_\E^2} u^2dx & \ge & \int_\Omega \frac{ | \nabla E_\E|_A^2}{f(E_\E)} \left( - f''(E_\E) - \frac{f'(E_\E)}{E_\E} \right) u^2 dx\\
  && + \int_\Omega \left( \frac{ f'(E_\E)}{f(E_\E)} + \frac{1}{2 E_\E} \right) u^2 F_\E dx, 
  \end{eqnarray} for all $ u \in C_c^\infty(\Omega)$ after using methods similar to the proof of (i).    Now take $ f(z):=\sqrt{  -\log( \frac{z}{e \| E \|_{L^\infty}})}$ and let $ u \in C_c^\infty(\Omega)$.  Then one has 
  \[ \int_\Omega |  \nabla u|_A^2dx - \frac{1}{4} \int_\Omega \frac{ | \nabla E_\E|_A^2}{E_\E^2} u^2dx \ge \frac{1}{4} \int_\Omega \frac{| \nabla E|_A^2}{ E_\E^2 \log^2(  \frac{ E_\E}{e \| E \|_{L^\infty}})} u^2 dx + I_\E, \] where 
  \[ I_\E:= \frac{1}{2} \int_\Omega \frac{u^2}{E_\E^2} \left( 1 + \frac{1}{ \log(  \frac{ E_\E}{e \| E \|_{L^\infty}})} \right) F_\E dx.\] 
Using methods similar to ones used in the proof of lemma \ref{startingpoint} one easily sees that $ \lim_{\E \searrow 0} I_\E \ge 0 $.   Using this and standard  results on convolutions and Fatou's lemma we obtain the desired inequality for $ u \in C_c^\infty(\Omega)$ and we then extend to all of $ H_0^1(\Omega)$.  

\end{proof}

  We now obtain a more useful (than (\ref{thetaequation})) necessary and sufficient condition for $V$ to be a potential for $E$; at least in the case where $ E $ is an interior weight on $ \Omega$ and $ E=\gamma \ge 0$ on $ \pOm$.     As in theorem \ref{geninteriorimprov} we assume some geometrical properties of $ \Omega$.

  \begin{thm} (\textbf{Interior improvements using ode methods}) \label{odemethod}  Suppose $ E $ is an interior weight on $ \Omega$, $ E=\gamma \ge 0 $ on $ \pOm$, $ 0 \le  f \in C^\infty(\gamma, \infty)$ and $ \Omega_t:=\{x \in \Omega: \gamma + \frac{1}{t} <E(x) < t\}$ is connected for sufficiently large $t$.   Then the following are equivalent: \\
  (i) \quad For all $ u \in H_0^1(\Omega)$ 
  \begin{equation} \label{pep}
  \int_\Omega | \nabla u|_A^2dx - \frac{1}{4} \int_\Omega \frac{| \nabla E|_A^2}{E^2} u^2dx \ge \int_\Omega f(E) | \nabla E|_A^2 u^2dx. 
  \end{equation}  
  (ii) \quad There exists some $ 0 < h \in C^2(\gamma, \infty)$ such that 
  \begin{equation}
  h''(t)+ \left( f(t)+ \frac{1}{4t^2} \right) h(t) \le 0, 
  \end{equation} in $ ( \gamma, \infty)$.

  \end{thm}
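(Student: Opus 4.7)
I would begin with the straightforward direction (ii) $\Rightarrow$ (i). Given $h \in C^2(\gamma, \infty)$ positive with $h'' + (f + 1/(4t^2))h \le 0$, I would set $\theta(x) := h(E(x))$, which lies in $C^2(\Omega \setminus K)$ and is positive. Because $\mathrm{div}(A \nabla E) = -\mu$ is supported on $K$, a direct computation gives $-\L_A \theta = h''(E)\,|\nabla E|_A^2$ on $\Omega \setminus K$, so
$$ \frac{-\L_A\theta}{\theta} + \frac{|\nabla E|_A^2}{4E^2} + f(E)\,|\nabla E|_A^2 = |\nabla E|_A^2 \left[ \frac{h''(E)}{h(E)} + \frac{1}{4 E^2} + f(E)\right] \le 0 $$
by the ODE hypothesis. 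Theorem \ref{interior improvements}(i) (in the $\theta$-formulation) then yields (\ref{pep}).

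For the reverse direction (i) $\Rightarrow$ (ii), the plan is first to produce a PDE supersolution and then to reduce it to a one-dimensional ODE supersolution. The hypothesis on $\Omega_t$, together with $E = \gamma$ on $\pOm$ and $E = \infty$ on $K$, makes $\{\Omega_t\}$ an exhaustion of $\Omega \setminus K$ satisfying (B1); hence Theorem \ref{interior improvements}(ii) furnishes $\theta \in C^\infty(\Omega \setminus K)$, $\theta > 0$, with the PDE inequality (\ref{thetaequation}) for $V = f(E)|\nabla E|_A^2$. Writing $\psi(s) := f(s) + 1/(4s^2)$ and inserting the test functions $u(x) := \eta(E(x))$ with $\eta \in C_c^\infty(\gamma, \infty)$ (which belong to $H_0^1(\Omega)$ thanks to the boundary behavior of $E$) into (\ref{pep}), the coarea formula yields the weighted one-dimensional Hardy inequality
$$ \int_\gamma^\infty m(s)\bigl[(\eta'(s))^2 - \psi(s)\,\eta(s)^2\bigr]\, ds \ge 0, $$
where $m(s) := \int_{\{E=s\}} |\nabla E|_A^2/|\nabla E|\, d\sigma_s$ is smooth and strictly positive on $(\gamma, \infty)$ by the connectedness of $\Omega_t$. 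Standard Sturm--Liouville non-oscillation theory (\emph{cf.}\ Hartman) then gives a positive $H \in C^2(\gamma, \infty)$ with $(mH')' + \psi\, m H \le 0$.

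The main obstacle is the last step: passing from this weighted ODE supersolution to the unweighted form $h'' + \psi h \le 0$ demanded by (ii). I would handle this via a Liouville-type substitution $h := H\sqrt{m}$, combined with the complementary family of test functions $u(x) := \sqrt{E(x)}\,\eta(E(x))$; the latter corresponds precisely to the $\theta = E^{1/2}\phi$ change of variables built into Theorem \ref{interior improvements}, and supplies the derivative identity needed to cancel the $m$-dependent cross terms that would otherwise appear in the transformed equation. The connectedness of $\Omega_t$ and the boundary condition $E = \gamma$ on $\pOm$ ensure that $m$ is smooth and positive throughout $(\gamma, \infty)$, so the transformation is globally well-defined and delivers $h \in C^2(\gamma, \infty)$ satisfying the stated ODE on the full interval.
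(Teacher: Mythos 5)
Your direction (ii) $\Rightarrow$ (i) is correct and is exactly the paper's argument: set $\theta=h(E)$, use $\L_A(E)=0$ off $K$ to get $\L_A(\theta)=-h''(E)|\nabla E|_A^2$, and invoke Theorem \ref{geninteriorimprov}. The problem is the last step of your (i) $\Rightarrow$ (ii). The Liouville substitution $h:=H\sqrt{m}$ does not do what you need: it converts $(mH')'+\psi mH\le 0$ into
\[ h''+\Big(\psi-\frac{(\sqrt{m}\,)''}{\sqrt{m}}\Big)h\le 0, \]
i.e.\ it shifts the potential by $-(\sqrt{m})''/\sqrt{m}$, a term with no sign in general, so you do not obtain $h''+\psi h\le 0$. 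Moreover the ``complementary'' test functions $u=\sqrt{E}\,\eta(E)$ cannot rescue this: they are still functions of $E$ alone, so by the same coarea computation they reproduce the identical one--dimensional inequality $\int m\big[(g')^2-\psi g^2\big]\,ds\ge 0$ with $g(s)=\sqrt{s}\,\eta(s)$, which is just a reparametrization of the family you already have and supplies no new identity. As written, the passage from the weighted to the unweighted ODE supersolution is a genuine gap. The fix is an observation you are missing: $m$ is \emph{constant}. Indeed, for $\gamma<s_1<s_2$, integrating $\operatorname{div}(A\nabla E)=-\mu$ over $\{s_1<E<s_2\}$ (which misses $K$) and using the divergence theorem gives $m(s_1)=m(s_2)=\mu(\Omega)$. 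With $m\equiv\mu(\Omega)$ your weighted inequality is already $\int_\gamma^\infty\big[(\eta')^2-\psi\eta^2\big]\,ds\ge 0$, and classical disconjugacy/Sturm--Liouville theory then yields a positive $h$ with $h''+\psi h\le 0$ on $(\gamma,\infty)$, completing the argument. (Note also that your opening appeal to Theorem \ref{geninteriorimprov}(ii) is a red herring: the $\theta$ it produces need not be a function of $E$, so it cannot by itself yield an ODE.)

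Once repaired, your route is genuinely different from the paper's and arguably more elementary. The paper does not pass through a one--dimensional inequality at all: it runs the constrained minimization of Theorem \ref{geninteriorimprov}(ii) inside the subspace $H_{0,E}^1(\Omega_m)$ of functions constant on level sets of $E$, shows via (\ref{pep}) that the resulting eigenvalues satisfy $\lambda_m\ge 1$, extracts a positive limit $\phi$ by Harnack and elliptic estimates, observes that $\phi=h(E)$ by the level--set constraint, and cancels $|\nabla E|_A^2$ using Hopf's lemma to land on $-h''=\lambda\,\psi h$ with $\lambda\ge 1$, whence $h''+\psi h\le 0$ since $\psi\ge 0$. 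Your approach trades the PDE compactness machinery for the coarea formula plus one--dimensional oscillation theory, at the cost of needing the constancy of $m$ made explicit; the paper's approach hides that same fact inside the assertion that the constrained minimizer satisfies the strong--form equation.
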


  \begin{proof} Let $ E $ be an interior weight on $ \Omega$, $ E = \gamma \ge 0 $ on $ \pOm$ and $ 0 \le f \in C^\infty(\gamma ,\infty)$. \\
 \noindent $ (ii) \Rightarrow (i)$ \\  Setting $ \theta:=h(E)$ and using (ii) along with theorem \ref{geninteriorimprov} gives (i).\\  
\noindent $ (i) \Rightarrow (ii)$. \\ 
 The proof will be similar to theorem \ref{geninteriorimprov} (ii).  
  Let $ \gamma < t_m \nearrow \infty$ and  define $ \Omega_m:=\{x \in \Omega: \gamma + \frac{1}{t_m} < E(x) < t_m\}$.  By hypothesis we can take $ \Omega_m$ to be connected and non-empty for each $m$. Now define  $ H_{0,E}^1(\Omega_m):=\{ \phi \in H_0^1(\Omega_m): \phi \; \mbox{ is constant on level sets of $E$} \}$  and set  
 \[ F(\phi):=\frac{1}{2} \int_{\Omega_m} | \nabla \phi |_A^2dx, \quad  J(\phi):=\frac{1}{2} \int_{\Omega_m} | \nabla E|_A^2(f(E) + \frac{1}{4E^2}) \phi^2 dx, \quad  M_m:=\{ \phi \in H_{0,E}^1(\Omega_m): J(\phi)=2^{-1} \}.\]  Standard methods show the existence of $ 0 < \phi_m \in H_{0,E}^1(\Omega_m)$ such that $ \lambda_m:=\inf_{M_m} F= F(\phi_m)$  and hence $ \L_A(\phi_m)=\lambda_m | \nabla E|_A^2( f(E)+ \frac{1}{4E^2}) \phi_m$ in $ \Omega_m$ with $ \phi_m =0 $ on $ \partial \Omega_m$.  Since $ H_{0,E}^1(\Omega_m) \subset H_{0,E}^1(\Omega_{m+1})$  one sees that $ \lambda_m$ is
  decreasing and from (\ref{pep}) one sees that $ \lambda_m\ge 1$ and hence there exists some $ \lambda \ge 1 $ such that $ \lambda_m \searrow \lambda$.
  By suitably scaling $ \phi_m$ as before and after an application of Harnacks inequality we can assume that $ \phi_m \rightarrow \phi$ in $ C^{1,\alpha}_{loc}(\Omega \backslash K)$ where $ \phi \ge 0 $ is nonzero and constant on level sets of $E$.   Passing to the limit  shows that 
       \[ \L_A(\phi)= \lambda | \nabla E|_A^2 \left( f(E) + \frac{1}{4E^2} \right) \phi \qquad \mbox{in $ \Omega \backslash K$},\] and a strong maximum principle argument shows that $ \phi>0$ in $ \Omega \backslash K$.   Since $ \phi$ constant on level sets of $E$ we have $ \phi=h(E)$ for some $ 0<h$ in $ (\gamma, \infty)$ and   since $ \phi$ smooth on $ \Omega \backslash K$ we see that $ h$ is smooth on $(\gamma, \infty)$.  Writing the equation for $ \phi$ in terms of $h$ gives 
 \[ - h''(E) | \nabla E|_A^2 = \lambda h(E) \left( f(E) + \frac{1}{4E^2} \right) | \nabla E|_A^2 \qquad \mbox{ in $ \Omega \backslash K$}, \] and using Hopfs lemma we can cancel the gradients.

  \end{proof}

  Using the vast knowledge of ode's one can use the above theorem to obtain various results concerning potentials of the form $ V(x)=| \nabla E|_A^2 f(E)$.   We don't exploit this fact other than to look at one result. 
  
  \begin{cor} Suppose $ E $ is an interior potential on $ \Omega$ and $ E=0$ on $ \pOm$.  Then there no $0<f \in C(0,\infty)$   such that 
  \[ \int_\Omega | \nabla u|_A^2dx - \frac{1}{4} \int_\Omega \frac{| \nabla E|_A^2}{E^2} u^2dx \ge \int_\Omega f(E) | \nabla E|_A^2 u^2dx, \qquad u \in H_0^1(\Omega).\]

  \end{cor}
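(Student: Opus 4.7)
The plan is to apply Theorem \ref{odemethod} (with $\gamma=0$) to convert the corollary into a non-existence statement for a one-dimensional ODE on $(0,\infty)$, and then dispatch that ODE statement directly by a substitution that exposes the criticality of $1/(4t^2)$ at both endpoints. The mild mismatch in regularity---Theorem \ref{odemethod} is stated for $0 \le f \in C^\infty(\gamma,\infty)$ while the corollary considers $f$ only continuous---is handled by observing that any continuous $f > 0$ on $(0,\infty)$ dominates some smooth positive $\tilde f$, and if the displayed inequality holds for $f$ it also holds for $\tilde f$ (the right-hand side only shrinks); so it is enough to rule out all smooth positive $\tilde f$.

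Under this reduction, Theorem \ref{odemethod} tells us that a smooth positive $\tilde f$ being a potential is equivalent to the existence of $0 < h \in C^2(0,\infty)$ with $h''(t) + (\tilde f(t) + 1/(4t^2)) h(t) \le 0$ on $(0,\infty)$. I would substitute $h(t) = \sqrt{t}\, k(t)$, so $k > 0$. A direct calculation yields the identity $h''(t) + h(t)/(4t^2) = (tk'(t))'/\sqrt{t}$, so after multiplying through by $\sqrt{t}$ the inequality becomes
\[ (tk'(t))' + \tilde f(t)\, t\, k(t) \le 0 \qquad \text{on } (0,\infty). \]
Since $\tilde f, t, k > 0$, the function $\phi(t) := tk'(t)$ is strictly decreasing on $(0,\infty)$.

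It then remains to show that strict monotonicity of $\phi$ is incompatible with $k > 0$ throughout $(0,\infty)$, and this is the place where the whole argument really lives. If $\phi(t_0) < 0$ for some $t_0$, then $\phi(t) < \phi(t_0) < 0$ for $t > t_0$, and integrating $k'(t) = \phi(t)/t$ yields $k(T) \le k(t_0) + \phi(t_0) \log(T/t_0) \to -\infty$, contradicting $k > 0$. Otherwise $\phi \ge 0$ everywhere; strict decrease forces $\phi(1) > \phi(2) \ge 0$, so $\phi(1) > 0$, and then for $t \in (0,1)$ we have $\phi(t) > \phi(1) > 0$, so $k(t) = k(1) - \int_t^1 \phi(s)/s\, ds \le k(1) - \phi(1) \log(1/t) \to -\infty$ as $t \to 0^+$, again a contradiction. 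The substantive insight is simply that $1/(4t^2)$ is critical simultaneously at $0$ and at $\infty$, so adding any strictly positive $\tilde f$ destroys the last positive supersolution; the rest is direct computation.
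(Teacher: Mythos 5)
Your proof is correct, and its skeleton matches the paper's: reduce via theorem \ref{odemethod} to a one-dimensional non-existence statement and then dispatch the resulting ODE with the substitution $h=\sqrt{t}\,k$. Where you differ is precisely in the two places the paper is least explicit, and both of your additions are genuine improvements. First, the paper silently ignores the mismatch between $f\in C(0,\infty)$ in the corollary and $0\le f\in C^\infty$ in theorem \ref{odemethod}; your reduction to a smooth positive minorant $\tilde f\le f$ (for which the inequality still holds since the right-hand side only shrinks) closes that gap cleanly. Second, and more substantially, after the substitution the paper arrives at $y''+y'/t+\bigl(\lambda f+\frac{\lambda-1}{4t^2}\bigr)y=0$ with $y>0$ and simply cites ``oscillation theory from ordinary differential equations'' to conclude; your identity $h''+h/(4t^2)=(tk')'/\sqrt{t}$ turns the supersolution inequality into strict monotonicity of $\phi=tk'$, and your two-case argument, which exploits $\int_0 ds/s=\int^\infty ds/s=\infty$ to force $k\to-\infty$ at one endpoint or the other, is a complete and self-contained replacement for that citation. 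It also has the advantage of working directly with the differential \emph{inequality} in part (ii) of theorem \ref{odemethod}, so you never need to re-enter the proof of that theorem to extract the eigenvalue $\lambda\ge1$ as the paper does. The only caveat, which applies equally to the paper's own proof, is that theorem \ref{odemethod} carries the standing hypothesis that the level regions $\{\gamma+1/t<E<t\}$ are eventually connected, which the corollary does not restate; you should flag that you are assuming it.
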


  \begin{proof} Suppose there is such a function $f$.  Using the proof of theorem \ref{odemethod} one sees that there is some $ 0 < h \in C^2(0,\infty)$ such that 
  \[ h''(t)+  \lambda \left( f(t)+ \frac{1}{4t^2} \right) h(t) =0,\]  in $ (0,\infty)$ where $ \lambda \ge 1$.  Now set $ h(t)=\sqrt{t} y(t)$ to see that 
  \[ 0=y''(t) + \frac{y'(t)}{t} + y(t) \left( \lambda f(t) + \frac{\lambda-1}{4 t^2} \right), \] in $ (0,\infty)$ and $ y(t)>0$.    But oscillation theory from ordinary differential equations shows this is impossible.

  \end{proof}

 Other than some regularity issues this ode approach extends immediately to the case where $ E $ is a boundary weight in $ \Omega$.        Using this corollary (but in the boundary case) one can show the result mentioned in the examples section  regarding improvements of  Hardy's boundary inequality in the half space; the regularity is not an issue in this example since $ \delta(x):=dist(x, \partial \IR^n_+)=x_n$ is smooth.     
 
   We now present a result obtained by  Avkhadiev and Wirths (see [AW]).    Given a domain $ \Omega$ in $ \IR^n$ we say it has finite inradius if $ \delta(x):=dist(x,\pOm)$ is bounded in $ \Omega$.   We let $ \lambda_0$ (Lambs constant) denote the first positive zero of $ J_0(t)-2tJ_1(t)$ where $ J_n$ is the Bessel function of order $n$.   Numerically one sees that $ \lambda_0=0.940...$.   Now for their result.

 \begin{thm}(Avkhadiev, Wirths)  Suppose $ \Omega$ is a convex domain in $ \IR^n$ with finite inradius.  Then 
 \[ \int_\Omega | \nabla u|^2dx \ge \frac{1}{4} \int_\Omega \frac{u^2}{\delta^2} dx+ \frac{\lambda_0^2}{\| \delta \|_{L^\infty}^2} \int_\Omega u^2dx, \qquad u \in H_0^1(\Omega)\] is optimal.

 \end{thm}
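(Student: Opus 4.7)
The plan is to work within the framework of this paper with $E=\delta$. Since $\Omega$ is convex, $\delta$ is concave, so $\mu:=-\Delta\delta$ is a nonnegative finite measure supported on the ridge $K$ of $\Omega$, and $|\nabla\delta|=1$ a.e. Hence $\delta$ is a boundary weight, and theorem \ref{first}(i) already supplies the plain Hardy inequality with coefficient $\tfrac14$. The task is to show the potential $V(x)\equiv c:=\lambda_0^2/\|\delta\|_{L^\infty}^2$ is admissible and sharp.

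To prove admissibility I would apply theorem \ref{boundaryimprovements}(ii), or equivalently (via the remark following it) its $\theta$-form $-\L(\theta)/\theta + |\nabla E|_A^2/(4E^2)+V\le 0$, with the ansatz $\theta(x)=h(\delta(x))$. In the spirit of theorem \ref{odemethod} this reduces to finding $h>0$ on $(0,\|\delta\|_{L^\infty}]$ with $h''(t)+\bigl(c+\tfrac{1}{4t^2}\bigr)h(t)\le 0$. The substitution $h(t)=\sqrt{t}\,y(t)$ collapses the inequality to Bessel's equation $y''+y'/t+cy\le 0$, naturally satisfied with equality by $y(t)=J_0(\sqrt{c}\,t)$, giving the candidate
\[ h(t)=\sqrt{t}\,J_0(\sqrt{c}\,t),\qquad \theta(x)=\sqrt{\delta(x)}\,J_0\!\bigl(\sqrt{c}\,\delta(x)\bigr). \]

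The delicate step is verifying the supersolution inequality in the distribution sense across the ridge $K$. A formal computation gives $-\Delta\theta=-h''(\delta)+h'(\delta)\,\mu$ in $\mathcal D'(\Omega)$, so once the bulk ODE holds with equality, the residual is exactly $h'(\delta)\mu$, which is a nonnegative measure iff $h'(t)\ge 0$ on the range of $\delta|_K$, a set that includes $\|\delta\|_{L^\infty}$. Using $J_0'=-J_1$, direct differentiation gives
\[ h'(t)=\frac{1}{2\sqrt{t}}\bigl[J_0(\sqrt{c}\,t)-2\sqrt{c}\,t\,J_1(\sqrt{c}\,t)\bigr], \]
so with $s=\sqrt{c}\,t$ the condition $h'\ge 0$ is precisely $J_0(s)-2sJ_1(s)\ge 0$, which by the definition of $\lambda_0$ holds exactly on $[0,\lambda_0]$. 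Taking $c=\lambda_0^2/\|\delta\|_{L^\infty}^2$ makes this an equality at the maximum, and positivity of $h$ on $(0,\|\delta\|_{L^\infty}]$ is automatic since $\lambda_0<j_{0,1}$ (the first zero of $J_0$). This establishes the inequality; a standard mollification of $\delta$ handles the fact that $\theta$ is only $C^{1,1}$ (not $C^2$) off $K$.

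For optimality the natural extremal is $u_*=\sqrt{\delta}\,J_0(\lambda_0\delta/\|\delta\|_{L^\infty})$, which saturates both the bulk ODE and the endpoint condition $h'(\|\delta\|_{L^\infty})=0$ making the ridge contribution vanish. Since $|\nabla u_*|^2\sim(4\delta)^{-1}$ near $\partial\Omega$, $u_*\notin H_0^1(\Omega)$, so I would use truncations $u_n$ replacing $\sqrt{t}$ by a linear ramp on $\{\delta<1/n\}$; a coarea computation, together with the fact that the bulk Hardy integrand vanishes identically for $u_*$, shows the Rayleigh quotient $\bigl(\int|\nabla u_n|^2-\tfrac14\int u_n^2/\delta^2\bigr)/\int u_n^2$ tends to $c$. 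The main obstacle is precisely this optimality argument: the interior-weight ODE criterion of theorem \ref{odemethod} requires only $h>0$, and would admit any $c<j_{0,1}^2/\|\delta\|_{L^\infty}^2$; the additional monotonicity $h'\ge 0$ that sharpens this to $\lambda_0$ is a genuinely boundary-weight phenomenon coming from the ridge measure $\mu$, and must be extracted either by strengthening theorem \ref{odemethod} to the boundary case or, as above, by direct test-function computation with a carefully designed truncated sequence.
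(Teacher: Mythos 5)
Two preliminary remarks. First, the paper does not actually prove this statement: it is quoted from [AW], and what the paper proves is only the generalized \emph{inequality} in the following theorem (for a general $E$ with $|\nabla E|_A=1$ a.e.), with no optimality claim. Second, your proof of the inequality half is essentially the paper's argument for that generalization in disguise: your $h(t)=\sqrt{t}\,J_0(\sqrt{c}\,t)$ is $\sqrt{t}f(t)$ for the paper's $f(t)=J_0(\sqrt{\lambda}t)$, and your condition $h'\ge 0$ is identical to the paper's condition $l(t)=\frac{f'(t)}{f(t)}+\frac{1}{2t}\ge 0$, both reducing to $J_0(s)-2sJ_1(s)\ge 0$ on $[0,\lambda_0]$; the mollification of $\delta$ is also how the paper handles the measure. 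One inaccuracy: $\mu=-\Delta\delta$ is \emph{not} supported on the ridge in general (for the unit ball, $-\Delta\delta=(n-1)/|x|$ has full support), so the sign condition $h'\ge 0$ is needed on the whole range $(0,\|\delta\|_{L^\infty}]$ of $\delta$, not just on the range of $\delta|_K$. This is harmless here because the condition you verify does hold on the whole range, but the justification you give for why it suffices is wrong.

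The genuine gap is the optimality argument, which as written is false for a general convex domain. The quantity $\int_\Omega|\nabla u|^2dx-\frac14\int_\Omega u^2/\delta^2dx-c\int_\Omega u^2dx$ evaluated at $u=h(\delta)$ does not vanish: by (\ref{start}) it equals $\int_\Omega\delta\,|\nabla(u/\sqrt{\delta})|^2dx+\frac12\int_\Omega\frac{u^2}{\delta}d\mu-c\int_\Omega u^2dx$, and the first term is $c\int_\Omega\delta\,J_1(\sqrt{c}\,\delta)^2dx>0$, contradicting your claim that "the bulk Hardy integrand vanishes identically for $u_*$". Running the coarea formula with $P(t):=\mathcal{H}^{n-1}(\{\delta=t\})$ and integrating $\int_0^{\|\delta\|_\infty}h'(t)^2P(t)\,dt$ by parts, the Bessel equation kills the terms proportional to $P$, but a residual term of the form $-\int_0^{\|\delta\|_\infty}h(t)h'(t)P'(t)\,dt\ge 0$ survives; since $P$ is nonincreasing and $hh'\ge 0$, this is strictly positive unless $P$ is constant, i.e.\ unless $\Omega$ is (locally) a slab. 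Consequently, for a fixed bounded convex domain such as a ball, the best constant strictly exceeds $\lambda_0^2/\|\delta\|_{L^\infty}^2$, and no truncation of $u_*=\sqrt{\delta}\,J_0(\lambda_0\delta/\|\delta\|_{L^\infty})$ can drive the Rayleigh quotient down to $c$. The sharpness asserted in [AW] is sharpness of the universal constant $\lambda_0$ over the \emph{class} of convex domains with prescribed inradius, exhibited by a sequence of increasingly elongated slab-like domains; it is not a per-domain statement and cannot be proved by a minimizing sequence inside one fixed $\Omega$. If you want to prove the optimality clause you must either restrict to the slab (where your $u_*$, cut off in the tangential directions, does work) or run the elongating-domain argument.
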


 This  extends a result of H. Brezis and M. Marcus (see [BM]) which said that if $\Omega$ is a convex subset of $ \IR^n$ then 
 \[ \int_\Omega | \nabla u|^2dx \ge \frac{1}{4} \int_\Omega \frac{u^2}{\delta^2}dx + \frac{1}{4 diam^2(\Omega)} \int_\Omega u^2dx, \qquad u \in H_0^1(\Omega)\]  where  $ diam(\Omega)$ denotes the diameter of $ \Omega$.   Note that there are unbounded convex domains with infinite diameter but finite inradius; for example take a cylinder.  
 
 We establish a generalized version of this result.    Suppose $ \mu$ is a nonnegative nonzero locally finite measure in $ \Omega$ (possibly unbounded)  and $ 0 <E \in L^\infty(\Omega)$ is a solution to 
 \begin{eqnarray*}
 \L_A(E) &=& \mu \qquad \mbox{in $ \Omega$} \\
 | \nabla E|_A &=&1 \qquad \mbox{a.e. in $ \Omega$} \\
 E &=&0 \qquad \mbox{on $ \pOm$}.
 \end{eqnarray*} 
 We then have the following theorem. 
 
 \begin{thm} Suppose $E$ is as above.  Then 
 \[ \int_\Omega | \nabla u|_A^2dx \ge \frac{1}{4} \int_\Omega \frac{u^2}{E^2}dx  + \frac{\lambda_0^2}{\|E\|_{L^\infty}^2} \int_\Omega u^2dx, \] for all $ u \in C_c^\infty(\Omega)$.
 
 \end{thm}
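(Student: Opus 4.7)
The plan is to run the scheme of Theorem \ref{geninteriorimprov}(i) with the test function
\[ \phi(x) := J_0(\alpha E(x)), \qquad \alpha := \frac{\lambda_0}{R}, \qquad R := \|E\|_{L^\infty}. \]
Because $\lambda_0$ is strictly less than the first positive zero $j_{0,1}$ of $J_0$, we have $\alpha E \le \lambda_0 < j_{0,1}$, and hence $\phi>0$ throughout $\Omega$. For $u\in C_c^\infty(\Omega)$ the support of $u$ is compactly contained in $\Omega$, so by continuity $E$ is bounded below there; consequently $v:=u/\sqrt{E}$ is well defined on $\mathrm{supp}\,u$, and by Lipschitz composition so is $\phi$.

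I would first establish the basic identity
\[ \int_\Omega |\nabla u|_A^2\,dx - \frac{1}{4}\int_\Omega \frac{|\nabla E|_A^2}{E^2}u^2\,dx = \int_\Omega E|\nabla v|_A^2\,dx + \frac{1}{2}\int_\Omega \frac{u^2}{E}\,d\mu, \]
which is just (\ref{start}) applied to this $E$; the only subtlety relative to Lemma \ref{startingpoint} is that $E$ is merely Lipschitz and $\L_A(E)$ is a measure, so I would justify it by mollifying $E$ as in the proof of Lemma \ref{startingpoint}(ii) and passing to the limit using $|\nabla E|_A^2 = 1$ a.e.\ and the local finiteness of $\mu$.

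Next, I would decompose $v=\phi\psi$ (with $\psi$ compactly supported in $\Omega$) and integrate by parts the cross term in $E|\nabla v|_A^2$. Using $\nabla\phi=-\alpha J_1(\alpha E)\nabla E$, the identities $J_0'=-J_1$ and $J_1'(s)=J_0(s)-J_1(s)/s$, together with $|\nabla E|_A^2=1$ a.e.\ and the distributional equation $\L_A(E)=\mu$, a direct computation yields
\[ \int_\Omega E|\nabla v|_A^2\,dx = \int_\Omega E\phi^2|\nabla\psi|_A^2\,dx + \alpha^2\int_\Omega u^2\,dx - \alpha\int_\Omega u^2\,\frac{J_1(\alpha E)}{J_0(\alpha E)}\,d\mu. \]
The two miraculous cancellations arise from $J_1'(s)+J_1(s)/s=J_0(s)$, which converts the coefficient of $\int u^2\,dx$ into exactly $\alpha^2=\lambda_0^2/R^2$. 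Substituting this back into the identity consolidates every $\mu$-contribution into
\[ \int_\Omega \frac{u^2}{E\,J_0(\alpha E)}\Big[\tfrac{1}{2}J_0(\alpha E)-\alpha E\,J_1(\alpha E)\Big]\,d\mu. \]

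The role of Lamb's constant enters precisely here: since $\alpha E\in[0,\lambda_0]$ and $\lambda_0$ is by definition the \emph{first} positive zero of $s\mapsto J_0(s)-2sJ_1(s)$, which equals $1$ at $s=0$, the bracket is nonnegative on $[0,R]$. Together with $\int_\Omega E\phi^2|\nabla\psi|_A^2\,dx\ge 0$, this gives the claimed inequality, since $\mu\ge 0$.

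The main technical obstacle is the distributional calculus in the step computing $\mathcal{L}_A(\phi)$: the chain rule has to be applied rigorously when $E$ is only Lipschitz and $\mathcal{L}_A(E)$ is only a measure, and the measure-valued term $-\alpha J_1(\alpha E)\mu$ must be tracked carefully through the integration by parts so that it cancels correctly against the $\tfrac12 E^{-1}\,d\mu$ term from the basic identity. A mollification of $E$ as in Lemma \ref{startingpoint}(ii) handles this, since the nonnegativity of $J_0(s)-2sJ_1(s)$ on $[0,\lambda_0]$ is stable under such approximation.
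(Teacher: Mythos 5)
Your proposal is correct and follows essentially the same route as the paper: the paper also takes the test function $f(E_\E)=J_0(\sqrt{\lambda}\,E_\E)$ with $\lambda=\lambda_0^2/\|E\|_{L^\infty}^2$ inside the mollified version of the improvement machinery, uses the Bessel equation $J_0''+J_0'/s+J_0=0$ together with $|\nabla E|_A=1$ a.e.\ to produce the constant $\lambda_0^2/\|E\|_{L^\infty}^2$, and uses the nonnegativity of $J_0(s)-2sJ_1(s)$ on $[0,\lambda_0]$ (the definition of Lamb's constant) to discard the measure term. Your bracket $\tfrac{1}{2}J_0(\alpha E)-\alpha E\,J_1(\alpha E)$ is exactly the paper's coefficient $l(t)=f'(t)/f(t)+1/(2t)$ after clearing denominators, so the two arguments coincide.
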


  \begin{proof} Let $ E$ be as above.   Now extend $E$ to all of $ \IR^n$ by setting $ E=0$ on $ \IR^n \backslash \Ov$, let $ E_\E$ denote the $ \E$ mollification of $E$ and $ F_\E:=\L_A(E_\E)$.  Returning to the proof of theorem \ref{exampleofimprovements} (ii) we have 
  
   \[
  \int_\Omega | \nabla  u |_A^2dx - \frac{1}{4} \int_\Omega \frac{ | \nabla E_\E|_A^2}{E_\E^2} u^2dx \ge \int_\Omega \frac{ | \nabla E_\E|_A^2}{f(E_\E)} \left( - f''(E_\E) - \frac{f'(E_\E)}{E_\E} \right) u^2dx +I_\E, \] where  
 \[ I_\E:= \int_\Omega \left( \frac{ f'(E_\E)}{f(E_\E)} + \frac{1}{2 E_\E} \right) u^2 F_\E dx,\]  
   for $ u \in C_c^\infty(\Omega)$ and $ 0 < f \in C^2( (0,\|E\|_{L^\infty}])$.   Now set $ \lambda:= \frac{ \lambda_0^2}{\|E\|_{L^\infty}^2}$ where $ \lambda_0$ is Lambs constant and define $f(t):=J_0(\sqrt{\lambda}t)$.   It is possible to show that 
   \[ f(t) >0, \qquad \frac{1}{f(t)} \left( - f''(t) - \frac{f'(t)}{t} \right) = \lambda, \qquad l(t):=\frac{f'(t)}{f(t)}+ \frac{1}{2t} \ge 0 \] in $ (0, \| E \|_{L^\infty})$.   Fixing $ u\in C_c^\infty(\Omega)$ and   subbing this $f$ into the above gives 
   \[ \int_\Omega | \nabla  u |_A^2dx - \frac{1}{4} \int_\Omega \frac{ | \nabla E_\E|_A^2}{E_\E^2} u^2dx \ge \frac{\lambda_0^2}{\|E\|_{L^\infty}^2} \int_\Omega | \nabla E_\E|_A^2  u^2 dx +I_\E,\] after noting that $ \| E_\E\|_{L^\infty} \le \|E \|_{L^\infty}$ and  where $I_\E:= \int_\Omega l(E_\E)u^2 F_\E dx$.    It is possible to show that $ l \in C^\infty( (0, \|E\|_{L^\infty} ])$.    A standard argument shows that $ l(E_\E) u \rightarrow l(E) u $ in $H_0^1(\Omega)$ and $ u F_\E dx \rightharpoonup u \mu $ in $H^{-1}(\Omega)$ and hence one can conclude that $ \liminf_{\E \searrow 0}I_\E \ge 0$.    Passing to the limit (as $ \E \searrow 0$) in the remaining integrals gives the desired result. 
  \end{proof}

   We now look at improvements of the weighted generalized Hardy inequalities.   The next theorem allows us to transfer our knowledge of improvements from the non-weighted case to the weighted case, at least in the case that $E$ is an interior weight.

  \begin{thm} \label{equiv} (\textbf{Weighted interior improvements}) Suppose $ E $ is an interior weight on $ \Omega$ and $ 0 \le V \in C^\infty( \Omega \backslash K)$.  Then the following are equivalent: \\
  (i) \quad  For all $ u \in H_0^1(\Omega)$ 
  \begin{equation} \label{A}
   \int_\Omega | \nabla u|_A^2dx \ge \frac{1}{4} \int_\Omega \frac{| \nabla E|_A^2}{E^2} u^2dx + \int_\Omega V u^2dx.
   \end{equation}
   
  (ii) \quad For all $ t \neq \frac{1}{2}$ and $ u \in X_t$ 
  \begin{equation} \label{B}
   \int_\Omega E^{2t} | \nabla u|_A^2dx \ge ( t - \frac{1}{2})^2 \int_\Omega | \nabla E|_A^2 E^{2t-2} u^2dx + \int_\Omega V E^{2t} u^2dx.
   \end{equation}
    
  (iii) \quad For all $ u \in X_\frac{1}{2} $ 
  \begin{equation} \label{C}
   \int_\Omega E | \nabla u|_A^2dx \ge \int_\Omega VE u^2dx.
   \end{equation}
  
  \end{thm}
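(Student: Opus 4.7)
The plan is to link the three inequalities through the single change of variables $w = E^t u$, applied on test functions $u \in C_c^{0,1}(\Omega \backslash K)$ and then extended to the relevant function space by density.  Under this substitution, generic $t \neq 1/2$ converts (i) into (ii), the degenerate case $t = 1/2$ converts (i) into (iii), and taking $t = 0$ in (ii) recovers (i).  This is precisely the strategy already used to derive Theorem~\ref{weight1th} from Theorem~\ref{first}; the only new ingredient here is to carry the potential $V$ through the same computation.

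For $(i) \Rightarrow (ii)$, fix $t \neq 1/2$ and $u \in C_c^{0,1}(\Omega \backslash K)$.  Because $E \in C^\infty(\Omega \backslash K)$ is bounded above and below on $\mathrm{supp}(u)$, the function $w := E^t u$ again lies in $C_c^{0,1}(\Omega \backslash K) \subset \HOO$, so (i) may be applied to $w$.  Expanding
\[ |\nabla w|_A^2 = E^{2t}|\nabla u|_A^2 + 2 t E^{2t-1} u \, A\nabla E \cdot \nabla u + t^2 E^{2t-2} u^2 |\nabla E|_A^2, \]
and integrating the cross term by parts via the identity $\mathrm{div}(E^{2t-1} A \nabla E) = (2t-1) E^{2t-2} |\nabla E|_A^2 - E^{2t-1} \mu$, the measure contribution $\int_\Omega u^2 E^{2t-1} \, d\mu$ drops out since $u$ vanishes in a neighbourhood of $K = \mathrm{supp}(\mu)$.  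Collecting terms and using $\tfrac{1}{4} - t + t^2 = (t - \tfrac{1}{2})^2$ yields (ii) on $C_c^{0,1}(\Omega \backslash K)$, and density extends it to $X_t$.  The reverse $(ii) \Rightarrow (i)$ is just the case $t = 0$: the density remark following the definition of interior weight (which applies because $\dim_{box}(K) < n-2$) gives $X_0 = \HOO$, so (ii) at $t = 0$ is literally (i).

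For $(i) \Leftrightarrow (iii)$ the same substitution with $t = 1/2$ achieves a clean cancellation: the coefficient $t(1-t) = 1/4$ produced by the cross-term integration by parts exactly matches the $\tfrac{1}{4}\int \tfrac{|\nabla E|_A^2}{E^2} w^2$ on the right of (i), so what remains after the algebra is precisely (iii) on $C_c^{0,1}(\Omega \backslash K)$.  The reverse implication uses $u := E^{-1/2} w$ in (iii) and the analogous by-parts calculation to recover (i).  The one point to monitor -- the main obstacle, such as it is -- is justifying the integration by parts and verifying that the measure term vanishes; both rest on the fact that interior-weight test functions vanish near $K$ and that $E$ is smooth and uniformly positive on their support, so no boundary or singular contributions arise.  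Density then extends (i) to all of $\HOO$ and (iii) to all of $X_{1/2}$, completing the equivalence.
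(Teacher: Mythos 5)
Your proof is correct and rests on the same core mechanism as the paper's: the substitution $w=E^{t}u$ on test functions in $C_c^{0,1}(\Omega\backslash K)$, integration by parts in which the measure term $\int u^2E^{2t-1}\,d\mu$ vanishes because $u$ is supported away from $K$, and extension by density plus Fatou. The only (harmless) difference is in the routing of the implications: the paper proves the cycle (i)$\Rightarrow$(ii)$\Rightarrow$(iii)$\Rightarrow$(i), obtaining (iii) from (ii) by fixing $u$ and letting $t\nearrow\frac{1}{2}$, whereas you obtain (ii)$\Rightarrow$(i) by the $t=0$ specialization (using $X_0=H_0^1(\Omega)$) and (i)$\Rightarrow$(iii) by direct substitution at $t=\frac{1}{2}$ --- both routes are valid and of essentially equal length.
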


   Using similar arguments one can obtain a version of theorem \ref{equiv} for the case when $ E $ is a boundary weight on $ \Omega$;  we omit the details since the results is not as clean.

  \begin{proof} Let $ E  $ be an interior weight on $ \Omega$ and $ 0 \le V \in C^\infty( \Omega \backslash K)$.  \\
  (i) $ \Rightarrow $ (ii) \\
  Suppose (i) holds, $ t < \frac{1}{2}$, $ u  \in C_c^{0,1}(\Omega \backslash K)$ and define $ v:=E^t u \in C_c^{0,1}(\Omega \backslash K)$.    Then putting $v$ into (\ref{A}) and performing some integration by parts gives (\ref{B}). \\
  (ii) $\Rightarrow$ (iii) \\
  Suppose (ii) holds.  Let  $ u \in C_c^{0,1}(\Omega \backslash K) $ which is an element of $ X_t$ for all $ t $.  Now using (\ref{B}) for this $u$ and sending $ t \nearrow \frac{1}{2}$ gives (\ref{C}).  \\
  (iii) $\Rightarrow$(i) \\
  Suppose (iii) holds, $ u \in C_c^{0,1}(\Omega \backslash K)$ and $ v:=E^\frac{-1}{2} u \in C_c^{0,1}(\Omega \backslash K)$.  Putting $v$ into (\ref{C}) and integrating by parts gives (\ref{A}) for all $ u \in C_c^{0,1}(\Omega \backslash K)$.
 
 \end{proof}

 \subsection{Hardy inequalities valid for $ u \in H^1(\Omega)$}
   
   Let $K$ be a compact subset of $ \Omega $ with $ dim_{box}(K) < n-2$.  Standard arguments show that $ C_c^{0,1}(\Ov \backslash K)$ is dense in $H^1(\Omega)$.    
   
  \begin{dfn} We say $E$ is a Neumann interior weight on $\Omega$ provided: there exists some compact $K \subset \Omega$, $dim_{box}(K)<n-2$, $ E \in C^\infty(\Ov \backslash K)$, $ \inf_\Omega E>0$, $\L_A(E)+E$ is a nonnegative nonzero measure $ \mu$ whose support is $K$, $ E=\infty$ on $K$ and $ A \nabla E \cdot \nu =0$ on $ \pOm$ where $ \nu(x)$ denotes the outward normal vector at $ x \in \pOm$.  
 
  \end{dfn}

  \begin{thm} \label{nonzero} Suppose $E$ is a Neumann interior weight on $\Omega$.  Then \\
  \flushleft
  (i) \quad For  $ u \in C_c^{0,1}(\Ov \backslash K) $  and $ v:=E^\frac{-1}{2} u $ we have 
  \begin{equation} \label{nz1}
  \int_\Omega | \nabla u|_A^2dx + \frac{1}{2} \int_\Omega u^2 dx \ge \frac{1}{4} \int_\Omega \frac{| \nabla E|_A^2}{E^2} u^2dx + \int_\Omega E | \nabla v|_A^2dx. 
  \end{equation} 
  
  (ii)  \begin{equation}  \label{OOO}
   \int_\Omega | \nabla u|_A^2dx + \frac{1}{2} \int_\Omega u^2dx \ge \frac{1}{4} \int_\Omega \frac{| \nabla E|_A^2}{E^2} u^2dx,
   \end{equation} holds for all $ u \in H^1(\Omega)$.  Moreover $ \frac{1}{4}$ and $ \frac{1}{2}$ are optimal in the sense that if one fixes $ \frac{1}{4}$ then you can do no better than $ \frac{1}{2}$ and vice-versa.    Also the inequality is not attained.

  \end{thm}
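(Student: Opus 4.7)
\bigskip
\noindent \textbf{Proof proposal for Theorem \ref{nonzero}.}

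The plan for (i) is to run the same computation that produced (\ref{start}), but now exploit the Neumann condition and the equation $\L_A(E)+E=\mu$ satisfied by a Neumann interior weight. Concretely, for $u\in C_c^{0,1}(\Ov\backslash K)$ and $v=E^{-1/2}u$, the pointwise identity
\[
|\nabla u|_A^2-\frac{|\nabla E|_A^2}{4E^2}u^2=E|\nabla v|_A^2+\tfrac{1}{2}A\nabla E\cdot\nabla(v^2)
\]
still holds away from $K$. I would integrate over $\Omega$ and integrate by parts on the last term. Because $u$ is supported away from $K$, we can test $\L_A(E)+E=\mu$ against $v^2=u^2/E\in C_c^{0,1}(\Ov\backslash K)$; the $\mu$ contribution drops since $\mathrm{supp}\,\mu=K$, the Neumann condition $A\nabla E\cdot \nu=0$ on $\pOm$ kills the boundary term, and what remains is
\[
\tfrac{1}{2}\int_\Omega A\nabla E\cdot\nabla(v^2)\,dx=\tfrac{1}{2}\int_\Omega \L_A(E)v^2\,dx=-\tfrac{1}{2}\int_\Omega E v^2\,dx=-\tfrac{1}{2}\int_\Omega u^2\,dx.
\]
Substituting produces exactly (\ref{nz1}).

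For (ii), the inequality (\ref{OOO}) on $C_c^{0,1}(\Ov\backslash K)$ is immediate from (i) after dropping the non-negative term $\int_\Omega E|\nabla v|_A^2\,dx$. To extend to $u\in H^1(\Omega)$ I would use density of $C_c^{0,1}(\Ov\backslash K)$ in $H^1(\Omega)$ (valid since $\dim_{box}(K)<n-2$, as noted before the statement) and Fatou's lemma to pass to the limit in the weighted term $\int_\Omega \frac{|\nabla E|_A^2}{E^2}u^2\,dx$. Non-attainment will be read off (\ref{nz1}): equality in (\ref{OOO}) forces $v=E^{-1/2}u$ to be constant, hence $u=c\sqrt E$ up to a scalar; since $E=\infty$ on $K$ and the asymptotics of $E$ near $K$ are those of a fundamental-solution-type singularity (forced by $\L_A(E)=\mu-E$ near $K$ with $\dim_{box}(K)<n-2$), one checks $\sqrt E\notin H^1(\Omega)$, so no non-trivial extremizer exists.

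For optimality of $\frac{1}{2}$ once $\frac{1}{4}$ is fixed, the plan is to reuse (i) together with the explicit family $u_\alpha:=E^\alpha$ (suitably truncated near $K$ to land in $H^1$, then pass to the limit) for $\alpha\nearrow\frac{1}{2}$. For such $\alpha<\frac{1}{2}$, multiplying the equation $\L_A(E)+E=\mu$ by $E^{2\alpha-1}$ and integrating by parts—with the boundary term vanishing by Neumann and the $\mu$ term vanishing because $E^{2\alpha-1}\to 0$ on $K$—yields the clean identity
\[
(1-2\alpha)\int_\Omega E^{2\alpha-2}|\nabla E|_A^2\,dx=\int_\Omega E^{2\alpha}\,dx.
\]
Plugging $u_\alpha$ into (\ref{nz1}) gives $v=E^{\alpha-1/2}$ and
\[
\frac{\int_\Omega|\nabla u_\alpha|_A^2\,dx+\frac{1}{2}\int_\Omega u_\alpha^2\,dx-\frac{1}{4}\int_\Omega\frac{|\nabla E|_A^2}{E^2}u_\alpha^2\,dx}{\int_\Omega u_\alpha^2\,dx}=\frac{\int_\Omega E|\nabla v|_A^2\,dx}{\int_\Omega u_\alpha^2\,dx}=\frac{(1-2\alpha)}{4}\xrightarrow[\alpha\nearrow 1/2]{}0,
\]
so $\frac{1}{2}$ cannot be replaced by anything smaller. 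For optimality of $\frac{1}{4}$ with $\frac{1}{2}$ fixed, I would concentrate a minimizing sequence for the interior-weight inequality (\ref{hardy}) onto $K$: the term $\frac{1}{2}\int u_n^2\,dx$ is bounded while $\int \frac{|\nabla E|_A^2}{E^2}u_n^2\,dx\to\infty$, so the problem reduces to Theorem \ref{first}(i) applied locally near $K$, where $\frac{1}{4}$ is already known to be optimal.

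The main technical obstacle I anticipate is the rigorous truncation argument needed in both optimality steps: one must produce $H^1$-admissible test functions that mimic $E^\alpha$ (or concentrate on $K$) without losing control of the integrations by parts used against $\L_A(E)+E=\mu$. This is where the hypothesis $\dim_{box}(K)<n-2$ is essential, both to justify the density statement underlying (ii) and to guarantee that suitable Lipschitz cutoffs around $K$ contribute negligibly in the relevant integrals so that the identity above survives the limit $\alpha\nearrow\frac{1}{2}$.
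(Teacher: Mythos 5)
Your proposal follows the paper's proof almost exactly: part (i) is the same pointwise identity integrated by parts against $\L_A(E)=\mu-E$ with the Neumann condition killing the boundary term; part (ii) uses density of $C_c^{0,1}(\Ov\backslash K)$ in $H^1(\Omega)$; non-attainment comes from $\sqrt{E}\notin H^1(\Omega)$; and the optimality of $\tfrac{1}{2}$ uses the family $E^{\alpha}$, $\alpha\nearrow\tfrac{1}{2}$, together with the identity $(1-2\alpha)\int_\Omega E^{2\alpha-2}|\nabla E|_A^2\,dx=\int_\Omega E^{2\alpha}\,dx$, which is precisely the paper's (\ref{hhh}).

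The one place you depart from the paper is the optimality of $\tfrac{1}{4}$, and there your argument is both unnecessary and not quite sound as stated: you propose to concentrate a minimizing sequence for (\ref{hardy}) near $K$ and invoke Theorem \ref{first}(i), but that theorem concerns weights with $\L_A(E)$ a nonnegative measure and test functions in $H_0^1$, whereas here $\L_A(E)=\mu-E$ changes sign and the natural test functions need not vanish on $\pOm$, so the reduction would need to be spelled out. The paper instead reuses the same family $E^{\alpha}$: your own identity shows $\int_\Omega E^{2\alpha-2}|\nabla E|_A^2\,dx\to\infty$ while $\int_\Omega E^{2\alpha}\,dx$ stays bounded as $\alpha\nearrow\tfrac{1}{2}$, so
\[
\frac{\int_\Omega |\nabla E^{\alpha}|_A^2\,dx+\frac{1}{2}\int_\Omega E^{2\alpha}\,dx}{\int_\Omega \frac{|\nabla E|_A^2}{E^2}E^{2\alpha}\,dx}=\alpha^2+\frac{1-2\alpha}{2}\longrightarrow \frac{1}{4},
\]
which settles that constant with no new construction. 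Likewise, the same identity lets you conclude $\sqrt{E}\notin H^1(\Omega)$ directly (let $\alpha\nearrow\tfrac{1}{2}$: the right side tends to $\int_\Omega E\,dx>0$, forcing $\int_\Omega E^{-1}|\nabla E|_A^2\,dx=\infty$), so there is no need to appeal to fundamental-solution asymptotics near $K$. With those two substitutions your argument coincides with the paper's.
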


 One can again view the best constants in a different manner which is analogous to theorem \ref{alpha}; we omit the details.

\begin{proof} 
Let $E$ be a Neumann interior weight on $ \Omega$. \\ (i) \quad Let $ u \in C_c^{0,1}(\Ov \backslash K)$ and define $v:= E^\frac{-1}{2} u$. 
 Then \[
  | \nabla u|_A^2 = E | \nabla v|_A^2 + \frac{ | \nabla E|_A^2}{4 E^2} u^2 + v \nabla v \cdot A \nabla E, \]
  and integrating this over $ \Omega $ gives  
 \begin{equation} \label{opp}
\int_\Omega | \nabla u|_A^2 dx+ \frac{1}{2} \int_\Omega  u^2dx =\frac{1}{4} \int_\Omega \frac{ | \nabla E|_A^2}{E^2} u^2dx + \int_\Omega E | \nabla v|_A^2 dx.
\end{equation} 

(ii) \quad Using (i) and the fact that $ C_c^{0,1}(\Ov \backslash K)$ is dense in $H^1(\Omega)$ one obtains (\ref{OOO}) for all $ u \in H^1(\Omega)$. \\ We now show the constants are optimal. 
  We first show that $ E^t \in H^1(\Omega)$ for $ 0 < t < \frac{1}{2}$.   \\  As in the proof of lemma \ref{test} the following calculations are only formal but they can be justified as hinted at there; by first regularizing the measure, obtaining approximate solutions and passing to the limit.   Fix $ 0 < t < \frac{1}{2}$ and multiply $ \L_A(E)+E=\mu$ by $ E^{2t-1}$ and integrate over $ \Omega$ using integration by parts and the fact that $ E=\infty$ on $K$ along with the boundary conditions of $E$ to see that 
\begin{equation} \label{hhh}
\int_\Omega E^{2t}dx = (1-2t) \int_\Omega E^{2t-2} | \nabla E|_A^2dx,
\end{equation} which shows that $ E^t \in H^1(\Omega)$ for $ 0 < t < \frac{1}{2}$.   To show the constants are optimal we will use as a minimizing sequence $ E^t $ as $ t \nearrow \frac{1}{2}$. A computation shows
\[ \frac{\int_\Omega | \nabla E^t|_A^2dx + \frac{1}{2} \int_\Omega E^{2t} dx}{  \int_\Omega \frac{| \nabla E|_A^2}{E^2} E^{2t}  dx}= t^2+\frac{1}{2}-t, \] and we see that $ \frac{1}{4}$ is optimal.    One similarly shows $ \frac{1}{2}$ is optimal. \\ To show the inequality does not attain we, as usual, just hold on to the extra term that we dropped in the above calculations.  This term is positive for non-zero $ u \in H^1(\Omega)$ provided $ E^\frac{1}{2} \notin H^1(\Omega)$ which is the case after one considers (\ref{hhh}).

\end{proof}

  We now examine weighted versions of (\ref{OOO}).  Suppose $E$ is a Neumann interior weight on $ \Omega$  and as usual we let $K$ denote the support of $ \mu$.   For $ t \neq \frac{1}{2}$ and $ u \in C^{0,1}_c(\Ov \backslash K)$ we define 
  \begin{equation*}
   \| u \|_t^2:= \left\{
   \begin{array}{lr}
    \int_\Omega E^{2t} | \nabla u|^2dx + \int_\Omega E^{2t} u^2 dx & \qquad t < \frac{1}{2} \\
   \int_\Omega E^{2t} | \nabla u|^2dx  & \qquad t > \frac{1}{2},
   \end{array}
   \right.
   \end{equation*}
    and we let $Y_t$ denote the completion of $ C^{0,1}_c(\Ov \backslash K)$ with respect to this norm.   We then have the following theorem. 
  
  \begin{thm} Suppose $E$ is a Neumann interior weight on $ \Omega$  and $ t \neq \frac{1}{2}$.  Then 
  \[ \int_\Omega E^{2t}| \nabla u|_A^2dx + \left(\frac{1}{2}-t \right) \int_\Omega E^{2t} u^2 dx \ge \left( t - \frac{1}{2} \right)^2 \int_\Omega E^{2t-2} | \nabla E|_A^2 u^2 dx,\] for all $ u \in Y_t$.  Moreover the constants are optimal and not attained. 
  
    \end{thm}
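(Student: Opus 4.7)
The plan is to derive this weighted inequality from Theorem \ref{nonzero} by the substitution $w := E^t u$, in the same spirit that Theorem \ref{weight1th} was reduced to Theorem \ref{first}, and then to use the reversibility of that substitution to transfer both optimality and non-attainment from the unweighted case back to the weighted one.

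First I would take $u \in C_c^{0,1}(\Ov \backslash K)$, which is dense in $Y_t$, and set $w := E^t u \in C_c^{0,1}(\Ov \backslash K) \subset H^1(\Omega)$ since $E$ is smooth and bounded below on the support of $u$. Plugging $w$ into inequality (\ref{OOO}) of Theorem \ref{nonzero} and expanding
\[
|\nabla w|_A^2 = E^{2t}|\nabla u|_A^2 + t^2 E^{2t-2}|\nabla E|_A^2 u^2 + t E^{2t-1} A\nabla E \cdot \nabla(u^2),
\]
together with $w^2/E^2 = E^{2t-2} u^2$ and $w^2 = E^{2t}u^2$, leaves me to integrate the cross-term by parts. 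The boundary integrand on $\pOm$ vanishes thanks to the Neumann condition $A\nabla E \cdot \nu = 0$, and the contribution from $\L_A(E) = \mu - E$ reduces to $-t\int_\Omega E^{2t}u^2\,dx$ since $u$ vanishes on a neighborhood of $K = \mathrm{supp}\,\mu$. Collecting terms and using $t^2 - t(2t-1) + \tfrac{1}{4} = (t-\tfrac{1}{2})^2$ yields exactly the claimed inequality on $C_c^{0,1}(\Ov \backslash K)$; density promotes it to all of $Y_t$.

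For optimality, let $(v_m) \subset C_c^{0,1}(\Ov \backslash K)$ be a minimizing sequence for (\ref{OOO}), i.e.,
\[
D_m := \frac{\int_\Omega |\nabla v_m|_A^2\,dx + \tfrac{1}{2}\int_\Omega v_m^2\,dx}{\int_\Omega E^{-2}|\nabla E|_A^2 v_m^2\,dx} \longrightarrow \tfrac{1}{4};
\]
such a sequence exists from Theorem \ref{nonzero} combined with a standard cut-off near $K$ applied to the powers $E^s$, $s \nearrow 1/2$. Set $u_m := E^{-t}v_m \in Y_t$. The auxiliary identity
\[
2\int_\Omega E^{-1} v_m A\nabla E \cdot \nabla v_m\,dx = \int_\Omega E^{-2}|\nabla E|_A^2 v_m^2\,dx - \int_\Omega v_m^2\,dx,
\]
obtained by the same integration by parts applied to $v_m^2$, reduces the Rayleigh quotient at $u_m$ to $D_m + t^2 - t \to (t-\tfrac{1}{2})^2$, proving optimality. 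For non-attainment, I would observe that the substitution is reversible: equality in the weighted inequality at a nonzero $u^* \in Y_t$ would give equality in (\ref{OOO}) at $w^* := E^t u^* \in H^1(\Omega)$. The membership $w^* \in H^1(\Omega)$ comes from the finiteness of $\int E^{2t}(u^*)^2\,dx$ (forced by equality, since a divergent weighted $L^2$ term would make the LHS equal $-\infty$ when $t > 1/2$, while it is built into the $Y_t$-norm when $t < 1/2$) together with a Cauchy--Schwarz control of the cross-term in $|\nabla w^*|_A^2$; this contradicts the non-attainment in Theorem \ref{nonzero}.

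The principal technical obstacle will be justifying the integration-by-parts identities across $K$: although $u$ vanishes near $K$, the measure $\mu$ is singular there and $E$ is unbounded, so the mollification device $E_\E$ from Lemma \ref{startingpoint} must be redone with the additional weight $E^{2t-1}$ and the Neumann boundary behavior carefully tracked. A minor secondary point is the selection of the minimizing sequence $(v_m)$ inside $C_c^{0,1}(\Ov \backslash K)$, since the natural candidate $E^s$ with $s\nearrow 1/2$ is not compactly supported away from $K$; a cut-off argument using $dim_{box}(K) < n-2$ (as in Remark~1.1) is needed.
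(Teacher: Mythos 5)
Your proposal is correct and follows essentially the same route as the paper: substitute $E^t u$ into the $H^1$ Hardy inequality, integrate the cross term by parts using $\L_A(E)=\mu-E$ and the Neumann condition, transfer the minimizing sequence from the unweighted case via $u_m=E^{-t}v_m$, and reduce non-attainment to the fact that $\sqrt{E}\notin H^1(\Omega)$. The only cosmetic difference is that the paper substitutes into (\ref{nz1}) rather than (\ref{OOO}), so the remainder term $\int_\Omega E\,|\nabla(E^{t-\frac{1}{2}}u)|_A^2\,dx$ is carried along explicitly and non-attainment is immediate, without needing your transfer-back argument.
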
 Note in particular that for $ t > \frac{1}{2}$ one only has a gradient term on the left hand side and so we can conclude that $ C^\infty(\Ov) $ is not contained in $ Y_t$ for $ t > \frac{1}{2}$.

  \begin{proof} Suppose $E$ is a Neumann interior weight on $ \Omega$, $ t \neq \frac{1}{2}$ and let $ u \in C_c^{0,1}(\Ov \backslash K)$.   Putting $ E^t u$ into (\ref{nz1}) gives  
  \[  \int_\Omega E^{2t} | \nabla u|_A^2dx + \left(\frac{1}{2}-t \right) \int_\Omega E^{2t} u^2 dx \ge \left( t - \frac{1}{2} \right)^2 \int_\Omega E^{2t-2} | \nabla E|_A^2 u^2 dx + \int_\Omega E | \nabla w|_A^2 dx \] where $ w:= E^{t - \frac{1}{2}} u$.    To show the constants are optimal one takes the same approach as in theorem \ref{weight1th}.  We now show the optimal constants are not obtained.   Suppose we have equality for some nonzero $u \in Y_t$.   Then it is easily seen that $ \sqrt{E} \in H^1(\Omega)$ which we know is not the case. 
  
  \end{proof}

   We now examine improvements of (\ref{OOO}). 
   \begin{thm} Suppose $E$ is a Neumann interior weight on $ \Omega$. Then  \\
   (i) \qquad Suppose $  V \in C^\infty( \Omega \backslash K)$ and there exists some $ 0 < \phi \in C^2(\Omega \backslash K) \cap C^1(\Ov \backslash K)$ such that 
   \begin{equation} \label{EQ1} -\L_A(\phi) + \frac{ A \nabla E \cdot \nabla \phi}{E} + V \phi \le 0 \qquad \mbox{ in $ \Omega \backslash K$},
   \end{equation} with $ A \nabla \phi \cdot \nu \ge 0$ on $ \pOm$.   Then 
   \[ \int_\Omega | \nabla u|_A^2 + \frac{1}{2} \int_\Omega u^2 dx - \frac{1}{4} \int_\Omega \frac{|\nabla E|_A^2}{E^2}u^2 dx \ge \int_\Omega V(x) u^2 dx, \] for all $ u \in H^1(\Omega)$.  \\
   (ii) \quad Suppose $ 0 \le V \in C^\infty(\Ov \backslash K)$ is such that 
   \[  \int_\Omega | \nabla u|_A^2 + \frac{1}{2} \int_\Omega u^2 dx - \frac{1}{4} \int_\Omega \frac{|\nabla E|_A^2}{E^2}u^2 dx \ge \int_\Omega V(x) u^2 dx, \] holds for all $ u \in H^1(\Omega)$.  In addition we assume that $ \{x \in \Omega: E(x)<t\}$ is connected for sufficiently large $t$.  Then there exists some $ 0 < \theta \in C^\infty(\Ov \backslash K)$ such that 
   \begin{equation} \label{EQ2} -\L_A(\theta)- \frac{\theta}{2} + \frac{| \nabla E|_A^2}{4E^2} \theta +V \theta \le 0 \qquad \mbox{in $ \Omega \backslash K$},
   \end{equation} with $ A \nabla \theta  \cdot \nu =0$ on $ \pOm$.

   \end{thm}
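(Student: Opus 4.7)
The plan is to mirror the strategy of Theorems \ref{geninteriorimprov} and \ref{boundaryimprovements}, but keeping careful track of boundary contributions on $\pOm$ since now $u$ does not vanish there, and, in the converse direction, producing an auxiliary function satisfying a homogeneous Neumann condition. Throughout, the density of $C_c^{0,1}(\Ov \backslash K)$ in $H^1(\Omega)$ plays the role that density of $C_c^{0,1}(\Omega \backslash K)$ in $H_0^1(\Omega)$ played in the Dirichlet case.

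For (i), I would start from the identity (\ref{opp}) of Theorem \ref{nonzero} applied to $u \in C_c^{0,1}(\Ov \backslash K)$, which reads
\[
\int_\Omega | \nabla u|_A^2 dx + \frac{1}{2} \int_\Omega u^2 dx - \frac{1}{4} \int_\Omega \frac{| \nabla E|_A^2}{E^2} u^2 dx = \int_\Omega E | \nabla v|_A^2 dx,
\]
with $v := E^{-1/2} u$.  Writing $v = \phi \psi$ where $\phi$ is the function from the hypothesis and $\psi := u / (E^{1/2} \phi) \in C_c^{0,1}(\Ov \backslash K)$, I expand $E | \nabla v|_A^2$ and integrate the cross term $\int_\Omega E \phi \, A \nabla \phi \cdot \nabla(\psi^2) dx$ by parts exactly as in the proof of Theorem \ref{geninteriorimprov}(i).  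The only new feature is a boundary contribution
\[
\int_{\pOm} \psi^2 E \phi \, A \nabla \phi \cdot \nu \, dS \ge 0,
\]
which is nonnegative by the assumption $A \nabla \phi \cdot \nu \ge 0$ on $\pOm$ and can therefore be dropped.  The remaining interior terms collapse, via (\ref{EQ1}), to $\int_\Omega V u^2 dx$.  Density in $H^1(\Omega)$ together with Fatou's lemma then extends the inequality to all of $H^1(\Omega)$.

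For (ii), I would adapt the exhaustion argument from Theorem \ref{geninteriorimprov}(ii).  Choose $t_m \nearrow \infty$ so that $\Omega_m := \{x \in \Omega: E(x) < t_m\}$ is connected for all large $m$, and split $\partial \Omega_m = \Gamma_m^D \cup \Gamma_m^N$ with $\Gamma_m^D := \{E = t_m\}$ (interior Dirichlet part) and $\Gamma_m^N := \pOm \cap \overline{\Omega_m}$ (boundary Neumann part).  Consider the mixed-boundary eigenvalue problem
\[
\lambda_m := \inf \left\{ \int_{\Omega_m} | \nabla \theta|_A^2 + \left( \frac{1}{2} - \frac{| \nabla E|_A^2}{4 E^2} - V \right) \theta^2 dx \; : \; \theta \in H^1_{\Gamma_m^D}(\Omega_m), \; \int_{\Omega_m} \theta^2 dx = 1 \right\},
\]
where $H^1_{\Gamma_m^D}(\Omega_m)$ consists of those $H^1(\Omega_m)$ functions with vanishing trace on $\Gamma_m^D$.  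Standard minimization and the strong maximum principle produce a first eigenfunction $\theta_m > 0$ satisfying
\[
-\L_A(\theta_m) - \tfrac{1}{2} \theta_m + \tfrac{| \nabla E|_A^2}{4 E^2} \theta_m + V \theta_m = -\lambda_m \theta_m \qquad \mbox{in $\Omega_m$},
\]
with $A \nabla \theta_m \cdot \nu = 0$ arising naturally on $\Gamma_m^N$.  Extending $\theta_m$ by zero across $\Gamma_m^D$ gives an $H^1(\Omega)$ function, so the hypothesized Hardy inequality forces $\lambda_m \ge 0$; and since the admissible class grows with $m$, the sequence $\lambda_m$ is decreasing, converging to some $\lambda \ge 0$.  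After normalizing $\theta_m(x_0) = 1$ at a fixed $x_0$, Harnack's inequality and elliptic regularity (including boundary regularity on the Neumann part) produce uniform $C^{1,\alpha}_{loc}(\Ov \backslash K)$ bounds; a diagonal extraction yields a limit $\theta > 0$ satisfying (\ref{EQ2}) together with $A \nabla \theta \cdot \nu = 0$ on $\pOm$, and a bootstrap upgrades $\theta$ to $C^\infty(\Ov \backslash K)$.

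The main obstacle lies in part (ii): guaranteeing enough regularity of $\theta_m$ at the ``edge'' where $\Gamma_m^D$ meets $\Gamma_m^N$ so that the Neumann condition survives in the limit.  I would handle this by either slightly smoothing $\Omega_m$ near this edge (replacing the sharp level set by a smooth one transverse to $\pOm$) or by establishing $C^{1,\alpha}$ bounds only on compacta of $\Ov \backslash (K \cup \overline{\Gamma_m^D})$, which is sufficient because these edges escape as $m \to \infty$.  Once uniform local bounds on compact subsets of $\Ov \backslash K$ are in place, passage to the limit in both the equation and the Neumann condition is routine.
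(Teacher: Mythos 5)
Your proposal is correct and takes essentially the same approach as the paper: the paper's own proof of this theorem is the single line ``the proof is similar to the proof of Theorem \ref{geninteriorimprov},'' and your write-up supplies exactly the adaptations that reference implies --- the nonnegative boundary term $\int_{\pOm}\psi^2 E\phi\, A\nabla\phi\cdot\nu\,dS$ arising in part (i), and the mixed Dirichlet/Neumann exhaustion by the level sets $\{E<t_m\}$ (whose connectedness is precisely the extra hypothesis in part (ii)) together with the Harnack/regularity/diagonal limit. Your treatment is in fact more detailed than the paper's, in particular in flagging the corner where the Dirichlet and Neumann portions of $\partial\Omega_m$ meet.
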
 
   Note that one can go from (\ref{EQ1}) to (\ref{EQ2}) by using the change of variables $ \theta = \phi E^\frac{1}{2}$ in the case that $ A \nabla \phi \cdot \nu=0$ on $ \pOm$.

   \begin{proof} The proof is similar to the proof of theorem \ref{geninteriorimprov}. 
   
   \end{proof}

   \begin{remark} One can obtain an analogous  version of theorem \ref{equiv} for the case where $E$ is an interior weight on $ \Omega$ satisfying a Neumann boundary condition.

   \end{remark}

   \subsection{$H^1(\Omega)$ inequalities for exterior and annular domains}

   In this section we obtain optimal Hardy inequalities which are valid on exterior and annular domains.  Moreover these inequalities will be valid for functions $u$ which are nonzero on various portions of the boundary.   For simplicity we only consider the case where $A(x) $ is the identity matrix and hence $ \L_A = -\Delta$; the results immediately generalize to the case where $A(x)$ is not the identity matrix.     We first examine the exterior domain case.  \\ 
   
 \noindent  \textbf{Condition (Ext.):} \quad We suppose that $E>0$ in $ \IR^n$,  $ -\Delta E $ is a nonnegative nonzero finite measure (which we denote by $ \mu$) with compact support $K$ and we let $ \Omega $ denote a connected exterior domain in $ \IR^n$  with $ dist(K,\Omega)>0$.  In addition we assume that the compliment of $ \Omega$ denoted by $  \Omega^c$ is connected, $ \lim_{|x| \rightarrow \infty} E=0$ and $ \partial_\nu E \ge 0$ on $ \pOm$.   \\

   We will work in the following function space.  Let $ D^1(\Omega \cup \pOm) $ denote the completion of $ C_c^\infty(\Omega \cup \pOm)$ with respect to the norm $ \| \nabla u\|_{L^2(\Omega)}$.    Note we don't require $u$ to be zero on the boundary of $ \pOm$.   We then have the following theorem.  
   
 \begin{thm} \label{ext_theorem} Suppose $ E,\mu,K, \Omega$ are as in condition (Ext.).   Then \\
 \noindent (i) \quad For all $ u \in D^1(\Omega \cup \pOm)$ we have 
 \begin{equation} \label{exterior_domains}
 \int_\Omega | \nabla u|^2 dx \ge \frac{1}{4} \int_\Omega \frac{ | \nabla E|^2}{E^2} u^2 dx.
 \end{equation}  Moreover the constant is optimal and not attained.   \\
 (ii) \quad For all $ u \in D^1(\Omega \cup \pOm)$ we have 
 \begin{equation} \label{Ext0}
 \int_\Omega | \nabla u|^2dx \ge \frac{1}{4} \int_\Omega \frac{ | \nabla E|^2}{E^2} u^2 dx + \frac{1}{2} \int_{\pOm} \frac{u^2 \partial_\nu E}{E} dS(x).
 \end{equation}

 \end{thm}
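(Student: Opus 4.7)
\textbf{Proof plan for Theorem \ref{ext_theorem}.} The strategy is to establish an exact identity on $\Omega$ that decomposes the Hardy defect into a non-negative bulk term plus a boundary integral on $\pOm$, then verify optimality of $1/4$ by explicit test functions $E^t$ with $t\searrow 1/2$, and rule out attainment via the characterization of equality in that identity.

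Fix $u\in C_c^\infty(\Omega\cup\pOm)$ and set $v:=E^{-1/2}u$. Since $K\subset\Omega^c$ with $dist(K,\Omega)>0$, the function $E$ is smooth and strictly positive on a neighbourhood of $\Ov$, so $v$ is smooth and compactly supported on $\Ov$. The pointwise identity
\[
|\nabla u|^{2} \;=\; E\,|\nabla v|^{2} \;+\; \frac{|\nabla E|^{2}}{4E^{2}}\,u^{2} \;+\; \tfrac{1}{2}\nabla E\cdot\nabla(v^{2})
\]
holds in $\Omega$. Integrating over $\Omega$ and applying the divergence theorem to the cross term, using that $-\Delta E=0$ in $\Omega$ (because $\mathrm{supp}\,\mu\subset\Omega^c$) and that $u$ is compactly supported (so no contribution at infinity), produces
\[
\int_\Omega |\nabla u|^{2}dx \;-\; \frac{1}{4}\int_\Omega \frac{|\nabla E|^{2}}{E^{2}}u^{2}dx \;=\; \int_\Omega E|\nabla v|^{2}dx \;+\; \frac{1}{2}\int_{\pOm} \frac{u^{2}\,\d_\nu E}{E}\,dS,
\]
with $\nu$ the outward unit normal to $\Omega$. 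By the hypothesis $\d_\nu E\ge 0$ on $\pOm$, both right-hand summands are non-negative; keeping the boundary term gives (\ref{Ext0}), dropping it gives (\ref{exterior_domains}). Both inequalities extend to all of $D^1(\Omega\cup\pOm)$ by density of $C_c^\infty(\Omega\cup\pOm)$ together with Fatou's lemma applied to the non-negative integrands on each side.

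For optimality of $1/4$ in (i) I use the family $u_t:=E^t$ with $t>1/2$ and send $t\searrow 1/2$. Since $-\Delta E=\mu$ has compact support and $E\to 0$ at infinity, the Newtonian potential asymptotics give $E(x)\sim c|x|^{2-n}$ and $|\nabla E(x)|\sim c'|x|^{1-n}$, whence $|\nabla E^t|^{2}\in L^1(\Omega)$ precisely when $t>1/2$. Approximating by cutoffs $E^t\chi_R$, where $\chi_R$ is a smooth radial cutoff with $\chi_R\equiv 1$ on $B_R$ and vanishing outside $B_{2R}$, shows $E^t\in D^1(\Omega\cup\pOm)$ for each such $t$. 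A direct calculation then gives
\[
\frac{\int_\Omega |\nabla E^t|^{2}dx}{\int_\Omega (|\nabla E|^{2}/E^{2})\,E^{2t}\,dx} \;=\; t^{2} \;\searrow\; \tfrac{1}{4} \qquad (t\searrow \tfrac{1}{2}).
\]
For non-attainment, suppose some $u\in D^1(\Omega\cup\pOm)\setminus\{0\}$ achieves equality in (\ref{exterior_domains}). The identity above forces $\int_\Omega E|\nabla v|^{2}dx=0$, so $v$ is constant on $\Omega$ and $u=c\sqrt{E}$ for some $c\ne 0$. But $|\nabla E^{1/2}|^{2}=|\nabla E|^{2}/(4E)\sim |x|^{-n}$ at infinity, so $\int_\Omega|\nabla E^{1/2}|^{2}dx=\infty$ and $\sqrt{E}\notin D^1(\Omega\cup\pOm)$, a contradiction.

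The main obstacle is making the $D^1$-limit in (ii) rigorous, since elements of $D^1(\Omega\cup\pOm)$ need not possess a classical trace on $\pOm$; on $C_c^\infty(\Omega\cup\pOm)$ the integration by parts is elementary, and the extension relies on Fatou applied to the non-negative boundary measure $\d_\nu E\, dS/E$. The secondary analytic input, used repeatedly, is the decay of the Newtonian potential $E$ at infinity, which simultaneously controls $\int|\nabla E^t|^{2}$, places $E^t$ into $D^1$ only for $t>1/2$, and excludes $\sqrt{E}$ from $D^1$, thereby delivering both optimality and non-attainment.
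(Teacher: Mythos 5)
Your proposal is correct and follows essentially the same route as the paper: the same pointwise decomposition $|\nabla u|^2 - \frac{|\nabla E|^2}{4E^2}u^2 = E|\nabla v|^2 + v\nabla v\cdot\nabla E$ integrated by parts using $\Delta E=0$ in $\Omega$, the same test family $E^t$ with $t\searrow\frac12$ giving the ratio $t^2$, and the same non-attainment logic via $\sqrt{E}\notin D^1(\Omega\cup\pOm)$. The only (cosmetic) difference is that you justify $E^t\in D^1$ for $t>\frac12$ and $\sqrt{E}\notin D^1$ directly from the Newtonian-potential asymptotics $E\sim c|x|^{2-n}$, $|\nabla E|\sim c'|x|^{1-n}$, whereas the paper derives the identity $(2t-1)\int_\Omega E^{2t-2}|\nabla E|^2\,dx=\int_{\pOm}\partial_\nu E\,E^{2t-1}\,dS$ and invokes Hopf's lemma; both rest on the same decay of $E$ at infinity.
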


 \begin{proof}  Let $ u \in C_c^\infty(\Omega \cup \pOm)$ and set $ v:= E^\frac{-1}{2} u$.   Then as before we have 
 \begin{equation} \label{Ext1} | \nabla u |^2 - \frac{| \nabla E|^2 u^2}{4E^2} = E | \nabla v|^2 + v \nabla v \cdot \nabla E, \qquad \mbox{in $ \Omega$.}
 \end{equation}
   Integrating the last term by parts  gives 
 \[ \int_\Omega v \nabla v \cdot \nabla E dx = \frac{1}{2} \int_{\pOm} \frac{ u^2 \partial_\nu E }{E} dS(x).\]
 
 We obtain (\ref{Ext0}) by integrating (\ref{Ext1}) over $ \Omega$ and since $ \partial_\nu E \ge 0 $ on $ \pOm$ we obtain (\ref{exterior_domains}).   We now show the constant is optimal.   For big $ R$ we set $ \Omega_R:=\Omega \cap B_R$ where $ B_R$ is the ball centered at $0$ with radius $R$.  Let $ \frac{1}{2} <t<1$ and multiply $ -\Delta E = \mu$ by $ E^{2t-1}$ and integrate over $ \Omega_R$ to obtain 
 \[ (2t-1) \int_{\Omega_R} E^{2t-2} | \nabla  E|^2 dx = \int_{\pOm} \partial_\nu E E^{2t-1} dS(x) + \int_{\partial B_R} \partial_\nu E E^{2t-1} dS(x).\] Using a Newtonian potential argument one can show that as $ R \rightarrow \infty$ the surface integral over the ball $ B_R$ goes to zero.    So using this one sees that 
 \begin{equation} \label{Ext5} (2t-1) \int_\Omega E^{2t-2} | \nabla E|^2 dx = \int_{\pOm} \partial_\nu E E^{2t-1} dS(x),
 \end{equation}
  and so $ \int_\Omega | \nabla E^t|^2 dx <\infty$.  With this along with a standard cut-off function argument one sees that $ E^t \in D^1(\Omega \cup \pOm)$.  Now one uses $ E^t$ as $ t \searrow \frac{1}{2}$ as a minimizing sequence to show that $ \frac{1}{4}$ is optimal.  We now show the constant is not attained.  Now assume that $ x_0 \in \pOm$ is such that $ E(x_0) = \min_{\pOm} E$.   Then by Hopf's lemma $ \partial_\nu E(x_0) >0$ and so using this along with continuity and (\ref{Ext5}) one sees that $ E^\frac{1}{2} \notin D^1(\Omega \cup \pOm)$.    Now to finish the proof it will be sufficient to show that 
  \[ \int_\Omega E | \nabla v|^2 dx >0 \] for all nonzero $ u \in D^1(\Omega \cup \pOm)$.     The only nonzero $u$'s for which this integral is zero are multiples of $ E^\frac{1}{2}$ which are not in $D^1(\Omega \cup \pOm)$.

 \end{proof} 
 
 \begin{exam} Take $ \Omega $ a exterior domain in $ \IR^n$ where $ n \ge 3$, $ 0 \notin \overline{\Omega}$, and such that $ \nu(x) \cdot x \le 0$ on $ \pOm$ where $ \nu(x)$ is the outward pointing normal.  Define  $ E(x):=|x|^{2-n}$ and use theorem \ref{ext_theorem} to see that 
\begin{equation} \label{ext_exam}
 \int_\Omega | \nabla u|^2 dx \ge \left( \frac{n-2}{2} \right)^2 \int_\Omega \frac{u^2}{|x|^2} dx,
 \end{equation}
  for all $ u \in D^1(\Omega \cup \pOm)$.  Moreover the constant is optimal and not attained.   In fact using (ii) from the same theorem shows we can add the following nonnegative term to the right hand side of (\ref{ext_exam}):
 \[ \frac{(n-2)}{2} \int_{\pOm} \frac{u^2 ( - x \cdot \nu)}{|x|^2} dS(x).\]

 \end{exam}
 
 We now examine the annular domain case.   \\
 
 \noindent \textbf{Condition (Annul.):}  We assume that $ \Omega_1 \subset \subset \Omega_2$ are two bounded connected domains in $ \IR^n$ with smooth boundaries and $ \Omega:=\Omega_2 \backslash \overline{ \Omega_1}$ is connected.  In addition we assume that $ E>0 $ in $ \Omega_2$ with $ -\Delta E = \mu$ in $ \Omega_2$ where $ \mu$ is a nonnegative nonzero finite measure supported on $K \subset \Omega_1$. We also assume that $ \partial_\nu E \le 0 $ on $ \pOm_1$.
 
 We then have the following theorem. 
 
\begin{thm} Suppose $ \Omega, K,E$ are as in condition (Annul.).   Then \\
\noindent 
(i) \quad For all $ u \in H^1(\Omega)$ with $ u=0$ on $ \pOm_2$ we have
\begin{equation} \label{Ext10}
\int_\Omega | \nabla u|^2dx \ge \frac{1}{4} \int_\Omega \frac{| \nabla E|^2}{E^2}u^2 dx.
\end{equation} Moreover the constant is optimal and not attained if we assume that $ E=0$ on $ \pOm_2$. \\
(ii) \quad For all $ u \in H^1(\Omega)$ with $ u =0$ on $ \pOm_2$ we have 
\begin{equation} \label{Ext11}  \int_\Omega | \nabla u|^2dx \ge \frac{1}{4} \int_\Omega \frac{ | \nabla E|^2}{E^2} u^2 dx + \frac{1}{2} \int_{\pOm} \frac{u^2 \partial_\nu E}{E} dS(x).
\end{equation}

\end{thm}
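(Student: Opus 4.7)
The plan is to prove (ii) first and then deduce (i) from the sign hypothesis, after which optimality is obtained from a direct minimizing sequence $E^{t}$ with $t\searrow 1/2$.

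For (ii), I would begin with $u\in C^\infty(\Ov)$ vanishing on $\pOm_2$ (a dense subset of $\{u\in H^1(\Omega):u|_{\pOm_2}=0\}$) and set $v:=E^{-1/2}u$. Since $K\subset\subset\Omega_1$ is disjoint from $\overline\Omega$, $E\in C^\infty(\Ov)$ is strictly positive on compact subsets of $\Ov\setminus\pOm_2$, and moreover $\Delta E\equiv 0$ on $\Omega$ because $\mathrm{supp}(\mu)\subset\Omega_1$. The pointwise identity
\[
|\nabla u|^2-\frac{|\nabla E|^2}{4E^2}u^2 \;=\; E|\nabla v|^2+\tfrac12\nabla(v^2)\cdot\nabla E
\]
integrated over $\Omega$ and combined with integration by parts in the last term (with $\nu$ the outward normal to $\Omega$) gives
\[
\int_\Omega |\nabla u|^2 dx=\int_\Omega E|\nabla v|^2 dx+\frac{1}{4}\int_\Omega\frac{|\nabla E|^2}{E^2}u^2 dx-\frac{1}{2}\int_\Omega v^2\Delta E\,dx+\frac{1}{2}\int_{\pOm}v^2\partial_\nu E\,dS.
\]
The harmonicity of $E$ on $\Omega$ kills the volume boundary term; since $u=v=0$ on $\pOm_2$, the surface integral reduces to $\pOm_1$; rewriting $v^2=u^2/E$ and dropping the nonnegative $E|\nabla v|^2$ term produces (\ref{Ext11}). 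Density then extends the inequality to all admissible $u\in H^1(\Omega)$.

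For (i), the hypothesis $\partial_\nu E\le 0$ on $\pOm_1$ (for the outward normal to $\Omega_1$) is precisely $\partial_\nu E\ge 0$ on $\pOm_1$ with the outward-to-$\Omega$ normal used above, so the $\pOm_1$ integral in (\ref{Ext11}) is nonnegative and (\ref{Ext10}) follows immediately.

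For optimality of $1/4$ under the extra assumption $E=0$ on $\pOm_2$, I would take $u_t:=E^t$ for $t$ slightly larger than $1/2$. Clearly $u_t=0$ on $\pOm_2$. To show $u_t\in H^1(\Omega)$, multiply the identity $\Delta E=0$ on $\Omega$ by $E^{2t-1}$ (which vanishes on $\pOm_2$ when $t>1/2$) and integrate by parts to obtain
\[
(2t-1)\int_\Omega E^{2t-2}|\nabla E|^2 dx \;=\;\int_{\pOm_1} E^{2t-1}\,\partial_\nu E\,dS,
\]
a finite, nonnegative quantity that forces $I(t):=\int_\Omega E^{2t-2}|\nabla E|^2 dx$ to be finite for every $t>1/2$ and, by the vanishing prefactor, to blow up as $t\searrow 1/2$. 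The Rayleigh quotient is then $t^2 I(t)/I(t)=t^2\to 1/4$, giving the optimal constant. For non-attainment, equality in (\ref{Ext10}) together with the identity would force $\nabla v\equiv 0$ and the $\pOm_1$ boundary term to vanish, so $u=cE^{1/2}$; but sending $t\searrow 1/2$ in the displayed formula shows $E^{1/2}\notin H^1(\Omega)$, so $c=0$.

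The main technical obstacle is verifying the $H^1$-admissibility (and vanishing trace on $\pOm_2$) of the test family $E^t$ uniformly as $t\searrow 1/2$; this is handled by the above integration-by-parts identity, which uses both the harmonicity of $E$ on $\Omega$ and the boundary conditions on $\pOm_1$ and $\pOm_2$. Everything else is a direct adaptation of the exterior-domain argument in Theorem \ref{ext_theorem}.
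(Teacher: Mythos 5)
Your proposal is correct and follows essentially the same route as the paper: the decomposition $v=E^{-1/2}u$ with an integration by parts that localizes the boundary term to $\pOm_1$, the test family $E^t$ with $t\searrow \frac{1}{2}$ justified by the identity $(2t-1)\int_\Omega E^{2t-2}|\nabla E|^2\,dx=\int_{\pOm_1}E^{2t-1}\partial_\nu E\,dS$, and non-attainment via $E^{1/2}\notin H^1(\Omega)$. The only point worth making explicit is that the blow-up of $I(t)$ and the failure of $E^{1/2}\in H^1$ both rest on the boundary integral tending to $\mu(\Omega_1)>0$ as $t\searrow\frac{1}{2}$, which the paper states and you leave implicit.
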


   \begin{proof}  The proof of (\ref{Ext10}) and (\ref{Ext11}) is similar to the previous theorem so we omit the details.    We now show the constant is optimal.   Let $ H_0^1(\Omega \cup \pOm_1)$ denote $ \{ u \in H^1(\Omega): u = 0 \; \mbox{on} \; \pOm_2 \}$.  Again we multiply $ -\Delta E = \mu$ by $ E^{2t-1}$ for $ \frac{1}{2}< t <1$ and integrate over $ \Omega$ to obtain 
   \[ (2t-1) \int_\Omega E^{2t-2} | \nabla E|^2 dx = - \int_{\pOm_1} \partial_\nu E E^{2t-1} dS(x), \] which shows that $ E^t \in H_0^1(\Omega \cup \pOm_1)$.   From this one obtains 
   \[ \lim_{t \searrow \frac{1}{2} }(2t-1) \int_\Omega E^{2t-2} | \nabla E|^2 dx = \mu(\Omega_1)>0,\] which shows that $ E^\frac{1}{2} \notin H_0^1(\Omega \cup \pOm_1)$.    To see the constant is optimal one uses the same minimizing sequence as in the previous theorem.  To see the constant is not attained one uses the fact that $ E^\frac{1}{2} \notin H_0^1(\Omega \cup \pOm_1)$.

   \end{proof}

  \begin{remark} These inequalities have analogous weighted versions and using the methods developed earlier one easily obtains results concerning improvements.  We leave this for the reader to develop.

  \end{remark}

 \subsection{The non-quadratic case}

   For $ 1 < p \le n$ we define $ \L_{A,p}(E):= - div( | \nabla E|_A^{p-2} A \nabla E)$. As mentioned earlier 
    Adimurthi and Sekar [AS] obtained generalized Hardy inequalities of the form  
   \begin{equation} \label{lll} \int_\Omega | \nabla u|_A^pdx - \left( \frac{p-1}{p} \right)^p \int_\Omega \frac{| \nabla E|_A^p}{E} |u|^p dx\ge 0,
    \end{equation} where $ u \in W_0^{1,p}(\Omega)$.     There approach (as their title suggests) was to look at functions $ E $ which solve 
    \begin{eqnarray*}
    \L_{A,p}(E) &=& \delta_0 \qquad \mbox{ in $ \Omega$} \\
    E&=& 0 \qquad \mbox{ on $ \pOm$},
    \end{eqnarray*}
    where $ 0 \in \Omega$ and where  $\delta_0$ is again the Dirac mass at $0$.  \\ They posed the question (see [AS]) as to whether  $  (\frac{p-1}{p})^p$ is optimal in (\ref{lll})?    
   The next theorem shows this is the case (at least for $1 <p<n$); infact we show the result for a more general case.

   \subsubsection*{Interior case}

  Suppose $ \mu$ is a  nonnegative nonzero finite measure supported on $K \subset \Omega $, $ dim_{box}(K) < n-p$ (and hence $C_c^{0,1}(\Omega \backslash K)$ is dense in $W_0^{1,p}(\Omega)$)   and $ 0 < E $ is a solution of 
  \begin{equation} \label{peq}
  \L_{A,p}(E) = \mu \qquad \mbox{ in $ \Omega$}.
  \end{equation}  
  By regularity theory (see [D], [T]) there is some  $ 0 < \sigma <1 $ such that $ E \in C^{1,\sigma}(\Omega \backslash K)$ and by the maximum principle (see [V]) $ E >0 $ in $ \Omega \backslash K$.   Now if we assume that $ \mu = \delta_0$, as was the case in the question posed in [AS], then one can show $E(0)=\infty$.

  \begin{thm}  \label{non-linear-hardy} Suppose $E$ is as above but we don't assume that $ E=\infty$ on $K$.  \\
  (i) \quad Then 
  \begin{equation} \label{bb}
   \int_\Omega | \nabla u|_A^2dx \ge \left( \frac{ p-1}{p} \right)^p \int_\Omega \frac{ | \nabla E|_A^p}{E^p} | u |^p dx, 
  \end{equation} for all $ u \in W_0^{1,p}(\Omega)$.   \\ 
  (ii) \quad Suppose $ E = \infty$ on $K$ and $ E= \gamma $ on $ \pOm$ where $ \gamma $ is a non-negative constant.   Then the constant in (\ref{bb}) is optimal.

  \end{thm}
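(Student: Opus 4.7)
The plan is to adapt the quadratic-case argument of Lemma \ref{startingpoint} and Theorem \ref{first} to the $p$-Laplacian setting, replacing the Hilbert-space identity with a convexity inequality. Writing $\alpha := (p-1)/p$ and reducing to $u \ge 0$ via $|\nabla |u||_A = |\nabla u|_A$ a.e., I would set $v := u E^{-\alpha}$, so that $\nabla u = \alpha E^{\alpha-1} v \nabla E + E^\alpha \nabla v$. The convexity of $\xi \mapsto |\xi|_A^p$ gives $|\xi + \eta|_A^p \ge |\xi|_A^p + p |\xi|_A^{p-2} (A\xi) \cdot \eta$, and applying this with $\xi = \alpha E^{\alpha-1} v \nabla E$ and $\eta = E^\alpha \nabla v$, together with the identity $p v^{p-1} \nabla v = \nabla(v^p)$, yields after simplification the pointwise bound
\[
|\nabla u|_A^p \ge \alpha^p \frac{|\nabla E|_A^p}{E^p}\,u^p + \alpha^{p-1} |\nabla E|_A^{p-2} (A \nabla E) \cdot \nabla(v^p).
\]

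For part (i), I would integrate this over $\Omega$ for $u \in C_c^{0,1}(\Omega \setminus K)$: the cross term integrates by parts against $\L_{A,p}(E) = \mu$ to give $\alpha^{p-1}\int_\Omega v^p\,d\mu = 0$, since $v^p$ is supported away from $K$, where $\mu$ lives. The resulting inequality $\int_\Omega |\nabla u|_A^p\,dx \ge \alpha^p \int_\Omega |\nabla E|_A^p\,u^p/E^p\,dx$ then extends to all $u \in W_0^{1,p}(\Omega)$ by the density of $C_c^{0,1}(\Omega \setminus K)$ (Remark \ref{remark}) combined with Fatou's lemma on the right-hand side.

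For part (ii), assume first that $E = \gamma$ on $\pOm$ with $\gamma > 0$ (the case $\gamma = 0$ reduces to this by the shift $E \mapsto E + \E$ as in the proof of Theorem \ref{first}(i)). Take $u_t := E^t - \gamma^t \in W_0^{1,p}(\Omega)$ as a minimizing sequence for $0 < t < \alpha$. Testing $\L_{A,p}(E) = \mu$ against $\phi_s := E^s - \gamma^s$ with $s := (t-1)p + 1 < 0$---which vanishes on $\pOm$ and, since $E = \infty$ on $K$ with $s < 0$, on $K$ as well---produces the identity
\[
(p-1-tp) \int_\Omega |\nabla E|_A^p\, E^{(t-1)p}\,dx = \gamma^{s}\,\mu(\Omega),
\]
showing both that $u_t \in W_0^{1,p}(\Omega)$ and that $I(t) := \int_\Omega |\nabla E|_A^p E^{(t-1)p}\,dx \nearrow \infty$ as $t \nearrow \alpha$. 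A binomial expansion of $(E^t - \gamma^t)^p = E^{tp}(1 - \gamma^t E^{-t})^p$ shows that $\int_\Omega u_t^p |\nabla E|_A^p E^{-p}\,dx = I(t) + O(1)$, so the Rayleigh quotient satisfies
\[
\frac{\int_\Omega |\nabla u_t|_A^p\,dx}{\int_\Omega u_t^p\, |\nabla E|_A^p/E^p\,dx} = \frac{t^p\, I(t)}{I(t) + O(1)} \longrightarrow \alpha^p = \left(\frac{p-1}{p}\right)^p
\]
as $t \nearrow \alpha$, proving optimality.

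The main technical obstacle will be justifying the test-function identity in part (ii), since $\phi_s$ is not manifestly an admissible test function: $E^s$ blows up near $\pOm$ when $\gamma$ is small and the pairing with $\mu$ across $K$ must be controlled. The standard remedy, mirroring the regularization scheme in Lemma \ref{startingpoint}(ii), is to approximate $E$ by $E + \E$ (or regularize $\mu$), carry out the identity for the smooth approximants where integration by parts is classical, and then pass to the limit using monotone convergence (since $s < 0$ and $E \ge \gamma$ by the strong maximum principle). By contrast, the integration by parts in part (i) is routine for $u \in C_c^{0,1}(\Omega \setminus K)$ because of the $C^{1,\sigma}$ regularity of $E$ on $\Omega \setminus K$ and the positive lower bound on $E$ on compact subsets of $\Omega \setminus K$.
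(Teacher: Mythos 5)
Your argument is correct, but part (i) takes a genuinely different route from the paper, while part (ii) is essentially the paper's own proof. For (i), the paper dots the identity $\nabla E^{1-p}=(1-p)E^{-p}\nabla E$ with the vector field $|\nabla E|_A^{p-2}A\nabla E\,|u|^p$, integrates by parts against $\L_{A,p}(E)=\mu$ (the measure term vanishing because $u$ is supported away from $K$), and then applies the Cauchy--Schwarz inequality for the inner product induced by $A$ followed by H\"older with exponents $p$ and $p'$; dividing through gives (\ref{bb}). You instead make the ground-state substitution $v=uE^{-(p-1)/p}$ and use convexity of $\xi\mapsto|\xi|_A^p$; this relies on the same integration by parts but replaces the Cauchy--Schwarz/H\"older step by a pointwise Picone-type inequality, which is slightly more work to set up but leaves an explicit nonnegative remainder (potentially useful for non-attainment and for improvements, in the spirit of the $p=2$ arguments earlier in the paper). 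For (ii) you use the same identity $(p-1-tp)\int_\Omega|\nabla E|_A^pE^{tp-p}\,dx=\gamma^{tp-p+1}\mu(\Omega)$, the same minimizing family $u_t=E^t-\gamma^t$, the same binomial-series control of the numerator, and the same reduction of $\gamma=0$ to $\gamma>0$ via $E\mapsto E+\E$. One small slip there: your test function $\phi_s=E^s-\gamma^s$ does \emph{not} vanish on $K$ --- since $E=\infty$ and $s<0$ it equals $-\gamma^s$ there --- and it is exactly this nonzero value that produces the term $\gamma^s\mu(\Omega)$ on the right of your identity; if $\phi_s$ really vanished on the support of $\mu$ the right-hand side would be $0$ and would force $I(t)=0$. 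The identity you wrote down is nevertheless the correct one, so the argument goes through.
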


\begin{proof}  (i) \quad Let $ u \in C_c^{0,1}(\Omega \backslash K)$.  Then $ \nabla E^{1-p} = (1-p) E^{-p} \nabla E $ and dotting both sides with $ | \nabla E|_A^{p-2} A \nabla E | u|^p $ and integrating over $ \Omega $ gives
  \begin{eqnarray*}
  (1-p) \int_\Omega \frac{| \nabla E|_A^p}{E^p} | u|^pdx & = & \int_\Omega \nabla E^{1-p} \cdot \left( | \nabla E|_A^{p-2} A \nabla E |u|^p \right) dx\\ 
  & =& \int_\Omega E^{1-p} |u|^p d \mu \\
  && - \int_\Omega E^{1-p} | \nabla E|_A^{p-2} A \nabla E \cdot p |u|^{p-2} u \nabla udx \\
  &=&- \int_\Omega E^{1-p} | \nabla E|_A^{p-2} A \nabla E \cdot p |u|^{p-2} u \nabla u dx,
  \end{eqnarray*} where we used the divergence theorem and also the fact that $u=0$ on $K$.   Now using the Cauchy-Schwarz inequality on the inner product induced by $ A(x)$ we see that 
  \[ \frac{p-1}{p} \int_\Omega \frac{ | \nabla E|_A^p}{E^p} | u|^pdx \le \int_\Omega \frac{ | \nabla E|_A^{p-1} |u|^{p-1}}{E^{p-1}} | \nabla u|_A dx, \] and we now apply Holder's inequality on the right after recalling $ (p-1)p'=p$ where $ p'$ is the conjugate of $p$.   Now use density to extend to all of $ W_0^{1,p}(\Omega)$.  \\
  (ii) \quad  We first consider the case $ \gamma >0$.  We begin by showing that  $ u_t:=E^t - \gamma^t \in W^{1.p}_0(\Omega)$ for $ 0 < t < \frac{p-1}{p}$.   Fix $  0 < t < \frac{p}{p-1}$ and multiply (\ref{peq}) by $E^{tp-p+1}$ and integrate over $ \Omega $ to get 
  \begin{eqnarray*}
  0 = \int_\Omega E^{tp-p+1} d \mu &=& (tp-p+1) \int_\Omega | \nabla E|_A^2 E^{tp-p}dx - \gamma^{tp-p+1} \int_{\pOm} | \nabla E|_A^{p-2} A \nabla E \cdot \nu d \mathcal{H}^{n-1} \\
 &=& (tp-p+1) \int_\Omega | \nabla E|_A^2 E^{tp-p}dx - \gamma^{tp-p+1} \int_\Omega div( | \nabla E|_A^{p-2} A \nabla E)dx \\
 &=& (tp-p+1) \int_\Omega | \nabla E|_A^2 E^{tp-p}dx + \gamma^{tp-p+1} \mu(\Omega),
 \end{eqnarray*} where the first integral is zero since $ E = \infty$ on $K $ and $ tp-p+1 <0$.    Re-arranging this we arrive at 
 \[ \int_\Omega | \nabla E^t|_A^pdx= \frac{ \mu(\Omega) \gamma^{tp-p+1} t^p}{p-tp-1}, \] from which we see that $ E^t \in W^{1,p}(\Omega)$ for $ 0 <t < \frac{p-1}{p}$ and we also see that  
 \[ \lim_{t \nearrow \frac{p-1}{p}} \int_\Omega | \nabla E|_A^p E^{tp-p} dx = \infty.\]  Put $ t $ as above and set $ u_t:=E^t-\gamma^t \in W_0^{1,p}(\Omega)$.  By the binomial theorem we have 
 \[ (1+x)^p = \sum_{m=0}^\infty ( p,m) x^m, \] for all $ |x| \le 1$ where  $(p,m)$ are the binomial coefficients.    One should note that $ (p,m)$ is eventually alternating and since we have convergence at $ x=-1$ we see  that $ \sum_m (p,m) (-1)^m$  converges; which shows that $ \sum_m | (p,m) | < \infty$.   Now we have
 \begin{eqnarray*}
 | u_t|^p &=& E^{tp} \big| 1 - \frac{\gamma^t}{E^t} \big| \\
 &=& E^{tp} \sum_{m=0}^\infty (p,m) \frac{ (-1)^m \gamma^{tm}}{E^{tm}},
 \end{eqnarray*}    and we define 
 \[ Q_t:= \frac{ \int_\Omega \frac{| \nabla E|_A^p}{E^p} |u_t|^pdx }{   \int_\Omega | \nabla u_t|_A^pdx}. \] So
 
 \[ Q_t - \frac{1}{t^p} = \frac{ \int_\Omega | \nabla E|_A^p E^{tp-p} \left( \sum_{m=1}^\infty (p,m) (-1)^m \frac{\gamma^{tm}}{{E^{tm}}} \right) dx  }{ t^p \int_\Omega | \nabla E|_A^p E^{tp-p} dx }, \] and so 
 \begin{eqnarray*}
 \big| Q_t - \frac{1}{t^p} \big| & \le & \frac{1}{t^p} \sum_{m=1}^\infty \big| (p,m) \big| \frac{ \int_\Omega | \nabla E|_A^p E^{tp-p} \gamma^{tm} E^{-tm} dx }{   \int_\Omega | \nabla E|_A^p E^{tp-p} dx } \\
 & = &  \frac{1}{t^p} \sum_{m=1}^\infty \big| (p,m) \big| \frac{ p-tp-1}{p-tp-1+tm} \\
 & \le & \frac{ p -tp-1}{t^{p+1}} \sum_{m=1}^\infty \frac{ \big| (p,m) \big|}{m} \\
 &=:& \frac{ p -tp-1}{t^{p+1}} C_p,
 \end{eqnarray*}  and so we see that 
 \[ \lim_{t \nearrow \frac{p-1}{p}} \big| Q_t - \frac{1}{t^p} \big| =0, \] which shows the constant in (\ref{bb}) is optimal.  \\ Now we handle the case $ \gamma=0$.  Let $ \L_{A,p}(E) = \mu$ in $ \Omega $ and $ E=0$ on $ \pOm$ and define $ E_\E:=\E + E $ where $ \E>0$.   Then $ \L_{A,p}(E_\E)= \mu$ in $ \Omega $ and $ E_\E = \E $ on $ \pOm$.  For $ u \in C_c^\infty(\Omega)$ non-zero we have, after some simple algebra,
 \[ \frac{ \int_\Omega | \nabla u|_A^pdx}{ \int_\Omega \frac{| \nabla E|_A^p}{E^p} |u|^pdx} \le  \frac{ \int_\Omega | \nabla u|_A^pdx}{ \int_\Omega \frac{| \nabla E_\E|_A^p}{E_\E^p} |u|^pdx}, \] which shows the constant is optimal in the case of $ \gamma =0$.   

\end{proof}

  \subsubsection*{Boundary case}
  
  Analogously to the quadratic case we will be interested in the validity of (\ref{bb}) when $E$ is a solution to 
  \begin{eqnarray*}
  \L_{A,p}(E) &=& \mu \qquad \mbox{ in $ \Omega$,} \\
  E &=& 0 \qquad \mbox{ on $ \pOm$}
  \end{eqnarray*} where $ \mu$ is a nonnegative nonzero finite measure and where we impose some added regularity restrictions to $E$ or $ \mu$.   Recall in the quadratic case we  added the condition that $ E \in H_0^1(\Omega)$.    For simplicity we will assume that $ \mu$ is smooth; say $ d \mu = f dx $ where $ 0 \le f \in C^\infty(\Ov)$ is non-zero.   One can show that $ E \in C^{1,\sigma}(\Ov)$ for some $ 0 < \sigma <1$. 
 
 \begin{thm} \label{non-lin-bound} Suppose $ E$ is a positive solution to $ \L_{A,p}(E)=\mu $ in $ \Omega $ where $ \mu$ is as above. \\
 (i) \qquad  Then 
 \begin{equation} \label{p-measure}
 \left( \frac{p-1}{p} \right)^p \int_\Omega \frac{ | \nabla E|_A^p}{E^p} |u|^pdx + \left( \frac{p-1}{p} \right)^{p-1} \int_\Omega \frac{|u|^p}{E^{p-1}} d \mu \le \int_\Omega | \nabla u|_A^pdx, 
 \end{equation}
 for all $ u \in W_0^{1,p}(\Omega)$.  Since $ \mu$ is a measure we have 
 \begin{equation} \label{p-nomeas}
 \left( \frac{p-1}{p} \right)^p \int_\Omega \frac{ | \nabla E|_A^p}{E^p} |u|^pdx \le \int_\Omega | \nabla u|_A^pdx,
 \end{equation}
 for all $ u \in W_0^{1,p}(\Omega)$. \\
 (ii) \quad Suppose $ E =0 $ on $ \pOm$.  Then (\ref{p-nomeas}) is optimal.  \\
 (iii) \quad Suppose $ E=0$ on $ \pOm$.   If one fixes the optimal constant from part (ii) then the other constant is also optimal in (\ref{p-measure}) ie. 
 \[ \inf \left\{ \frac{ \int_\Omega | \nabla u|_A^pdx  - \left( \frac{p-1}{p} \right)^p \int_\Omega \frac{ | \nabla E|_A^p}{E^p} |u|^pdx}{ \int_\Omega \frac{|u|^p}{E^{p-1}} d \mu } :  u \in W_0^{1,p}(\Omega), u \neq 0 \right\} = \left( \frac{p-1}{p} \right)^{p-1}. \]
 
  \end{thm}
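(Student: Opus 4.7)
The plan for (i) is to repeat the divergence-identity computation underlying Theorem \ref{non-linear-hardy}(i), but without sacrificing the measure term (which vanished in the interior case only because test functions were supported away from $K$). For $u \in C_c^\infty(\Omega)$, assume $u \ge 0$ without loss of generality (using $|\nabla |u||_A = |\nabla u|_A$ a.e.); pairing $E^{1-p} u^p$ with the equation $\L_{A,p}(E) = \mu$ and integrating by parts, the boundary contribution vanishing by compact support of $u$, yields
\begin{equation*}
(p-1) \int_\Omega \frac{|\nabla E|_A^p}{E^p} u^p\, dx \,+\, \int_\Omega \frac{u^p}{E^{p-1}}\, d\mu \;=\; p \int_\Omega \frac{u^{p-1}}{E^{p-1}}\, |\nabla E|_A^{p-2}\, A \nabla E \cdot \nabla u\, dx.
\end{equation*}
Cauchy--Schwarz in the $A$-inner product followed by H\"older with conjugate exponents $p/(p-1)$ and $p$ bounds the right-hand side by $p\, \mathcal{A}^{(p-1)/p}\, \mathcal{C}^{1/p}$, where $\mathcal{A}$ denotes the Hardy integral on the left and $\mathcal{C} := \int_\Omega |\nabla u|_A^p\, dx$.

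Writing also $\mathcal{B}$ for the measure term, the resulting multiplicative inequality reads $(p-1)\mathcal{A} + \mathcal{B} \le p\, \mathcal{A}^{(p-1)/p}\, \mathcal{C}^{1/p}$. Solving for $\mathcal{C}$ and then invoking the elementary convexity bound $(x+y)^p \ge x^p + p\, x^{p-1} y$, valid for $x,y\ge 0$ and $p>1$ (immediate from $(x+y)^p - x^p = p\int_0^y (x+s)^{p-1}\, ds$), applied with $x = (p-1)\mathcal{A}$ and $y = \mathcal{B}$, cleanly separates out the two required constants $\left(\tfrac{p-1}{p}\right)^p$ and $\left(\tfrac{p-1}{p}\right)^{p-1}$. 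This establishes (\ref{p-measure}) on $C_c^\infty(\Omega)$, and a density argument together with Fatou's lemma on each of the two lower-semicontinuous integrals on the left extends it to all of $W_0^{1,p}(\Omega)$.

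For (ii) and (iii) I would use the one-parameter test family $u_t := E^t$ with $t > (p-1)/p$. Testing $\L_{A,p}(E) = f\, dx$ against $E^{tp-p+1}$ and integrating by parts, the boundary term vanishing since $tp-p+1 > 0$ and $E|_{\partial\Omega} = 0$, produces the identity
\begin{equation*}
(tp - p + 1) \int_\Omega |\nabla E|_A^p E^{tp-p}\, dx \;=\; \int_\Omega E^{tp - p + 1}\, f\, dx,
\end{equation*}
whose right side is positive and finite; this together with $E \in C^{1,\sigma}(\overline\Omega)$ places $u_t$ in $W_0^{1,p}(\Omega)$. The Rayleigh quotient for (\ref{p-nomeas}) evaluated at $u_t$ equals exactly $t^p$, so letting $t \searrow (p-1)/p$ yields (ii). For (iii) the same identity reduces the quotient in the displayed infimum to
\begin{equation*}
\frac{ t^p - \left(\tfrac{p-1}{p}\right)^p }{ tp - p + 1 },
\end{equation*}
which is a $0/0$ form at $t = (p-1)/p$ and whose limit by L'H\^opital's rule is exactly $\left(\tfrac{p-1}{p}\right)^{p-1}$, matching the lower bound already proved in (i).

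The principal obstacle is the algebraic/convexity step in (i): the additive two-constant inequality does not drop out of H\"older alone, and one needs the sharp elementary bound $(x+y)^p \ge x^p + p x^{p-1}y$ to distribute $\left(\tfrac{p-1}{p}\right)^p$ onto the Hardy term and $\left(\tfrac{p-1}{p}\right)^{p-1}$ onto the measure term. Any weaker splitting would yield suboptimal constants incompatible with the sharpness claimed in (iii). The only other subtlety, the integration by parts in the presence of the boundary vanishing of $E$ and the singular factor $E^{1-p}$, is routine because $u$ has compact support in $\Omega$ and $E$ is bounded below on $\mathrm{supp}(u)$.
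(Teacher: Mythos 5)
Your proposal is correct and follows essentially the same route as the paper: the same divergence identity obtained by pairing $E^{1-p}|u|^p$ with the equation, the same Cauchy--Schwarz/H\"older bound $(p-1)\mathcal{A}+\mathcal{B}\le p\,\mathcal{A}^{(p-1)/p}\,\mathcal{C}^{1/p}$, and the same test family $E^t$ with $t\searrow \frac{p-1}{p}$ combined with the integrated identity $(tp-p+1)\int_\Omega |\nabla E|_A^p E^{tp-p}\,dx=\int_\Omega E^{tp-p+1}\,d\mu$ for parts (ii) and (iii). The only difference is cosmetic: where you raise to the $p$-th power and invoke $(x+y)^p\ge x^p+px^{p-1}y$, the paper instead applies Young's inequality $\mathcal{A}^{1/q}\mathcal{C}^{1/p}\le t\mathcal{A}+C(t)\mathcal{C}$ and optimizes the parameter $t$; the two devices produce identical constants.
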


    \begin{proof}
 (i) \quad Suppose $ E $ is a positive solution to $ \L_{A,p}(E) = \mu $ in $ \Omega$ and let $ u \in C_c^\infty(\Omega)$.  From the proof of theorem \ref{non-linear-hardy} we have 
  \begin{eqnarray*}
  (p-1) \int_\Omega \frac{ | \nabla E|_A^p}{E^p}|u|^pdx + \int_\Omega \frac{|u|^p}{E^{p-1}} d \mu & = & p \int_\Omega \frac{ | \nabla E|_A^{p-2}}{E^{p-1}} A \nabla E \cdot \nabla u |u|^{p-2} u dx \\
  & \le & p \left( \int_\Omega \frac{| \nabla E|_A^p}{E^p} |u|^p dx \right)^\frac{1}{p'} \left( \int_\Omega | \nabla u|_A^p dx \right)^\frac{1}{p}.  
  \end{eqnarray*}   Now let $ q $ denote $ p'$ and 
  \[ B:= \int_\Omega \frac{ | \nabla E|_A^p}{E^p}|u|^pdx, \qquad C:=\int_\Omega \frac{|u|^p}{E^{p-1}} d \mu, \qquad D:= \int_\Omega | \nabla u|_A^p dx.\]   Using Young's inequality with $ t >0 $ we arrive at 
 \[
  \frac{(p-1)}{p} B + \frac{C}{p} \le  B^\frac{1}{q} D^\frac{1}{p} \le   t B + C(t) D,  \] 
  where 
  \[ C(t):= p^{-1} q^\frac{-p}{q} t^\frac{-p}{q}, \] and so 
   \[ \frac{1}{C(t)} \left( \frac{p-1}{p} -t \right) B + \frac{1}{pC(t)} C \le D, \] for all $ t >0$.   Picking $ t=q^2$ gives the desired result.  \\
   (ii) \quad Let $  t> \frac{p-1}{p}$, multiply $ \L_{A,p}(E)= \mu$ by $ E^{tp-p+1}$ and integrate over $ \Omega $ to obtain
  \begin{equation} \label{must}
   \int_\Omega E^{tp-p+1} d \mu = ( tp-p+1) \int_\Omega  | \nabla E|_A^p E^{tp-p}dx, 
   \end{equation} which shows that $ E^t \in W_0^{1,p}(\Omega)$ for $ p > \frac{p-1}{p}$.  If one uses as a minimizing sequence $ u_t:=E^t$ and sends $ t \searrow \frac{p-1}{p}$ they immediately see  that (\ref{p-nomeas}) is optimal. \\
  (iii)  Again one uses $ u_t:=E^t$ and sends $ t \searrow \frac{p}{p-1}$.  The result is immediate after using (\ref{must}). 
   
\end{proof}

 An important example is when $A(x)$ is the identity matrix and $ E(x)=\delta(x):=dist(x,\pOm)$ so $ | \nabla \delta|=1$ a.e.. Then $ \L_{A,p}(\delta)= -div( | \nabla \delta |^{p-2} \delta ) = - \Delta \delta =:\mu$ which is non-negative if we further assume that $ \Omega $ is convex.   In this case we have the $L^p$ analog of (\ref{bound-HS}): 
 \begin{cor} Suppose $ \Omega $ is convex and $\delta(x):=dist(x,\pOm)$.   Then for $ 1<p<\infty$ and $ u \in W_0^{1,p}(\Omega)$ we have  
 \[ \int_\Omega | \nabla u|^p dx \ge  \left( \frac{p-1}{p} \right)^p \int_\Omega \frac{|u|^p}{\delta^p}dx,\] 
 \[ \int_\Omega | \nabla u|^pdx- \left( \frac{p-1}{p} \right)^p \int_\Omega \frac{|u|^p}{\delta^p}dx \ge \left( \frac{p-1}{p} \right)^{p-1} \int_\Omega \frac{ |u|^p}{\delta^{p-1}} d \mu, \] where $ d \mu:= -\Delta \delta dx$.  Moreover all constants are optimal.

 \end{cor}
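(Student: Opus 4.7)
The plan is to apply Theorem \ref{non-lin-bound} with $E := \delta$ and $A := I$. Since $|\nabla\delta| = 1$ a.e.\ in $\Omega$, one has $\L_{A,p}(\delta) = -div\bigl(|\nabla\delta|^{p-2}\nabla\delta\bigr) = -\Delta\delta$, and convexity of $\Omega$ makes $\delta$ concave, so $\mu := -\Delta\delta$ is a nonnegative measure. Integration by parts against $1$ gives $\mu(\Omega) = -\int_\Omega \Delta\delta\, dx = |\pOm|$, so $\mu$ is finite and nonzero. Also $\delta = 0$ on $\pOm$. Substituting $E = \delta$ into (\ref{p-nomeas}) and (\ref{p-measure}) immediately yields the two stated inequalities (the factor $|\nabla\delta|^p$ collapses to $1$), and optimality of both constants follows from parts (ii) and (iii) of Theorem \ref{non-lin-bound}, which apply precisely because $\delta = 0$ on $\pOm$.

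The main technical point is that Theorem \ref{non-lin-bound} was stated under the working assumption $d\mu = f\, dx$ with $f \in C^\infty(\Ov)$ nonnegative and nonzero, whereas here $\mu$ is only a nonnegative finite Radon measure (typically supported on the ridge of $\Omega$), and $\delta$ is merely Lipschitz with $\nabla\delta$ defined only a.e. To bridge this gap I would revisit the proof of part (i): the sole step that uses smoothness of $\mu$ is the divergence-theorem identity
\[
(1-p)\int_\Omega \frac{|\nabla E|_A^p}{E^p}|u|^p\, dx = \int_\Omega E^{1-p}|u|^p\, d\mu - p\int_\Omega E^{1-p}|\nabla E|_A^{p-2} A\nabla E \cdot |u|^{p-2} u\,\nabla u\, dx,
\]
tested against $\delta^{1-p}|u|^p$. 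For $u \in C_c^\infty(\Omega)$ this is a bounded continuous function on $\Ov$ (since $\text{supp}(u) \subset\subset \Omega$ keeps $\delta$ uniformly away from $0$), so the pairing with the finite Radon measure $\mu$ is perfectly well-defined, and the identity holds in the distributional sense. The subsequent H\"older and Young manipulations are purely algebraic and unchanged. Density of $C_c^\infty(\Omega)$ in $W_0^{1,p}(\Omega)$ together with Fatou's lemma then extend both inequalities from smooth to general $u$.

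For optimality, the minimizing sequences from the proof of Theorem \ref{non-lin-bound}(ii)--(iii) are $u_t := \delta^t$ with $t \searrow (p-1)/p$, and the key identity (\ref{must}) applied with $E = \delta$ gives $\int_\Omega \delta^{tp-p+1}\, d\mu = (tp-p+1)\int_\Omega \delta^{tp-p}\, dx$, whose right-hand side blows up as $t \searrow (p-1)/p$. This only requires $\mu$ finite, nonnegative, nonzero and $\delta = 0$ on $\pOm$, all of which we have, so the optimality arguments transfer without modification. If the direct distributional route above is deemed unsatisfying, a fallback is to approximate by smoothly bounded convex $\Omega_k \nearrow \Omega$ with smooth distance functions (via mollification preserving concavity on an exhaustion), apply Theorem \ref{non-lin-bound} on each $\Omega_k$, and pass to the limit using dominated convergence on bulk integrals and weak-$*$ convergence on the measure term.
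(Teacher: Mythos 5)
Your proposal is correct and follows essentially the same route as the paper, which obtains the corollary by substituting $E=\delta$, $A=I$ into Theorem \ref{non-lin-bound} after noting that $|\nabla\delta|=1$ a.e.\ and that convexity makes $\mu:=-\Delta\delta$ a nonnegative measure. Your additional care in justifying the divergence-theorem identity when $\mu$ is a singular measure (rather than $f\,dx$ with $f$ smooth, as in the theorem's stated hypotheses) addresses a point the paper passes over silently, and your argument for it is sound.
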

 One can view the second inequality as an improvement of the first.

\end{document}